\documentclass{amsart}

\usepackage{amssymb}
\usepackage[all]{xy}
\usepackage{epsfig} 
\usepackage{tabularx}

\setcounter{tocdepth}{1}
\newcolumntype{C}{>{\centering\arraybackslash}X}

\def\comment#1{{\sf{[#1]}}}


\def\Z{{\mathbb Z}}
\def\Q{{\mathbb Q}}
\def\R{{\mathbb R}}
\def\C{{\mathbb C}}
\def\P{{\mathbb P}}
\def\F{{\mathbb F}}

\def\V{{\mathbb V}}
\def\W{{\mathbb W}}


\def\A{{\mathcal A}}
\def\cC{{\mathcal C}}
\def\D{{\mathcal D}}
\def\cH{{\mathcal H}}

\def\M{{\mathcal M}}
\def\cO{{\mathcal O}}
\def\X{{\mathcal X}}


\def\G{\Gamma}


\def\g{{\mathfrak g}}
\def\h{{\mathfrak h}}
\def\m{{\mathfrak m}}





\def\Dtilde{\widetilde{D}}
\def\Etilde{\widetilde{E}}

\def\Dbar{\overline{D}}
\def\Mbar{\overline{M}}

\def\Ybar{\overline{Y}}
\def\Zbar{\overline{Z}}

\def\hbar{\overline{h}}

\def\Ctilde{\widetilde{C}}

\def\Zhat{\widehat{\Z}}

\def\Xo{\overset{\circ}{X}}

\def\fbar{{\overline{f}}}
\def\ftilde{\tilde{f}}
\def\xtilde{\tilde{x}}



\def\Qp{{\Q_p}}


\def\Sp{{\mathrm{Sp}}}
\def\SL{{\mathrm{SL}}}
\def\GL{{\mathrm{GL}}}
\def\SO{{\mathrm{SO}}}
\def\SU{{\mathrm{SU}}}
\def\PSL{{\mathrm{PSL}}}




\def\hodge{\mathrm{Hodge}}
\def\red{\mathrm{red}}
\def\sing{\mathrm{sing}}
\def\div{\mathrm{div}}


\def\dot{\bullet}
\def\bs{\backslash}




\newcommand\id{\operatorname{id}} 
\newcommand\im{\operatorname{im}} 
\newcommand\Hom{\operatorname{Hom}}
\newcommand\Spec{\operatorname{Spec}}

\newcommand\Aut{\operatorname{Aut}}
\newcommand\Gr{\operatorname{Gr}}
\newcommand\Jac{\operatorname{Jac}}
\newcommand\Pic{\operatorname{Pic}}

\newcommand\Ext{\operatorname{Ext}}
\newcommand\Bl{\operatorname{Bl}}

\newcommand\rank{\operatorname{rank}}
\newcommand\trace{\operatorname{trace}}
\newcommand\codim{\operatorname{codim}}

\renewcommand\Im{\operatorname{Im}}


\newtheorem{theorem}{Theorem}[section]
\newtheorem{lemma}[theorem]{Lemma}
\newtheorem{proposition}[theorem]{Proposition}
\newtheorem{corollary}[theorem]{Corollary}
\newtheorem{bigtheorem}{Theorem}
\newtheorem{bigcorollary}[bigtheorem]{Corollary}

\theoremstyle{definition}
\newtheorem{definition}[theorem]{Definition}
\newtheorem{example}[theorem]{Example}

\theoremstyle{remark}


\begin{document}

\title{Monodromy of Codimension-One Sub-Families of Universal Curves}

\author{Richard Hain}
\address{Department of Mathematics\\ Duke University\\
Durham, NC 27708-0320}
\email{hain@math.duke.edu}

\thanks{Supported in part by grants DMS-0706955 and DMS-0103667 from the
National Science Foundation, and by MSRI}

\date{\today}

\subjclass{Primary 14D05, 14H15; Secondary 14F35, 14F45}

\begin{abstract}
Suppose that $g\ge 3$, that $n\ge 0$ and that $\ell\ge 1$. The main result is
that if $E$ is a smooth variety that dominates a codimension $1$ subvariety $D$
of $\M_{g,n}[\ell]$, the moduli space of $n$-pointed, genus $g$, smooth,
projective curves with a level $\ell$ structure, then the closure of the image
of the monodromy representation $\pi_1(E,e_o) \to \Sp_g(\Zhat)$ has finite index
in $\Sp_g(\Zhat)$. A similar result is proved for codimension $1$ families of
principally polarized abelian varieties.
\end{abstract}

\maketitle


\section{Introduction}

Suppose that $g,\ell,n$ are integers satisfying $g\ge 2$, $\ell\ge 1$ and $n\ge
0$. Denote the moduli space of smooth complex projective curves of genus $g$
with a level $\ell\ge 1$ structure by $\M_g[\ell]$ and the $n$th power of the
universal curve over it by $\cC_g^n[\ell]$.  The moduli space of smooth
$n$-pointed smooth projective curves of genus $g$ with a level $\ell\ge 1$
structure $\M_{g,n}[\ell]$ is a Zariski open subset of $\cC_g^n[\ell]$. These
will all be regarded as orbifolds. There is a natural monodromy representation
$$
\rho : \pi_1(\cC_g^n[\ell],x_o) \to \Sp_g(\Z)
$$
whose image is the level $\ell$-congruence subgroup of $\Sp_g(\Z)$.

The profinite completion of a discrete group $\G$ will be denoted by
$\G^\wedge$. Denote the profinite completion of the integers by $\Zhat$. A
homomorphism $\G \to \GL_N(\Z)$ induces a homomorphism $\G^\wedge \to
\GL_N(\Zhat)$.

\begin{bigtheorem}
\label{thm:weak-density}
Suppose that $g\ge 3$ and $n\ge 0$. If $E\to D$ is a dominant morphism from a
smooth variety to an irreducible divisor $D$ in $\cC_g^n[\ell]$, then the image
of the monodromy representation
$$
\pi_1(E,e_o)^\wedge \to \pi_1(D,d_o)^\wedge \to \Sp_g(\Zhat)
$$
has finite index in $\Sp_g(\Zhat)$.
\end{bigtheorem}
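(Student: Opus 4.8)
The plan is to reduce the statement to a Zariski-density assertion about divisors in $\M_g[\ell]$ and then to rule out small monodromy by showing that such a divisor would be forced to lie inside a locus of curves with special Jacobians, a locus which for $g=3$ or $g\ge5$ has codimension at least two.

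I begin with reductions. The image of the composite in the theorem is the closure in $\Sp_g(\Zhat)$ of the image of $\pi_1(E,e_o)\to\Sp_g(\Z)$, and by strong approximation for the simply connected $\Q$-group $\Sp_g$ (Nori; Matthews--Vaserstein--Weisfeiler) this closure is open --- equivalently of finite index --- exactly when that image is Zariski dense in $\Sp_g$; so it is enough to prove Zariski density. The symplectic local system on $\cC_g^n[\ell]$ is pulled back along the projection $\cC_g^n[\ell]\to\M_g[\ell]$, so this representation factors through $\pi_1(\Dbar)$, where $\Dbar\subseteq\M_g[\ell]$ is the closure of the image of $E$, and $E\to\Dbar$ is dominant. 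Since $D$ has codimension one in $\cC_g^n[\ell]$, whose fibres over $\M_g[\ell]$ are $n$-dimensional, $\dim\Dbar\ge(3g-3+n)-1-n=3g-4$, so either $\Dbar=\M_g[\ell]$ or $\Dbar$ is an irreducible divisor. In the first case $\pi_1(\M_g[\ell])$ already surjects onto the Zariski-dense level-$\ell$ congruence subgroup, and a finite-index subgroup of a Zariski-dense subgroup of the connected group $\Sp_g$ is still Zariski dense, so we are done. Thus it suffices to show: \emph{for $g=3$ or $g\ge5$, if $E$ is smooth and dominates an irreducible divisor $\Dbar\subset\M_g[\ell]$, then the monodromy of the pulled-back family of Jacobians has Zariski-dense image.}

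Let $\V$ be the pullback to $E$ of the weight-one variation of Hodge structure carried by $H^1$ of the universal curve, and let $G\subseteq\Sp_g$ be its connected algebraic monodromy group; we must show $G=\Sp_g$. By Deligne's semisimplicity theorem $G$ is semisimple and preserves the symplectic form on a fibre $V=H^1(C)$. Suppose $G\ne\Sp_g$. If $V$ is reducible as a $G$-module, then after a finite \'etale base change $\V$ splits off a nonzero polarized sub-VHS of smaller rank, so the family of Jacobians over $E$ becomes isogenous to a product of lower-dimensional abelian families or acquires nontrivial endomorphisms; hence the generic Jacobian $\Jac C$ with $[C]\in\Dbar$ has a nontrivial isogeny decomposition or nontrivial endomorphism algebra. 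If instead $V$ is $G$-irreducible with $G\subsetneq\Sp_g$, then the classification of irreducible subgroups of $\Sp_g$, together with the constraint that $V$ must carry a polarized Hodge structure of weight one with Hodge numbers $(g,g)$, again forces $\Jac C$ to be of a special type (of CM type, of quaternionic type, or isogenous to a power of a fixed abelian variety). In every case $\Dbar$ is contained in a proper closed ``special'' locus $S\subsetneq\M_g[\ell]$.

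It remains to bound $\dim S$. When $g=3$ the Torelli morphism $\M_3[\ell]\to\A_3[\ell]$ is dominant and generically finite, so one may pass to --- and establish first --- the analogous statement for divisors in $\A_g$, where the loci of principally polarized abelian varieties with a fixed nontrivial isogeny splitting, with real or complex multiplication, or with quaternionic multiplication all have codimension at least $g-1\ge2$. When $g\ge5$ one argues directly in $\M_g$: such loci are covered by families of curves admitting a nonconstant map to a curve of positive lower genus, together with the corresponding PEL-type families, and a Riemann--Hurwitz computation bounds their dimensions by $3g-5$, so the codimension is again at least two. Either way $\dim S\le 3g-5<3g-4=\dim\Dbar$, a contradiction; hence $G=\Sp_g$, the monodromy on $E$ is Zariski dense, and strong approximation finishes the proof. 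The hard part is precisely this last dimension estimate: one must control \emph{every} proper semisimple possibility for $G$ and bound the resulting locus in $\M_g$ uniformly in the genus. This is where the hypothesis $g=3$ or $g\ge5$ is essential --- for $g=2$ the Humbert surfaces are divisors in $\A_2$ whose generic member is a Jacobian, giving genuine counterexamples, while $g=4$ is excluded because $\M_4$ is itself a divisor in $\A_4$, so neither the reduction to abelian varieties (the image of a divisor would have codimension two in $\A_4$) nor a clean Hurwitz-space bound applies.
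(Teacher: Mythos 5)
Your reductions at the start are fine (strong approximation for the simply connected, $\Q$-simple group $\Sp_g$ \`a la Nori/Matthews--Vaserstein--Weisfeiler, and the passage from $\cC_g^n[\ell]$ to $\M_g[\ell]$ using that the fibres have constant dimension $n$), and your idea of computing the algebraic monodromy group of the VHS directly on the smooth variety $E$ is a legitimate way to sidestep the normalization problem that the paper spends most of its effort on. But the core of your argument has a genuine gap, in two places. First, the dichotomy is misstated: an irreducible proper semisimple subgroup $G\subsetneq\Sp_g$ does \emph{not} force the generic Jacobian to be ``of CM type, of quaternionic type, or isogenous to a power'' --- Mumford's examples in $\A_4$ give families with generic endomorphism ring $\Z$ but proper Mumford--Tate group. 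The correct statement (via Deligne's semisimplicity and Andr\'e's normality theorem, $G\trianglelefteq\mathrm{MT}^{\mathrm{der}}$) is only that the image of $\Dbar$ lies in a proper \emph{special (Shimura) subvariety} of $\A_g[\ell]$, not in a PEL locus. Second, and fatally, the dimension estimate you call ``the hard part'' is indeed the hard part, and you have not proved it: for $g\ge 5$ the special subvariety $\A_1\times\A_{g-1}$ already has dimension $1+g(g-1)/2\ge 3g-4$, so no dimension count inside $\A_g$ can work, and what you actually need is that no proper special subvariety meets the Torelli locus in a divisor of $\M_g$. That is a question in the Coleman--Oort circle of problems; your Riemann--Hurwitz sketch does not address it (a curve with non-simple Jacobian need not cover a lower-genus curve, and non-PEL special subvarieties are not covered at all). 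So for $g\ge5$ the proposal reduces the theorem to an unproved and genuinely difficult assertion.

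It is worth noting that your strategy is precisely the one the paper uses --- but only for $g=4$, the case you exclude. There, a divisor in $\M_4[\ell]$ has dimension $8$, and the classification of Hermitian symmetric subdomains of $\h_4$ shows the only one of dimension between $8$ and $10$ is $\h_4$ itself, so the fixed-part/Mumford--Tate argument closes; you have the roles of $g=4$ and $g\ge5$ reversed. For $g\ge5$ the paper takes an entirely different route: Putman's theorem that $\Pic\M_g[m]$ has rank $1$ makes every irreducible divisor ample; one then cuts with a generic $2$-dimensional linear section $X$ of the Satake compactification and bounds, for every $m$, the index of the image of $\pi_1(\Ctilde)$ in the image of $\pi_1(C)$ in $\Aut(V_o/mV_o)$ by the self-intersection number $C\cdot C$ via the projection formula (Lemma~\ref{lem:normalization}). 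That intersection-theoretic bound, not a classification of special loci, is what controls the monodromy of the normalization; if you want to salvage your approach for $g\ge5$ you would need to supply the missing bound on special subvarieties inside $\M_g$, which is not currently available.
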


The statement is  false when $g=2$ as will be explained in
Section~\ref{sec:density}. We will prove a stronger version of this theorem
(Theorem~\ref{thm:density_curves}), in which $\cC_g^n[\ell]$ is replaced by a
``suitably generic linear section'' of dimension $\ge 3$ of it.  We also prove
similar result for abelian varieties (Theorem~\ref{thm:density_ppavs}).

Each rational representation $V$ of $\Sp_g$ determines a local system $\V$ over
$\cC_g^n[\ell]$. The theorem implies that when $V$ does not contain the trivial
representation, the low dimensional cohomology of $\cC_g^n[\ell]$ with
coefficients in $\V$ changes very little when $\cC_g^n[\ell]$ is replaced by a
Zariski open subset of $\cC_g^n[\ell]$ or by its generic point.

\begin{bigcorollary}
Suppose that $g\ge 3$, $n\ge 0$ and $\ell\ge 1$. If $U$ is a Zariski open subset
of $\cC_g^n[\ell]$, then for all non-trivial, irreducible representations $V$ of
$\Sp(H)$, the map
$$
H^j(\cC_g^n[\ell],\V) \to H^j(U,\V)
$$
induced by the inclusion $U\hookrightarrow \cC_g^n[\ell]$ is an isomorphism when
$j=0,1$ and an injection when $j=2$.
\end{bigcorollary}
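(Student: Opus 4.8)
\emph{Proof strategy.} The plan is to deduce everything formally from Theorem~\ref{thm:weak-density} by feeding it into the localization (Gysin) long exact sequence. Write $X=\cC_g^n[\ell]$, a smooth irreducible orbifold, and $Z=X\smallsetminus U$; if $Z=\emptyset$ there is nothing to prove, so assume $Z$ is a nonempty proper closed subset, necessarily of codimension $\ge 1$. (For $\ell\le 2$ I would first pass to a finite level cover, say of level $4\ell$, over which $X$ is a manifold and $U$ is still Zariski open; all three cohomology statements are compatible with such a cover and with taking invariants under its deck group, so this reduces to the case where $X$ is an honest smooth variety.) First I would write down
$$
\cdots \to H^j_Z(X,\V)\to H^j(X,\V)\to H^j(U,\V)\to H^{j+1}_Z(X,\V)\to\cdots
$$
and recall the semipurity bound $H^j_Z(X,\V)=0$ for $j\le 1$, which comes from d\'evissage over a stratification of $Z$ into smooth locally closed pieces of complex codimension $\ge 1$, applying the Thom isomorphism on each stratum. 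Substituting $H^0_Z=H^1_Z=0$ gives at once $H^0(X,\V)\xrightarrow{\ \sim\ }H^0(U,\V)$, which settles $j=0$, together with an exact sequence
$$
0\to H^1(X,\V)\to H^1(U,\V)\to H^2_Z(X,\V)\xrightarrow{\ c\ }H^2(X,\V)\to H^2(U,\V).
$$
A short diagram chase shows that ``$H^1(X,\V)\to H^1(U,\V)$ is an isomorphism'' is equivalent to ``$c$ is injective'', while ``$H^2(X,\V)\to H^2(U,\V)$ is injective'' is equivalent to ``$c=0$''; hence the two remaining assertions of the corollary are together equivalent to the single vanishing statement $H^2_Z(X,\V)=0$, which is what I would aim to prove.

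Next I would compute $H^2_Z(X,\V)$. Let $D_1,\dots,D_r$ be the codimension-one irreducible components of $Z$, and let $Z'\subset X$ be the union of all the remaining (codimension $\ge 2$) components of $Z$ together with the singular loci of the $D_i$ and their pairwise intersections; then $Z'$ is closed of codimension $\ge 2$, and $D_i^\circ:=D_i\smallsetminus Z'$ is a connected smooth closed divisor in $X\smallsetminus Z'$, with $Z\smallsetminus Z'=\bigsqcup_i D_i^\circ$. Since $\codim Z'\ge 2$, semipurity gives $H^2_{Z'}(X,\V)=H^3_{Z'}(X,\V)=0$, so excision identifies $H^2_Z(X,\V)$ with $H^2_{Z\smallsetminus Z'}(X\smallsetminus Z',\V)$, and purity for the smooth divisors $D_i^\circ$ then gives
$$
H^2_Z(X,\V)\;\cong\;\bigoplus_{i=1}^r H^0\!\big(D_i^\circ,\,\V|_{D_i^\circ}\big)\;=\;\bigoplus_{i=1}^r V^{\Gamma_i}
$$
up to an irrelevant Tate twist, where $\Gamma_i$ is the image of $\pi_1(D_i^\circ)\to\pi_1(X)\xrightarrow{\rho}\Sp_g(\Z)$.

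It is here that Theorem~\ref{thm:weak-density} does the real work: each $D_i^\circ$ is a smooth variety dominating the irreducible divisor $D_i$ of $\cC_g^n[\ell]$, so, applying the theorem with $E=D_i^\circ$, the closure of $\Gamma_i$ in $\Sp_g(\Zhat)$ has finite index, hence is open. Using strong approximation (so that $\Sp_g(\Z)$ is dense in $\Sp_g(\Zhat)$) together with an elementary counting argument --- comparing $|\Gamma_i\bmod p^k|$, which is at least a fixed fraction of $|\Sp_g(\Z/p^k)|$, with $|G(\Z/p^k)|=O(p^{k\dim G})$ for $G$ the Zariski closure of $\Gamma_i$ --- I would conclude $G=\Sp_g$, i.e.\ $\Gamma_i$ is Zariski dense in $\Sp(H)=\Sp_g$. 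Since invariants of a Zariski-dense subgroup agree with those of the ambient group, and $V$ is a nontrivial irreducible representation, $V^{\Gamma_i}=V^{\Sp(H)}=0$ for each $i$. Therefore $H^2_Z(X,\V)=0$, which by the first paragraph completes the proof.

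The only genuinely hard input is Theorem~\ref{thm:weak-density} itself, which is precisely what supplies the vanishing $V^{\Gamma_i}=0$; granting it, the argument is entirely formal. The steps that need a little care are the d\'evissage that pins $H^2_Z(X,\V)$ down to the smooth codimension-one locus $\bigsqcup_i D_i^\circ$ when $Z$ is singular or reducible, the reduction to a manifold when $\ell\le 2$, and the passage from ``finite index in $\Sp_g(\Zhat)$'' to ``Zariski dense in $\Sp_g$''. I do not expect any of these to present a serious obstacle.
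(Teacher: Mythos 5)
Your proposal is correct and is essentially the paper's own argument (given in Section~\ref{sec:applications}): decompose $X\smallsetminus U$ into a codimension~$\ge 2$ piece and irreducible divisors, run the Gysin/local-cohomology sequence, and kill the obstruction term $\bigoplus_i H^0(D_i^\circ,\V)$ by noting that Theorem~\ref{thm:weak-density} applied to $D_i^\circ\to D_i$ makes each monodromy group Zariski dense in $\Sp_g$, so a nontrivial irreducible $V$ has no invariants. The only cosmetic difference is that the paper routes the density input through Lemma~\ref{lem:families} (lean morphisms) rather than quoting Theorem~\ref{thm:weak-density} directly.
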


The groups $H^j(\cC_g^n[\ell],V)$ are known for all $V$ when $j=1$ and $g\ge 3$;
a canonical subspace of it is known \cite{hain:torelli} when $j=2$ when $g\ge
6$. The resulting computation of the Galois cohomology groups
$H^j(G_{k(\cC_g^n[\ell])},V\otimes\Q_p)$ of the absolute Galois group of the
function field $k(\cC_g^n[\ell])$, when $k$ is a number field, is an essential
ingredient of the author's investigation \cite{hain:sec_conj} of rational points
of the universal curve over the function field of $\M_{g,n/k}[\ell]$.

The proof of Theorem~\ref{thm:weak-density} easily reduces to the case $n=0$.
Putman \cite{putman:pic} has proved that  the Picard group of $\M_g[\ell]$ has
rank $1$ when $g\ge 5$. A standard argument using the that fact that
$\M_g[\ell]$ is quasi-projective then implies that every irreducible divisor $D$
in $\M_g[\ell]$ is ample. A Lefschetz type theorem due to Goresky and MacPherson
\cite{smt} implies that when $D$ is generic, $\pi_1(D,d_o) \to
\pi_1(\M_g[\ell],x_o)$ is an isomorphism. In this case, the result is immediate.
The principal difficulty occurs when $D$ has self-intersections.
\begin{figure}[!ht]
\label{fig:normalization}
\epsfig{file=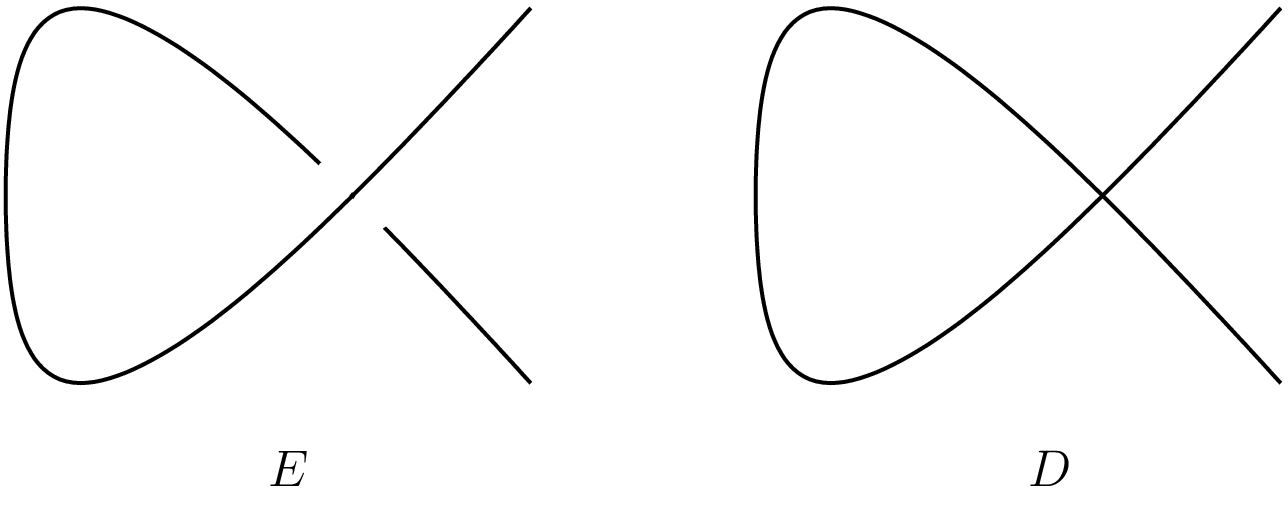, width=3in}
\caption{Normalization}
\end{figure}
In this case the image of the homomorphism $\pi_1(E,e_o) \to \pi_1(D,d_o)$
induced by its normalization $E\to D$ may have infinite index in $\pi_1(D,d_o)$,
as it does in Figure~\ref{fig:normalization}. This issue is addressed by a
technical result, Theorem~\ref{thm:strictness}, from which
Theorem~\ref{thm:weak-density} follows directly. It is a hybrid of a
``non-abelian strictness theorem'' and a Lefschetz-type theorem.

The term {\em non-abelian strictness theorem} needs further explanation. Input
for one such type of theorem is a diagram
\begin{equation}
\label{eqn:strict}
\xymatrix{
Z \ar[r]^f & Y \ar[r]^j & X
}
\end{equation}
of morphisms of varieties, where $X$ and $Z$ are smooth, and where $f$ is
dominant. Deligne proved several prototypical strictness theorems for cohomology
in \cite{deligne:hodge}. In the situation (\ref{eqn:strict}), a standard
strictness argument, given in Section~\ref{sec:discussion}, implies that the
image of $H_1(Z) \to H_1(X)$ has finite index in the image of $H_1(Y) \to
H_1(X)$, even though the image of $H_1(Z)$ may have infinite index in $H_1(Y)$.
We will refer to this as an ``abelian strictness theorem'' as the invariant
$H_1$ is the abelianization of the fundamental group. The most optimistic
formulation of a non-abelian strictness assertion would be that the image of
$\pi_1(Z,z_o)$ in $\pi_1(X,x_o)$ has finite index in the image of $\pi_1(Y,y_o)$
in $\pi_1(X,x_o)$. A weaker assertion would be that this holds for profinite
fundamental groups. A less optimistic statement would be that, for all reductive
linear representations $\pi_1(X,x_o) \to \GL_N(F)$, the image of
$\rho_Z:\pi_1(Z,z_o) \to \GL_N(F)$ has finite index in the image of
$\rho_Y:\pi_1(Y,z_o) \to \GL_N(F)$, or even that the Zariski closure of the
image of $\rho_Z$ has finite index in the Zariski closure of the image of
$\rho_Y$. All four assertions are false, as is shown by the example in
Section~\ref{sec:discussion}. This, I hope, goes part way towards justifying the
technical assumptions of Theorem~\ref{thm:strictness}.

\bigskip

\noindent{\em Conventions:}  An orbifold is a stack in the category of
topological spaces in which the automorphism group of each point is finite. All
the orbifolds considered in this paper are quotients of a contractible complex
manifold by a virtually free action of a finitely generated discrete group.
Equivalently, they are all quotients of a complex manifold by a finite group. A
detailed exposition of such orbifolds can be found in \cite{hain:elliptic}.

Unless stated to the contrary, all varieties are defined over $\C$. They will be
regarded as topological spaces (or orbifolds) in the complex topology.  A
divisor in a variety $X$ is a closed subvariety of pure codimension $1$.

Implicit in the statement that a map $f: X \to Y$ induces a homomorphism
$\pi_1(X,x_o) \to \pi_1(Y,y_o)$ is the assertion that $y_o = f(x_o)$.

\bigskip

\noindent{\em Acknowledgments:} Much of this work was done in 2009 during visits
to MSRI, the Universit\'e de Nice and the Newton Institute. I am very grateful
to these institutions for their hospitality and support and to Duke University
for the sabbatical leave during which this work was completed. I would like to
thank Tomohide Terasoma for pointing out a gap in an early proof of the main
theorem. Special thanks to the referee for suggesting improvements to the
exposition and for providing a proof of Proposition~\ref{prop:referee}, which
resolved a question in an earlier version of this paper and allowed a
superfluous assumption to be removed from Theorems~\ref{thm:density_curves} and
\ref{thm:density_ppavs}. He/she also suggested simplifications and improvements
to the constructions in Section~\ref{sec:ab_strictness}.

I am especially grateful to Madhav Nori who alerted me to a gap in the proof of
Lemma~\ref{lem:normalization} and contributed its current formulation and proof.
He also suggested the approach to proving Theorem~\ref{thm:weak-density} in
genus 4, pointing out the relevance of the remarkable result \cite{mvw} due
to Matthews, Vaserstein and Weisfeiler, which can also be found in
\cite[Thm.~5.4]{nori:finite}.

\section{Setup and Statement of the Strictness Theorem}
\label{sec:strictness}

\subsection{Setup}
\label{sec:setup}
The main objects in the Strictness Theorem are:
\begin{enumerate}

\item a smooth, connected quasi-projective variety $M$,
\label{item1}

\item a projective completion $\Mbar$ of $M$ and an open subvariety
$M^c$ that contains $M$,
\label{item2}

\item a local system $\V$ over $M^c$ whose fibers are torsion free $\Z$-modules.
\label{item3}

\end{enumerate}
We will suppose that:
\begin{enumerate}
\setcounter{enumi}{3}

\item each component of $\Mbar - M$ has codimension $\ge 2$ in $\Mbar$ and each
component of $\Mbar - M^c$ has codimension $\ge 3$ in $\Mbar$,
\label{item4}

\item $M^c$ is unibranch (i.e., locally analytically irreducible) at the generic
point of each codimension $2$ stratum of $M^c - M$;
\label{item5}

\end{enumerate}
Choose a basepoint $x_o$ of $M$. Denote the fiber of $\V$ over $f(x_o)$ by
$V_o$. Set $\G_o = \Aut V_o$. For each $m\ge 1$, set
$$
\G_o[m] = \{\phi \in \Aut V_o : \phi \equiv \id \bmod\ m\}.
$$
The inverse image of $\G_o[m]$ under the monodromy representation $\rho :
\pi_1(M,x_o) \to \G_o$ corresponds to a connected, unramified, Galois covering
$p_m : M_m \to M$.

\begin{enumerate}

\setcounter{enumi}{5}

\item $\Pic M_m$ is finitely generated of rank 1 for all $m\ge 1$.
\label{item6}

\end{enumerate}
Note that if $\dim M = 1$, then $M=\P^1$.

\subsection{An example}
The motivating example, to be explained in detail in Section~\ref{sec:satake},
is:
\begin{itemize}

\item $M$ is $\M_g[\ell]$, the moduli space of smooth projective curves of genus
$g\ge 5$ with a level $\ell \ge 3$ structure;

\item $\Mbar$ is its Satake compactification and $M^c$ is the open subset whose
points correspond to genus $g$ curves of compact type;

\item $\V$ is the local system over $M^c$ whose fiber over the moduli point of
the curve $C$ is the first cohomology $H^1(\Jac C;\Z)$ of its jacobian.

\end{itemize}
In this example $M_m = \M_g[m']$ where $m'=\mathrm{lcm}(m,\ell)$.

Another example is provided by taking $M=M^c$ to be the moduli space
$\A_g[\ell]$ of principally polarized abelian varieties with a level $\ell \ge
3$ structure, where $g\ge 3$. Details will be given in Section~\ref{sec:satake}.

\subsection{Statement of the Strictness Theorem}

\begin{theorem}
\label{thm:strictness}
Suppose that $D$ is an irreducible divisor in $M$ and that $h : E \to D$ is a
dominant morphism. If the conditions in Section~\ref{sec:setup} are satisfied,
then the image of $\pi_1(E,e_o)^\wedge \to \G_o^\wedge$ has finite index in the
image of $\pi_1(M,x_o)^\wedge \to \G_o^\wedge$.
\end{theorem}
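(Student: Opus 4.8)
The plan is to prove Theorem~\ref{thm:strictness} by combining an abelian strictness argument (to control the image of $H_1$) with a Lefschetz-type result that bootstraps this to the full monodromy, exploiting the rank-one hypothesis on $\Pic M_m$ to force divisors to be ample on every finite level cover.

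\medskip

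\noindent\emph{Step 1: Reduce to a statement about a single finite level cover.}
Let $\Gamma = \im(\pi_1(M,x_o)^\wedge \to \G_o^\wedge)$ and let $\Gamma_E$ be the image of $\pi_1(E,e_o)^\wedge$. Since $\Gamma_E$ is a closed subgroup of the profinite group $\Gamma$, it has finite index iff it surjects onto $\Gamma/\overline{\Gamma[m]\cap\Gamma}$ for a cofinal system of $m$ — equivalently, iff the composite $\pi_1(E,e_o) \to \Gamma \to \G_o/\G_o[m]$ has image of index bounded independently of $m$. Pulling back the covering $p_m : M_m \to M$ along $h$, it suffices to show: for each $m$, the preimage $\widetilde D \subset M_m$ of $D$, together with the induced map $\widetilde E \to \widetilde D$ on a suitable component, carries enough of $\pi_1$ onto $\pi_1(M_m,\cdot)$ modulo its own congruence subgroups. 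The key point is that all of hypotheses (1)--(6) are inherited by $M_m$ in place of $M$ (with $\Mbar_m, M^c_m$ the normalizations of $\Mbar, M^c$ in $M_m$ — here (4) and (5) are exactly what is needed to ensure that passing to the cover does not create bad strata), so it is enough to prove the theorem ``at level $1$'', i.e.\ to show the image of $\pi_1(E,e_o) \to \pi_1(M,x_o)^\wedge \to \G_o^\wedge$ is open, under the assumption that we may freely replace $M$ by any $M_m$.

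\medskip

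\noindent\emph{Step 2: Abelian strictness plus ampleness handles the case $\pi_1(D) \to \pi_1(M)$ surjective.}
By hypothesis (6), $\Pic M$ is finitely generated of rank $1$; as $M$ is quasi-projective this forces every irreducible divisor $D$ in $M$ to be the support of an ample divisor (some positive multiple of $D$ is linearly equivalent to an ample class, since any ample class is a positive rational multiple of $D$ in $\Pic M\otimes\Q$). Then the Goresky--MacPherson Lefschetz theorem \cite{smt} applies: for $D$ generic in its linear system the map $\pi_1(D,d_o)\to\pi_1(M,x_o)$ is surjective (an isomorphism if $\dim M \ge 3$). But $D$ is \emph{given}, not generic; here is where the non-abelian difficulty appears, and where the abelian strictness theorem of Section~\ref{sec:discussion} enters. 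Applying that result to the diagram $E \to D \to M$ (both $E$ and $M$ smooth, $h$ dominant) shows $\im(H_1(E)\to H_1(M))$ has finite index in $\im(H_1(D)\to H_1(M))$. Combined with the fact that $D$ is ample — so a hyperplane-section-type Lefschetz argument shows $H_1(D)\to H_1(M)$ is \emph{surjective} after tensoring with $\Q$ — we get that $\im(H_1(E;\Q)\to H_1(M;\Q))$ is all of $H_1(M;\Q)$. This controls the abelianization of the monodromy image.

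\medskip

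\noindent\emph{Step 3: From $H_1$ to the profinite image via the rank-one Picard hypothesis applied at every level.}
The abelianization control from Step 2 is not by itself enough: $\G_o^\wedge$ is highly non-abelian. The mechanism to promote it is to run Step 2 simultaneously on \emph{all} the covers $M_m$, using that (6) holds for each of them. Concretely: the image $\Gamma_E$ fails to be open iff there is some $m$ and some proper subgroup $\Gamma' \supsetneq \Gamma_E[m]$ with $\Gamma_E \subseteq \Gamma'$ of infinite index; such a $\Gamma'$ would correspond to an infinite tower of finite covers $M' \to M'' \to \cdots$ through which $E$ factors, i.e.\ $h$ lifts to each $M^{(k)}$. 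On each $M^{(k)}$ the divisor $D$ pulls back to an ample divisor $D^{(k)}$ (rank-one Picard again), $h$ lifts to $E^{(k)} \to D^{(k)}$, and Step 2 gives $\im(H_1(E^{(k)};\Q) \to H_1(M^{(k)};\Q)) = H_1(M^{(k)};\Q)$. Thus the covering tower $M^{(k)}$ is ``abelianly detected'' by $E$ — but an infinite tower through which $h$ genuinely lifts would force $H_1(M^{(k)};\Q)$, hence the abelianization of the relevant congruence quotients, to stabilize, which is false since the $\Z$-rank of $H_1$ of congruence covers of $M = \M_g[\ell]$ (or $\A_g[\ell]$) grows. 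This contradiction shows no such infinite tower exists, i.e.\ $\Gamma_E$ is open in $\Gamma$.

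\medskip

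\noindent\emph{Expected main obstacle.}
The delicate point is Step 3: making precise the passage from ``$\im H_1$ is everything at every finite level'' to ``the profinite image is open.'' The subtlety is that abelian strictness only controls $H_1$ with \emph{$\Q$}-coefficients, whereas the congruence subgroups $\G_o[m]$ are sensitive to torsion and to the full (non-abelian) structure; one must argue that the rank-one Picard hypothesis, feeding a uniform ampleness statement into the Lefschetz theorem on every $M_m$, rigidifies things enough that an infinite lift would contradict the known growth of $H^1$ of the moduli-space covers (this is presumably where the explicit computation of $H^1(\cC_g^n[\ell],\V)$ for $g\ge 3$ cited in the introduction, or Nori's finiteness result \cite{nori:finite}, gets used, and also why the genus $2$ and genus $4$ cases behave differently). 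I also expect the verification that conditions (1)--(6), especially the unibranch condition (5) and the codimension estimates in (4), are stable under passing to $M_m$ and under normalization to require care — the whole geometry of the Satake boundary is designed so that self-intersections of $D$ interact controllably with the congruence tower, and that is exactly the ``principal difficulty'' flagged in the introduction.
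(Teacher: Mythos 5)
There is a genuine gap, and it sits exactly where you flag your ``expected main obstacle.'' The paper's proof does not promote an $H_1$ statement to the profinite image by a tower/growth argument; it produces a \emph{uniform numerical bound, independent of the level $m$}, by intersection theory on a surface. Concretely: one cuts $M^c$ by a generic complete $2$-dimensional linear section $X$ of $\Mbar$ transverse to $D$ and to the boundary strata (this is where hypotheses (4) and (5) are used, to make $X$ complete and unibranch), sets $C=D\cap X$, and shows (Lemma~\ref{lem:normalization}, due to Nori) that for \emph{every} $m$ the index of $\im\{\pi_1(\Ctilde,\xtilde_o)\to\G_{o,m}\}$ in $\im\{\pi_1(C,x_o)\to\G_{o,m}\}$ is at most the self-intersection number $C\cdot C$. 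The mechanism is the projection formula on the intermediate cover $Y_m=K_m\backslash X_m$: $[G_m:K_m]\,(C''\cdot C'')=C\cdot C$, together with $C''\cdot C''>0$, which holds because $C''$ is ample --- and ampleness of every irreducible divisor on every cover $M_m^c$ is precisely what the rank-one Picard hypothesis buys. Passing to the limit over $m$ then gives finite index in $\G_o^\wedge$, and Proposition~\ref{prop:normalization} transfers the statement from $\Ctilde$ to $\Dtilde$ to $E$. Your proposal contains no analogue of this uniform bound, and without one the passage from finite-level information to openness of the profinite image is exactly the ``non-abelian strictness'' assertion that Section~\ref{sec:discussion} of the paper shows to be false in general.

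Two further concrete problems. First, your Step 2 computes the image of $H_1(E;\Q)$ in $H_1(M;\Q)$; but the theorem concerns the image in $\G_o^\wedge$, i.e.\ the monodromy of $\V$, not the abelianization of $\pi_1(M)$. For the motivating examples ($M=\M_g[\ell]$ or $\A_g[\ell]$) one has $H_1(M;\Q)=0$, so Step 2 is vacuous and carries no information about the symplectic monodromy. Second, the contradiction driving your Step 3 rests on the claim that the $\Q$-rank of $H_1$ of the congruence covers $M_m$ grows; this is unsubstantiated and is in fact contrary to Borel's vanishing $H^1(\A_g[m];\Q)=0$ for $g\ge 2$ (and to hypothesis (6) itself, which forces $H^1(M_m;\Q)$ to have rank at most one for all $m$). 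So the infinite tower you posit would not produce a contradiction by your stated mechanism. The correct replacement for both steps is the ampleness-plus-projection-formula bound above.
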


It is not clear that all of these hypotheses are necessary. Inspired by the
abelian case (Proposition~\ref{prop:abelian}), one might suspect that the image
of $\pi_1(E,e_o)^\wedge \to \pi_1(M,x_o)^\wedge$ has finite index in the image
of $\pi_1(D,x_o)^\wedge \to \pi_1(M,x_o)^\wedge$. Unfortunately, this is not the
case as we show in Section~\ref{sec:examples}, where we present examples that
suggest that most of the hypotheses above are necessary.

\section{Topological and Geometric Preliminaries}
\label{sec:lefschetz}

\subsection{Topological preliminaries}

One of the main tools used to prove the Strictness Theorem is a Lefschetz type
theorem proved by Goresky and MacPherson in their book \cite{smt} on stratified
Morse theory. For the convenience of the reader, we state the special case of
this theorem that we will be using repeatedly.

Fix a real analytic riemannian metric on $\P^N$. For a subset $A$ of $\P^N$ and
a real number $\delta > 0$, set
$$
A_\delta :=\{ x \in \P^N : \mathrm{dist}(x,A) < \delta \}.
$$

\begin{theorem}[{Goresky and MacPherson \cite[p.~150]{smt}}]
\label{thm:smt}
Suppose that $X$ is an $n$-dimensional, connected complex algebraic manifold and
that $f : X \to \P^N$ is a morphism, all of whose fibers are finite. If $L$ is a
codimension $c$ linear subspace of $\P^N$, then for all sufficiently small
$\delta>0$, the homomorphism
\begin{equation}
\label{eqn:smt}
\pi_j(f^{-1}(L_\delta), x) \to \pi_j(X,x)
\end{equation}
induced by the inclusion is an isomorphism when $j<n(c)$ and surjective when
$j=n(c)$, where $n(c) = \max\{n-c,-1\}$. If $L$ is generic or if $f^{-1}(L)$ is
proper, then $L_\delta$ may be replaced by $L$ in (\ref{eqn:smt}).
\end{theorem}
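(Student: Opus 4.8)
The plan is to prove this via stratified Morse theory applied to the squared-distance function to $L$. Fix the real-analytic metric on $\P^N$ from the statement and let $u:\P^N\to\R_{\ge 0}$ be $u(y)=\mathrm{dist}(y,L)^2$; it is real analytic in a neighborhood of $L$, where all the work will take place since only small $\delta$ is asserted, and it vanishes exactly along $L$. Set $\phi=u\circ f:X\to\R_{\ge 0}$. Then $f^{-1}(L_\delta)=\phi^{-1}\big([0,\delta^2)\big)$, so the theorem is equivalent to the statement that $X$ is obtained, up to homotopy, from $f^{-1}(L_\delta)$ by attaching cells of dimension $\ge n-c+1$: this gives the $\pi_j$-isomorphism for $j<n(c)$ and surjectivity for $j=n(c)$, and is vacuous when $n<c$, consistently with the fact that $f^{-1}(L_\delta)$ may then be empty.

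There are two routes, and I would run both. The first is a reduction: since $f^*\cO_{\P^N}(1)$ is base-point-free on $X$, repeated application of Bertini shows that intersecting with $c-1$ generic hyperplanes keeps the preimage smooth, so induction on $c$ reduces the problem to the hyperplane case $c=1$ (the classical Lefschetz hyperplane situation for a finite map), and the passage from a generic $L$ to an arbitrary $L$ with the thickening $L_\delta$ is a limiting argument comparing $f^{-1}(L_\delta)$ with $f^{-1}(L')$ for $L'$ generic and within $\delta/2$ of $L$. The second, which I prefer because it explains the bound, is to run Morse theory directly in codimension $c$, as follows.

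The heart is the Morse theory for $\phi$. Because $f$ need only have finite fibers, $\phi=u\circ f$ is not a Morse function in the naive sense; I would equip $X$ with a Whitney stratification adapted to $f$ (strata being loci of constant local structure of $f$, so that the ramification locus is a union of lower strata) and, after replacing $L$ by a generic member of its linear system, arrange that $\phi$ is a stratified Morse function on the region $\{\phi\ge\delta^2\}$, taking care of the ends of $X$ via a properness or tameness hypothesis as in \cite{smt}. The Main Theorem of Stratified Morse Theory then says that passing each critical value attaches the product of tangential Morse data and normal Morse data. The essential claim is the index estimate: \emph{every critical point of $\phi$ lying over $\P^N\setminus L$ contributes cells of dimension $\ge n-c+1$.} The tangential part is an Andreotti--Frankel--Bott computation: near $L$ the Levi form of $u$ has at least $c$ positive eigenvalues (for a complex linear $L$, $u$ is comparable to $\|w\|^2$ for a $\C^c$-valued linear form $w$), and restricting a function whose Levi form has at least $c$ positive eigenvalues to a complex manifold of dimension $n$ forces, via the relation between the real and complex Hessians at a critical point, the Morse index to be at least $n-c+1$ at any nondegenerate critical point over $\P^N\setminus L$. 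The normal part is controlled because at a ramification stratum the normal Morse data is the Morse data of the complex link of a complex analytic set, which is highly connected, and tensoring with the tangential data only increases dimensions.

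Finally, for the improvement when $L$ is generic or $f^{-1}(L)$ is proper: in either case $f^{-1}(L)$ admits a regular (Thom--Mather) neighborhood in $X$, and for small $\delta$ the open set $f^{-1}(L_\delta)$ deformation retracts onto $f^{-1}(L)$ compatibly with the inclusion into $X$, so $L_\delta$ may be replaced by $L$ in (\ref{eqn:smt}). I expect the genuine obstacle to be the index estimate at the ramification strata of $f$ together with the choice of perturbation: keeping track of how the tangential Morse index interacts with the complex link when $f$ is not an immersion, and verifying that $L$ (or the metric) can be perturbed so that $\phi$ becomes stratified Morse while preserving the Levi-form positivity near $L$ and the tameness at the ends of $X$. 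Away from the ramification locus everything reduces to the classical complex-Hessian computation recalled above.
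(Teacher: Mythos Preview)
The paper does not prove this theorem. It is quoted as a special case of the main theorem of Goresky--MacPherson's book \cite{smt}, and the only argument the paper supplies concerns the final sentence: the case where $f^{-1}(L)$ is proper is deduced from Durfee's result \cite{durfee} that $f^{-1}(L_\delta)$ is then a regular neighbourhood of $f^{-1}(L)$, so the inclusion $f^{-1}(L)\hookrightarrow f^{-1}(L_\delta)$ is a homotopy equivalence. Your last paragraph reaches the same conclusion by the same mechanism, so on that point you and the paper agree.

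What you have written for the body of the theorem is not a comparison with the paper's proof but a sketch of the Goresky--MacPherson argument itself. As an outline it is in the right spirit: one studies the squared-distance function to $L$ via stratified Morse theory, and the connectivity bound comes from an index estimate of Andreotti--Frankel type together with the high connectivity of complex links. Two places in your sketch would need real work to become a proof. First, the Bertini reduction to $c=1$ presupposes genericity and does not by itself handle an arbitrary $L$; the passage from generic $L'$ near $L$ back to $f^{-1}(L_\delta)$ is exactly where the stratified theory does its job, so this route does not really avoid the second approach. Second, your index estimate at ramification strata is where the genuine content lies: one must show that the product of tangential and normal Morse data still contributes cells only in dimensions $\ge n-c+1$, and this requires the structure theorems for complex links in \cite{smt} rather than a one-line appeal to ``high connectivity.'' You also need to address non-properness of $f$ carefully; Goresky--MacPherson handle this by working with the closure of the image in $\P^N$ and controlling Morse data near the boundary, not by an unspecified tameness hypothesis. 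None of this is wrong, but it is the book-length part of the argument that the paper deliberately outsources.
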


The final statement is slightly stronger than that made on page 151 of
\cite{smt}. The stronger statement follows from a result of Durfee \cite{durfee}
which implies that when $f^{-1}(L)$ is compact, $f^{-1}(L_\delta)$ is a regular
neighbourhood of $f^{-1}(L)$, so that the inclusion $f^{-1}(L)\hookrightarrow
f^{-1}(L_\delta)$ is a homotopy equivalence.

The theorem implies that generic linear sections of $X$ of dimension $\ge 3$
have the same Picard group.

\begin{corollary}
Assume the notation of the theorem above.
If $L$ is a generic linear subspace of $\P^N$ of codimension $\le \dim X - 3$,
then the inclusion $f^{-1}(L) \to X$ induces an isomorphism
$$
\Pic X \to \Pic f^{-1}(L).
$$
\end{corollary}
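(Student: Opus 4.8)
The plan is to deduce the isomorphism $\Pic X \to \Pic f^{-1}(L)$ from Theorem~\ref{thm:smt} by passing from $\pi_1$ and $H^2$ to Picard groups, using that for a smooth variety $Y$ the Picard group fits into an exact sequence (coming from the exponential sequence and the fact that $H^1(Y,\cO_Y) = H^2(Y,\cO_Y) = 0$ when $Y$ is affine, but more robustly from comparing $\Pic$ with $H^2$ via cycle classes). Actually, the cleanest route is via the isomorphism $\Pic Y \cong H^1(Y,\cO_Y^\times)$ and, for smooth $Y$, the Lefschetz $(1,1)$-type description is not needed: what we really want is that $\Pic$ of a smooth variety is a ``topological'' invariant in a range of dimensions. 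Concretely, I would argue as follows.

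\emph{Step 1: Set up the linear section.} Since $L$ is generic of codimension $c \le \dim X - 3$, Theorem~\ref{thm:smt} with $n = \dim X$ gives that $\pi_j(f^{-1}(L),x) \to \pi_j(X,x)$ is an isomorphism for $j < n(c) = n - c$ and surjective for $j = n-c$; as $n - c \ge 3$, this covers $j = 1, 2$ and surjectivity at $j = 3$. In particular $\pi_1(f^{-1}(L)) \cong \pi_1(X)$ and, by the Hurewicz/Whitehead argument applied to the map of spaces (or directly to the induced map on the universal cover), $H_j(f^{-1}(L),\Z) \to H_j(X,\Z)$ is an isomorphism for $j \le 2$. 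By the universal coefficient theorem the same holds for cohomology: $H^j(X,\Z) \to H^j(f^{-1}(L),\Z)$ is an isomorphism for $j \le 2$ (using that $H^j$ is built from $H_j$ and $H_{j-1}$).

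\emph{Step 2: From topology to $\Pic$.} Write $Y = f^{-1}(L)$. Both $X$ and $Y$ are smooth, so their Picard groups are computed by the exponential sequence: there is a commutative diagram with exact rows
\begin{equation*}
\xymatrix{
H^1(X,\cO_X) \ar[r]\ar[d] & \Pic X \ar[r]\ar[d] & H^2(X,\Z) \ar[r]\ar[d] & H^2(X,\cO_X) \ar[d] \\
H^1(Y,\cO_Y) \ar[r] & \Pic Y \ar[r] & H^2(Y,\Z) \ar[r] & H^2(Y,\cO_Y).
}
\end{equation*}
The middle-right vertical map $H^2(X,\Z) \to H^2(Y,\Z)$ is an isomorphism by Step~1. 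If one knew in addition that $H^1(\cdot,\cO)$ and $H^2(\cdot,\cO)$ behaved compatibly, the five lemma would finish the argument. The coherent-cohomology comparison is exactly the place where a genuine Lefschetz hyperplane theorem for coherent sheaves (or for Hodge structures) enters; alternatively, one invokes that $X$ and $Y$, being smooth, carry mixed Hodge structures on $H^2$, that $H^2(X,\Z) \to H^2(Y,\Z)$ is a morphism of MHS which is an isomorphism on the underlying groups, hence an isomorphism of MHS, hence identifies the $F^1$-pieces; the cokernel of $\Pic \to H^2(\cdot,\Z)$ is $H^2/F^1 H^2$ up to torsion considerations, and the source $\Pic^0 = H^1(\cO)/H^1(\Z)$ is likewise controlled. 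The Néron--Severi part then matches on the nose.

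\emph{Step 3: Conclude.} Combining Steps 1 and 2, the vertical map $\Pic X \to \Pic Y$ is forced to be an isomorphism by the five lemma applied to the diagram above, once the outer four verticals are isomorphisms.

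\textbf{Main obstacle.} The delicate point is Step~2: Theorem~\ref{thm:smt} as stated is a statement about homotopy groups, hence controls only the topological cohomology $H^j(\cdot,\Z)$, not the coherent cohomology groups $H^i(\cdot,\cO)$ that also appear in the exponential sequence. The honest fix is to observe that the identification of $H^2(X,\Z)$ with $H^2(Y,\Z)$ is automatically compatible with mixed Hodge structures (a morphism of MHS that is an isomorphism of abelian groups is an isomorphism of MHS), so the Hodge filtrations correspond; this pins down both $\Pic^0$ and $\mathrm{NS}$ on each side and makes the comparison of Picard groups automatic. I expect the write-up to be short once this MHS remark is in place; the alternative of quoting a coherent Lefschetz theorem would also work but requires $L$ to be a genuine hyperplane section rather than the slightly more flexible ``generic linear subspace'' allowed here.
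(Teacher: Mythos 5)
Your proposal is correct and follows essentially the same route as the paper: the paper deduces the corollary from Theorem~\ref{thm:smt} together with the exact sequence $0 \to \Ext^1_\hodge(\Z,H^1(Y,\Z(1))) \to \Pic Y \to \Hom_\hodge(\Z(0),H^2(Y,\Z(1))) \to 0$ for a smooth variety $Y$, which is exactly the mixed-Hodge-structure formulation you arrive at in Step~2 after (rightly) discarding the naive coherent-cohomology version of the exponential sequence. The only cosmetic difference is that the paper quotes this sequence directly rather than rederiving it, and your remark that an isomorphism of underlying groups which is a morphism of MHS is an isomorphism of MHS is the (implicit) strictness point that makes both functors $\Ext^1_\hodge$ and $\Hom_\hodge$ compatible with the comparison.
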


\begin{proof}
Since $X$ and $f^{-1}(L)$ are smooth, the assertion follows from
Theorem~\ref{thm:smt} and the fact that for a complex algebraic manifold $Y$ the
sequence
\begin{equation}
\label{eqn:pic}
0 \to \Ext^1_\hodge(\Z,H^1(Y,\Z(1))) \to \Pic Y \to
\Hom_\hodge(\Z(0),H^2(Y,\Z(1))) \to 0
\end{equation}
is exact.
\end{proof}

A direct consequence is that the hypotheses of Theorem~\ref{thm:strictness}
behave well when taking generic linear sections of dimension $\ge 3$.

\begin{corollary}
\label{cor:linear}
Suppose that the members of the diagram
$$
M \hookrightarrow M^c \hookrightarrow \Mbar
$$
and the local system $\V$ over $M^c$ satisfy the conditions in
Section~\ref{sec:setup}. If $\Mbar \to \P^N$ is a projective imbedding and $L$
is a generic linear subspace of $\P^N$ such that $\dim(M\cap L)\ge 3$, then the
members of the diagram
$$
M\cap L \hookrightarrow M^c \cap L \hookrightarrow \Mbar \cap L
$$
and the restriction of $\V$ to $M^c\cap L$ also satisfy the conditions in
Section~\ref{sec:setup}.
\end{corollary}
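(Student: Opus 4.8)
The plan is to verify the six conditions of Section~\ref{sec:setup} in turn for the triple $M\cap L\hookrightarrow M^c\cap L\hookrightarrow\Mbar\cap L$ together with $\V|_{M^c\cap L}$, choosing $L$ outside a countable union of proper closed subvarieties of the Grassmannian of codimension-$c$ linear subspaces, so that one $L$ is simultaneously generic for every assertion below (this matters because condition (\ref{item6}) involves the infinitely many coverings $M_m$). Write $d=\dim M=\dim M^c=\dim\Mbar$ and $c=\codim L$; the hypothesis $\dim(M\cap L)\ge 3$ reads $c\le d-3$. Conditions (\ref{item1})--(\ref{item3}) are the easy ones: Bertini makes $M\cap L$ smooth; Theorem~\ref{thm:smt}, applied to the immersion $M\hookrightarrow\P^N$ (with the ``generic $L$'' clause, using $d-c\ge 3>1$), identifies $\pi_0$ and $\pi_1$ of $M\cap L$ with those of $M$, so $M\cap L$ is connected, hence irreducible, and it is quasi-projective; since $\Mbar$ is irreducible and $c\le d-1$, Bertini irreducibility gives that $\Mbar\cap L$ is irreducible of dimension $d-c$, and $M\cap L$ is a nonempty open, hence dense, subset of it, so $\Mbar\cap L$ is a projective completion of $M\cap L$ with intermediate open subvariety $M^c\cap L$; condition (\ref{item3}) is immediate, a restriction of a local system being a local system with the same fibres.

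For condition (\ref{item4}) I would use that generic linear sections preserve codimension: if $Z$ is a component of $\Mbar-M$ (resp.\ of $\Mbar-M^c$), then for generic $L$ either $Z\cap L=\emptyset$ or every component of $Z\cap L$ has dimension $\dim Z-c$, hence codimension $\codim_{\Mbar}Z\ge 2$ (resp.\ $\ge 3$) in $\Mbar\cap L$, using $\dim(\Mbar\cap L)=d-c$. Since $(\Mbar-M)\cap L=(\Mbar\cap L)-(M\cap L)$, and similarly with $M^c$ in place of $M$, the codimension bounds of condition (\ref{item4}) pass to the section. The same count shows that the codimension-$2$ strata of $(M^c\cap L)-(M\cap L)$ are precisely the sets $S\cap L$ with $S$ a codimension-$2$ stratum of $M^c-M$, the higher-codimension strata of $M^c-M$ and the components of $\Mbar-M^c$ contributing only in codimension $\ge 3$.

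The delicate point is condition (\ref{item5}), and this is the step I expect to be the main obstacle. I would deduce it from the fact that, for generic $L$, the normalization of $M^c\cap L$ is the preimage $\nu^{-1}(L)$ of $L$ under the normalization $\nu\colon\widetilde{M^c}\to M^c$: for generic $L$ this preimage is normal (iterating Seidenberg's Bertini theorem that a generic hyperplane section of a normal variety of dimension $\ge 2$ is normal, the dimension remaining $\ge d-c\ge 3$ throughout), connected of dimension $\ge 3$ by Theorem~\ref{thm:smt}, hence irreducible, and it maps finitely and birationally onto $M^c\cap L$ (the generic point of the irreducible variety $M^c\cap L$ lies in the locus over which $\nu$ is an isomorphism), so it is indeed the normalization. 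It follows that for $x\in L$ the number of branches of $M^c\cap L$ at $x$ equals the number of points of $\nu^{-1}(L)$ over $x$, which equals the number of points of $\nu^{-1}(x)$, i.e.\ the number of branches of $M^c$ at $x$. Now the unibranch locus of $M^c$ is open and, by hypothesis, contains the generic point $\eta_S$ of each codimension-$2$ stratum $S$ of $M^c-M$, hence a dense open subset of $\overline{\{\eta_S\}}$; for generic $L$ the generic point of $S\cap L$ falls into this open set, so $M^c$, and therefore $M^c\cap L$, is unibranch there.

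Finally, condition (\ref{item6}). Since $\pi_1(M\cap L,x_o)\to\pi_1(M,x_o)$ is an isomorphism (Theorem~\ref{thm:smt}, using $d-c\ge 3\ge 2$) compatibly with the monodromy of $\V$, the covering $(M\cap L)_m\to M\cap L$ is the pullback of $p_m\colon M_m\to M$ along $M\cap L\hookrightarrow M$; regarding $p_m$ as a morphism $M_m\to M\hookrightarrow\P^N$ with finite fibres, this says $(M\cap L)_m=p_m^{-1}(L)$, a generic linear section of $M_m$ of dimension $\dim M_m-c=d-c\ge 3$, hence connected (Theorem~\ref{thm:smt}) and smooth (Bertini), so irreducible. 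As $\dim M_m=d$ and $c\le d-3$, the corollary above — that a generic linear section of codimension $\le\dim X-3$ has the same Picard group as $X$ — applies with $X=M_m$ and $f=p_m$, giving $\Pic M_m\xrightarrow{\sim}\Pic(M\cap L)_m$. Since by hypothesis $\Pic M_m$ is finitely generated of rank $1$ for all $m\ge 1$, the same holds for $\Pic(M\cap L)_m$, establishing condition (\ref{item6}) and completing the plan.
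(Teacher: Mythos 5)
Your proof is correct, and it follows the route the paper intends: the paper states this corollary without proof as a ``direct consequence'' of Theorem~\ref{thm:smt} together with the preceding corollary on Picard groups of generic linear sections, which is exactly the machinery you deploy (in particular your treatment of condition~(\ref{item6}) via $(M\cap L)_m = p_m^{-1}(L)$, and your up-front passage to a very generic $L$ to handle all $m$ at once, are the right way to make the assertion precise). The only place where you use a tool the paper does not: for condition~(\ref{item5}) you argue that normalization commutes with generic linear sections and count points in normalization fibers, whereas the paper's own habit elsewhere (e.g.\ in the proof of the Strictness Theorem) is to invoke the local Lefschetz theorem of \cite{smt} on connectivity of links; both arguments are valid, and yours has the merit of being purely algebraic.
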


The following fact will be used in the proof the the Strictness Theorem.

\begin{corollary}
\label{cor:ample}
If the members of
$$
M \hookrightarrow M^c \hookrightarrow \Mbar
$$
satisfy the conditions in Section~\ref{sec:setup}, then $M$ contains a smooth
complete curve and every irreducible divisor in $M$ is ample.
\end{corollary}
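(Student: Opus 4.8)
The plan is to produce, by a single generic linear section, a smooth complete curve $C\subseteq M$ that meets $D$ in a nonempty finite set without being contained in it; the hypothesis that $\Pic M$ has rank one then forces $\cO_M(D)$ to be a positive rational multiple of an ample class, hence ample.

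First I would fix a projective imbedding $\Mbar\hookrightarrow\P^N$ and let $\Dbar$ be the closure of $D$ in $\Mbar$. Since each component of $\Mbar-M$ has codimension $\ge 2$ (condition~(\ref{item4})), $\Dbar$ is an irreducible divisor in $\Mbar$ and $\dim\bigl(\Dbar\cap(\Mbar-M)\bigr)\le \dim M-2$; because $M$ is smooth, $\mathrm{Sing}\,\Mbar\subseteq\Mbar-M$ also has codimension $\ge 2$. A dimension count together with Bertini's theorems (connectedness/irreducibility and smoothness, both valid in characteristic $0$) then shows that a generic linear subspace $L\subseteq\P^N$ of codimension $\dim M-1$ has the following properties: $C:=\Mbar\cap L$ is a smooth connected projective curve; $C$ avoids $\Mbar-M$ and hence lies in $M$; and $C\cap\Dbar$ is a nonempty finite subset of $D$ with $C\not\subseteq\Dbar$. (When $\dim M=1$ this is vacuous: $M=\P^1$ is already such a curve and $D$ is a nonzero effective sum of points.) This $C$ is the required smooth complete curve in $M$.

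Next I would use the rank-one hypothesis. The line bundle $A:=\cO_{\P^N}(1)|_M$ is ample on $M$, being the restriction of an ample bundle under a locally closed imbedding into $\P^N$, and $\deg A|_C=\deg C>0$, so $[A]$ is non-torsion in $\Pic M$. By condition~(\ref{item6}) with $m=1$ (so that $M_1=M$), $\Pic M$ is finitely generated of rank one; hence $\Pic M/\mathrm{torsion}\cong\Z$ and $[A]$ spans $\Pic M\otimes\Q$. Writing $[\cO_M(D)]=\lambda[A]$ in $\Pic M\otimes\Q$ and restricting to $C$ gives $\lambda\deg C=\deg\cO_M(D)|_C>0$, since $D$ restricts to a nonzero effective divisor on $C$; thus $\lambda>0$. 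Clearing denominators and absorbing the torsion of $\Pic M$, some positive tensor power of $\cO_M(D)$ is isomorphic to a positive power of $A$, which is ample; hence $\cO_M(D)$ is ample and $D$ is an ample divisor.

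The argument is essentially routine; the one point that requires care is the simultaneous verification that a generic linear curve $C$ (i) stays inside $M$ and misses $\mathrm{Sing}\,\Mbar$, (ii) is smooth and connected, and (iii) still crosses $D$ properly. All three follow from the codimension bounds in~(\ref{item4}); it is worth noting that the codimension of $\Mbar-M$ being $\ge 2$ — no more, no less — is exactly what makes (i) and (iii) compatible. One could instead invoke the Lefschetz theorem~\ref{thm:smt} to manufacture the complete curve, but the elementary Bertini argument seems cleaner here.
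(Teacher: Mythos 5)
Your proof is correct and follows essentially the same route as the paper's: a generic linear section of codimension $\dim M-1$ avoids $\Mbar-M$ (codimension $\ge 2$) and so gives a complete curve in $M$ meeting $\Dbar$ in positive degree, whence the rank-one Picard hypothesis forces $D$ to be ample. You simply spell out the Bertini smoothness/connectedness details and the positivity bookkeeping in $\Pic M\otimes\Q$ that the paper leaves implicit.
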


\begin{proof}
Fix a projective imbedding $f:\Mbar \hookrightarrow \P^N$. Since each component
of $\Mbar-M$ has codimension $\ge 2$, the intersection $M\cap L$ of a generic
linear subspace $L$ of $\P^N$ of codimension $\dim M - 1$ is a complete curve in
$M$. If $D$ is an irreducible divisor in $M$, then the image of its closure
$f(\Dbar)$ will have degree $L\cdot \fbar_\ast\Dbar$ in $\P^N$. This degree is
positive. Since $\Pic M$ is finitely generated of rank 1, $D$ is ample in $M$.
\end{proof}

\subsection{Dominant morphisms}

This section contains a few elementary results that are needed in the proof
of the strictness theorem.

The link of every stratum of a complex algebraic variety is connected if and
only if $X$ is unibranch. From this it follows that:

\begin{lemma}
\label{lem:surjective}
If $U$ is a Zariski open subset of an irreducible, unibranch variety
$X$, then the induced homomorphism $\pi_1(U,x_o) \to \pi_1(X,x_o)$ is
surjective. \qed
\end{lemma}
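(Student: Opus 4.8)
The plan is to deduce surjectivity from the quoted fact that links of strata of a complex algebraic variety are connected precisely when the variety is unibranch, by using stratified general position to push loops off the closed subvariety $Z:=X\setminus U$ while keeping them inside $U$. Since $X$ is irreducible and $U$ is a nonempty Zariski open set, $Z$ is a proper closed subvariety, so every component of $Z$ has complex codimension $\ge 1$, hence real codimension $\ge 2$. Fix a Whitney stratification of $X$ in which $Z$ is a union of strata, and let $\mathcal N$ be a neighbourhood of $Z$ carrying the Thom--Mather conical structure: stratum‑locally $\mathcal N$ looks like $S\times cL$, where $S$ is a stratum of $Z$ and $cL$ is the cone on its link $L$, so that $\mathcal N\cap U$ looks like $S\times(cL\smallsetminus\{\mathrm{apex}\})\simeq S\times L$. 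Because $X$ is unibranch, every such $L$ is connected; from the product description it then follows that the inclusion $\mathcal N\cap U\hookrightarrow\mathcal N$ is a bijection on $\pi_0$ (connectedness of $L$ kills the extra $\pi_0$) and is surjective on $\pi_1$ (the map $\pi_1(S)\times\pi_1(L)\to\pi_1(S)$ is a projection).

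With this in hand I would argue as follows. Let $\gamma\colon(S^1,*)\to(X,x_o)$ be a loop. After a small preliminary homotopy, fixing $*$ since $x_o\in U$ is in an open set, we may assume $\gamma^{-1}(Z)$ is finite and contained in the interiors of finitely many arcs of $S^1$ each mapping into $\mathcal N$, the remaining arcs mapping into $U$; thus $\gamma$ is a concatenation of arcs, each lying in $U$ or in $\mathcal N$, with all endpoints in $U\cap\mathcal N$. For each arc $\alpha$ that lies in $\mathcal N$, with endpoints $p,q\in U\cap\mathcal N$, the $\pi_0$‑bijectivity of $U\cap\mathcal N\hookrightarrow\mathcal N$ places $p$ and $q$ in the same component of $U\cap\mathcal N$, and the $\pi_1$‑surjectivity lets us replace $\alpha$, rel endpoints and within $\mathcal N$ (hence within $X$), by an arc $\alpha'$ lying in $U\cap\mathcal N$. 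Performing these replacements yields a loop $\gamma'$ in $U$ based at $x_o$ and homotopic to $\gamma$ in $X$, so $[\gamma]$ lies in the image of $\pi_1(U,x_o)\to\pi_1(X,x_o)$.

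The point that needs care, and which I expect to be the main obstacle, is the globalization of the local statement ``$\mathcal N\cap U\hookrightarrow\mathcal N$ is $\pi_0$‑bijective and $\pi_1$‑surjective'' when $Z$ is itself a singular subvariety with several strata of different dimensions: one must either invoke a sufficiently strong form of the conical‑neighbourhood theory to get this for $\mathcal N$ of all of $Z$ at once, or run an induction over the strata of $Z$ in order of increasing dimension, at each stage removing a closed stratum and using that the locus already treated is still unibranch with connected links. A clean alternative that sidesteps this is to note that $X$ is unibranch if and only if its normalization $\nu\colon\widetilde X\to X$ is a homeomorphism (it is finite and bijective); then $\nu^{-1}(U)$ is a Zariski open dense subset of the \emph{normal} variety $\widetilde X$, and surjectivity of $\pi_1(\nu^{-1}(U))\to\pi_1(\widetilde X)$ is the classical statement, which transports back to $\pi_1(U)\to\pi_1(X)$ through the homeomorphism $\nu$.
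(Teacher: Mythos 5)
Your main argument is exactly the one the paper intends: the lemma is stated there as an immediate consequence of the quoted fact that the links of all strata are connected precisely when $X$ is unibranch, and your stratum-by-stratum procedure for pushing loops off the real-codimension-$\ge 2$ set $X\setminus U$ through conical neighbourhoods is the standard way to make that deduction precise. Your normalization shortcut is also valid (a finite bijective morphism of complex varieties is proper, hence a homeomorphism in the classical topology), and it cleanly disposes of the globalization issue you flag.
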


The following is standard. Cf.\ \cite[p.~363]{bourbaki}.

\begin{lemma}
\label{lem:unibranch}
Every normal variety is unibranch. \qed
\end{lemma}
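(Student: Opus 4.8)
The plan is to unwind the definition of \emph{unibranch} from Section~\ref{sec:setup} and reduce to a statement about completions of local rings. Let $x$ be a point of the normal variety $X$. Since a normal variety has disjoint irreducible components, $R := \mathcal{O}_{X,x}$ is a normal local domain, and $X$ is unibranch at $x$ exactly when the analytic local ring $\mathcal{O}_{X,x}^{\mathrm{an}}$ is an integral domain. That ring is Noetherian and injects into its $\mathfrak{m}$-adic completion, which coincides with the completion $\widehat{\mathcal{O}}_{X,x}$ of the algebraic local ring $R$; so it suffices to prove that $\widehat{R}$ is a domain.

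First I would use the fact that $R$, being the local ring at a point of a variety over $\C$ (indeed of any scheme of finite type over a field), is an excellent local ring, so its formal fibers are geometrically normal. Together with the normality of $R$, this implies that $\widehat{R}$ is normal; this is the standard permanence of normality under completion for excellent rings (EGA IV, 7.8.3, or Matsumura's book). Now a Noetherian normal ring is a finite direct product of normal domains, one per connected component of its spectrum, and $\Spec\widehat{R}$ is connected because $\widehat{R}$ is local; hence $\widehat{R}$ is a domain. Therefore $\mathcal{O}_{X,x}^{\mathrm{an}}$ is a domain and $X$ is locally analytically irreducible at $x$. As $x$ was arbitrary, $X$ is unibranch. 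Alternatively, one may simply quote Zariski's theorem on the analytic irreducibility of a normal point, which is the classical form of precisely this assertion and already incorporates the excellence input.

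The only genuine point in the argument is the second step, the stability of normality under completion, which is where the excellence hypothesis (or at least the geometric normality of the formal fibers) really enters: there exist Noetherian normal local rings whose completions are not even reduced, so this part cannot be purely formal. Everything else — the comparison of the analytic local ring with the formal local ring, and the splitting of a Noetherian normal ring into domains along the components of its spectrum — is routine.
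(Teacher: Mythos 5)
Your argument is correct: it is the standard proof, via excellence of local rings of varieties and Zariski's theorem on analytic normality, of a fact the paper does not prove at all but simply cites from Bourbaki. One remark worth making: Bourbaki's notion of \emph{unibranch} on the cited page is the purely algebraic one (the integral closure of the reduced local ring is again local), for which normality is trivially sufficient, whereas the paper's working definition is ``locally analytically irreducible''; your proof establishes the analytic statement directly, correctly identifying that the real content is the permanence of normality under completion for excellent rings, so it in fact addresses the version of the lemma that the paper actually uses.
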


Suppose $D$ is an irreducible algebraic variety. Denote its normalization by
$\Dtilde \to D$. 

\begin{proposition}
\label{prop:normalization}
If $f:E\to D$ is a dominant morphism from a variety to $D$, then the image of
$\pi_1(E,e_o) \to \pi_1(D,x_o)$ contains a finite index subgroup of the image of
$\pi_1(\Dtilde,x_o) \to \pi_1(D,x_o)$. Here $e_o \in E$ is chosen so that
$x_o=f(e_o)$ is a smooth point of $D$.
\end{proposition}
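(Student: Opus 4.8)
The plan is to reduce the proposition to a statement about the monodromy action of the fundamental group of a Zariski-open subset of $D$ on the set of connected components of a generic fibre of $f$, which can be read off from the homotopy exact sequence of a fibre bundle. Two harmless reductions come first. Since $f$ is dominant, some irreducible component $E_1$ of $E$ dominates $D$; as we are free to place $e_o$ over any smooth point of $D$, we take $e_o\in E_1$, and then $\pi_1(E_1,e_o)\to\pi_1(D,x_o)$ factors through $\pi_1(E,e_o)\to\pi_1(D,x_o)$, so it suffices to treat $E_1$ and we may assume $E$ is irreducible. Secondly, as $x_o$ is a smooth --- hence normal --- point of $D$, the normalization $\nu:\Dtilde\to D$ restricts to an isomorphism over the smooth locus $D^{\mathrm{sm}}$, and $\Dtilde$, being normal, is unibranch (Lemma~\ref{lem:unibranch}); so by Lemma~\ref{lem:surjective}, for every dense open $D_0\subseteq D^{\mathrm{sm}}$ the map $\pi_1(\nu^{-1}(D_0))\to\pi_1(\Dtilde)$ is onto. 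Since $\nu$ maps $\nu^{-1}(D_0)$ isomorphically onto $D_0$, it follows that $\im\bigl(\pi_1(\Dtilde,x_o)\to\pi_1(D,x_o)\bigr)$ equals the image of $\iota_*:\pi_1(D_0,x_o)\to\pi_1(D,x_o)$, where $\iota:D_0\hookrightarrow D$ is the inclusion. Thus it is enough to exhibit a finite-index subgroup of $\iota_*\pi_1(D_0,x_o)$ inside $\im\bigl(\pi_1(E,e_o)\to\pi_1(D,x_o)\bigr)$.

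I would then choose $D_0$ well. Since $f$ is dominant, $f(E)$ contains a dense open subset of $D$ (Chevalley); and by the generic topological local triviality of morphisms of complex varieties (Verdier's theorem; cf.\ \cite{smt}), after shrinking we obtain a connected dense open $D_0\subseteq D^{\mathrm{sm}}\cap f(E)$ over which $f$ restricts to a topological fibre bundle $\phi:W\to D_0$ with $W:=f^{-1}(D_0)$. Because $E$ is irreducible, $W$ is a nonempty Zariski-open in $E$, hence connected; the fibre $F:=f^{-1}(x_o)\subseteq W$ is nonempty (as $x_o\in f(E)$) and, being a complex algebraic variety, has a finite number $r$ of connected components. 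Choose $e_o\in F$ and let $[F_0]\in\pi_0(F)$ denote its component.

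The conclusion then drops out of the homotopy exact sequence of $F\hookrightarrow W\xrightarrow{\phi}D_0$,
\[
\pi_1(W,e_o)\xrightarrow{\ \phi_*\ }\pi_1(D_0,x_o)\xrightarrow{\ \partial\ }\pi_0(F)\longrightarrow\pi_0(W)=\{\ast\}.
\]
By exactness $\im\phi_*=\partial^{-1}([F_0])$, and $\partial$ is the orbit map of $[F_0]$ for the monodromy action of $\pi_1(D_0,x_o)$ on the $r$-element set $\pi_0(F)$; hence $\partial^{-1}([F_0])$ is the stabilizer of $[F_0]$ and has index at most $r$ in $\pi_1(D_0,x_o)$. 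Therefore $\iota_*(\im\phi_*)$ has finite index in $\iota_*\pi_1(D_0,x_o)$, and since $\iota\circ\phi$ is the restriction of $f$ to $W$, which factors as $W\hookrightarrow E\xrightarrow{f}D$, we get $\iota_*(\im\phi_*)\subseteq\im\bigl(\pi_1(E,e_o)\to\pi_1(D,x_o)\bigr)$. This is the desired finite-index subgroup.

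The one genuine obstacle is that $f$ need not be proper, so Ehresmann's fibration theorem is unavailable and one must appeal to the generic topological triviality of arbitrary morphisms to produce the bundle $\phi$; granted that, the phenomenon that the generic fibre of $f$ may be disconnected --- precisely what prevents the image from being all of $\iota_*\pi_1(D_0,x_o)$ --- is accounted for automatically by the orbit--stabilizer estimate, and the remaining manipulations (passing to a dominating component, identifying the image of $\pi_1(\Dtilde)$ with that of $\pi_1(D_0)$) are routine.
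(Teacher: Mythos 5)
Your argument is correct and follows essentially the same route as the paper's proof: both rest on generic topological local triviality of $f$ over a dense open subset, the finiteness of the set of connected components of a fibre, and Lemmas~\ref{lem:surjective} and \ref{lem:unibranch} to transfer the conclusion from the open subset back to $\Dtilde$. The only cosmetic difference is that the paper passes to the normalization $\Etilde$ of $E$ and lifts $f$ to $\ftilde:\Etilde\to\Dtilde$, whereas you reduce to a dominating irreducible component of $E$ and identify $\im\bigl(\pi_1(\Dtilde)\to\pi_1(D)\bigr)$ with $\im\bigl(\pi_1(D_0)\to\pi_1(D)\bigr)$ directly; your orbit--stabilizer bookkeeping via the homotopy exact sequence just makes explicit the step the paper states in one line.
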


\begin{proof}
Let $\Etilde \to E$ be the normalization of $E$. The normalization of $D$ in the
function field of $\Etilde$ is its normalization $\Dtilde$. Since the diagram
$$
\xymatrix{
\Etilde \ar[r]\ar[d]_\ftilde & E \ar[d]^f \cr
\Dtilde \ar[r] & D
}
$$
commutes, it suffices to show that the image of the homomorphism
$\pi_1(\Etilde,e_o) \to \pi_1(\Dtilde,x_o)$ has finite index in
$\pi_1(\Dtilde,x_o)$. There is a smooth Zariski open subset $U$ of $\Dtilde$ and
a smooth Zariski open subset $V$ of $\ftilde^{-1}(U)$ such that the restriction
of $\ftilde$ to $V$ is a locally trivial fiber bundle. Since the number of
connected components of each fiber is finite, the image of $\pi_1(V,e_o) \to
\pi_1(U,x_o)$ has finite index in $\pi_1(U,x_o)$. The result now follows as the
homomorphisms $\pi_1(V,e_o) \to \pi_1(\Etilde,e_o)$ and $\pi_1(U,x_o) \to
\pi_1(\Dtilde)$ are surjective by Lemma~\ref{lem:unibranch} and
Lemma~\ref{lem:surjective}.
\end{proof}

Specializing to the case where $D$ is smooth, we obtain the following special
case.

\begin{corollary}
\label{cor:reduction}
Suppose that $Y$ is a smooth variety and that $f : X \to Y$ is a dominant
morphism. If $E$ is an irreducible divisor in $X$ and if the restriction of $f$
to $E$ is dominant, then the image of $\pi_1(E,e_o) \to \pi_1(Y,y_o)$ has finite
index in $\pi_1(Y,y_o)$.
\end{corollary}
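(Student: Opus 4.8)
The plan is to deduce this directly from Proposition~\ref{prop:normalization} by exploiting the hypothesis that $Y$ is smooth. First I would observe that since $Y$ is smooth, it is normal, hence unibranch by Lemma~\ref{lem:unibranch}. Now consider the composite $E \hookrightarrow X \xrightarrow{f} Y$, and let $g : E \to Y$ denote its restriction, which is dominant by hypothesis. Applying Proposition~\ref{prop:normalization} with the roles reassigned --- take ``$D$'' to be $Y$ itself and ``$E$'' to be our divisor $E$, with the dominant morphism $g : E \to Y$ --- we conclude that the image of $\pi_1(E,e_o) \to \pi_1(Y,y_o)$ contains a finite index subgroup of the image of $\pi_1(\widetilde{Y},y_o) \to \pi_1(Y,y_o)$.

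The remaining point is to identify that latter image. Since $Y$ is smooth, it is its own normalization, so $\widetilde{Y} = Y$ and the map $\pi_1(\widetilde{Y},y_o) \to \pi_1(Y,y_o)$ is the identity, with image all of $\pi_1(Y,y_o)$. Hence the image of $\pi_1(E,e_o) \to \pi_1(Y,y_o)$ contains a finite index subgroup of $\pi_1(Y,y_o)$, which is exactly the claim. One small caveat to check is the basepoint condition in Proposition~\ref{prop:normalization}: it requires that $y_o = g(e_o)$ be a smooth point of $Y$; but every point of $Y$ is smooth, so this is automatic, and we are free to choose $e_o \in E$ to be any point where $g$ is defined (i.e.\ any point of $E$), adjusting $y_o$ accordingly.

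There is essentially no obstacle here --- the corollary is a formal consequence of the proposition once one notes that smoothness collapses the normalization to the identity. The only thing worth being careful about is that $E$ need not be smooth or normal, so one cannot apply any homotopy-bundle argument directly to $E \to Y$; it is precisely the generality of Proposition~\ref{prop:normalization} (which already absorbs the normalization of the source via the commuting square with $\widetilde{E}$) that makes the deduction immediate. So the proof is just: $Y$ smooth $\implies$ $Y$ normal $\implies$ $\widetilde{Y} = Y$, then quote Proposition~\ref{prop:normalization}.
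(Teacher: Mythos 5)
Your proposal is correct and is exactly the paper's argument: the paper derives Corollary~\ref{cor:reduction} in one line as the special case of Proposition~\ref{prop:normalization} in which the target is smooth, hence normal, hence equal to its own normalization, so the image of $\pi_1(E,e_o)$ contains a finite index subgroup of all of $\pi_1(Y,y_o)$. Your additional remarks on basepoints and on why $E$ itself need not be normal are accurate but not needed beyond what you already say.
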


\subsection{Lean morphisms}

The Strictness Theorem applies to more general situations than those described
in Section~\ref{sec:strictness}. In order to describe them, we will need the
following definition. Recall that, by a divisor in an irreducible variety $X$,
we mean a closed subvariety of pure codimension $1$.

\begin{definition}
\label{def:lean}
A dominant morphism $F : X \to Y$ is {\em lean} if the codimension of the
inverse image $F^{-1}(Z)$ of an irreducible closed subvariety $Z$ of $Y$ is a
divisor in $X$ implies that $Z$ is a divisor in $Y$.
\end{definition}

Every dominant morphism $F : X \to Y$ all of whose closed fibers have the same
dimension, is lean. In particular, flat morphisms are lean.

\begin{lemma}
\label{lem:families}
Suppose that the strictness theorem (Theorem~\ref{thm:strictness}) holds for the
local system $\V$ over $M$. If $h : X \to M$ is a lean morphism, then the
strictness theorem holds for the pullback of $\V$ to $X$. That is, if $D$ is an
irreducible divisor in $X$, and if $E \to D$ is a dominant morphism from a
smooth variety to $D$, then the image of $\pi_1(E,e_o)^\wedge \to \G_o^\wedge$
has finite index in the image of $\pi_1(M,x_o)^\wedge \to \G_o^\wedge$.
\end{lemma}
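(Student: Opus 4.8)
The goal is to reduce the Strictness Theorem for the pullback $h^\ast\V$ on $X$ to the Strictness Theorem for $\V$ on $M$, which we are allowed to assume. Let $D$ be an irreducible divisor in $X$ and $E \to D$ a dominant morphism from a smooth variety. The first step is to dispose of the ``vertical'' case: if $D$ is contained in a fiber of $h$, or more precisely if $h(D)$ is not dense in $M$, then $h(D)$ is contained in a proper closed subvariety $Z$ of $M$; since $h$ is lean and $h^{-1}(Z) \supseteq D$ contains a divisor, $Z$ must itself be a divisor in $M$. So in the vertical case the closure $\overline{h(D)}$ is an irreducible divisor $D'$ in $M$, and we may take a generic point where $D'$ is smooth; the composite $E \to D \to D'$ is then a dominant morphism to a divisor in $M$, and we invoke the Strictness Theorem for $\V$ directly: the image of $\pi_1(E,e_o)^\wedge \to \G_o^\wedge$ (which factors through $\pi_1(D',x_o)^\wedge$) has finite index in the image of $\pi_1(M,x_o)^\wedge \to \G_o^\wedge$. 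Here leanness is exactly what guarantees we land on a \emph{divisor} rather than a higher-codimension subvariety, for which the Strictness Theorem is not available.

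The second step is the ``horizontal'' case, where $h|_D : D \to M$ is dominant. Here the cleanest route is to apply Corollary~\ref{cor:reduction}: take $Y = M$, $f = h : X \to M$, and note that $E' := D$ is an irreducible divisor in $X$ whose image under $h$ is dense in $M$. Since $M$ is smooth, Corollary~\ref{cor:reduction} gives that the image of $\pi_1(D,d_o) \to \pi_1(M,x_o)$ has finite index in $\pi_1(M,x_o)$, where $d_o$ is chosen over a smooth point $x_o$ of $M$. Passing to profinite completions (a finite-index subgroup of $\G$ has image of finite index in $\G^\wedge$), the image of $\pi_1(D,d_o)^\wedge \to \pi_1(M,x_o)^\wedge$ has finite index. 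Composing with $\pi_1(M,x_o)^\wedge \to \G_o^\wedge$, the image of $\pi_1(D,d_o)^\wedge \to \G_o^\wedge$ has finite index in the image of $\pi_1(M,x_o)^\wedge \to \G_o^\wedge$. Finally, by Proposition~\ref{prop:normalization} applied to $E \to D$ (choosing $d_o$ a smooth point of $D$), the image of $\pi_1(E,e_o) \to \pi_1(D,d_o)$ contains a finite-index subgroup of the image of $\pi_1(\widetilde{D},d_o) \to \pi_1(D,d_o)$; but that latter image is itself of finite index by Lemma~\ref{lem:surjective} applied after resolving, so in fact the image of $\pi_1(E,e_o)$ is finite index in $\pi_1(D,d_o)$ up to the normalization. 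Chasing this through the completions, the image of $\pi_1(E,e_o)^\wedge \to \G_o^\wedge$ has finite index in the image of $\pi_1(D,d_o)^\wedge \to \G_o^\wedge$, hence in the image of $\pi_1(M,x_o)^\wedge \to \G_o^\wedge$. Combining the two cases completes the proof.

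The main obstacle, and the only place the hypothesis ``lean'' genuinely enters, is the vertical case: one must know that an irreducible divisor $D \subseteq X$ that fails to dominate $M$ has image whose closure is \emph{exactly codimension one} in $M$, so that the given Strictness Theorem (which is stated only for divisors) applies after composing $E \to D \to \overline{h(D)}$. Without leanness, $\overline{h(D)}$ could have codimension $\ge 2$, and there is no input available for that situation. A secondary technical point to be careful about is basepoint bookkeeping: one needs the basepoint $x_o \in M$ to be simultaneously a smooth point of $M$, a point over which the relevant fibration in Proposition~\ref{prop:normalization} is well-behaved, and (in the vertical case) a smooth point of the divisor $\overline{h(D)}$; all of these are generic conditions, so a suitable $x_o$ exists, but this should be noted explicitly. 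Everything else is a routine diagram chase through the already-established Corollary~\ref{cor:reduction} and Proposition~\ref{prop:normalization}, together with the elementary fact that finite-index subgroups remain finite-index after profinite completion.
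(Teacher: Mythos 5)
Your overall strategy is the same as the paper's: split into the case where $D$ dominates $M$ and the case where it does not, and use leanness only to guarantee that in the latter case $\overline{h(D)}$ is a divisor of $M$, so that the hypothesis (the Strictness Theorem for $\V$ on $M$) applies to the dominant composite $E \to D \to \overline{h(D)}$. That ``vertical'' case is correct and matches the paper.

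There is, however, a genuine error in your ``horizontal'' case. You route the argument through $\pi_1(D,d_o)$: you get finite index for the image of $\pi_1(D)$ in $\pi_1(M)$, and then try to compare $\pi_1(E)$ with $\pi_1(D)$ via Proposition~\ref{prop:normalization}. But that proposition only says the image of $\pi_1(E,e_o)\to\pi_1(D,d_o)$ contains a finite-index subgroup of the image of $\pi_1(\Dtilde,d_o)\to\pi_1(D,d_o)$, and your claim that this latter image ``is itself of finite index by Lemma~\ref{lem:surjective} applied after resolving'' is false in general: $\Dtilde\to D$ is not an open immersion and $D$ (a divisor in $X$, possibly with self-intersections) need not be unibranch, so $\pi_1(\Dtilde)\to\pi_1(D)$ can have image of infinite index (already for a nodal rational curve, where $\pi_1(D)\cong\Z$ and $\pi_1(\Dtilde)$ is trivial). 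This is precisely the normalization difficulty the paper highlights in the introduction and which the whole Strictness Theorem is built to circumvent; it cannot be waved away here. The fix is short and is what the paper does: in the horizontal case the composite $E\to D\to M$ is itself dominant onto the \emph{smooth} variety $M$, so Proposition~\ref{prop:normalization} (equivalently Corollary~\ref{cor:reduction}), applied with $M$ as the target and $\widetilde{M}=M$, gives directly that the image of $\pi_1(E,e_o)\to\pi_1(M,x_o)$ has finite index in $\pi_1(M,x_o)$ --- no passage through $\pi_1(D)$ is needed or possible. With that substitution your proof is complete.
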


\begin{proof}
Since $h$ is lean, the image of $D$ in $M$ has codimension $\le 1$. When $D \to
M$ is dominant, so is $E\to M$. Corollary~\ref{cor:reduction} implies that the
the image of $\pi_1(E,e_o)$ in $\pi_1(M,x_o)$ has finite index in
$\pi_1(M,x_o)$, which proves the result in this case. If the closure $F$ of the
image of $D$ in $M$ is a divisor, then $E\to F$ is dominant so that the
hypothesis that the strictness theorem holds for $M$ implies that the image of
$$
\pi_1(E,e_o)^\wedge \to \pi_1(M,x_o)^\wedge \to \G_o^\wedge
$$
has finite index in the image of $\pi_1(M,x_o)^\wedge \to \G_o^\wedge$.
\end{proof}

The proof of the following proposition and the lemma were contributed by the
referee in response to a question in an earlier version of this paper. They
guarantee that generic hypersurface sections of a flat family of varieties are
lean.

\begin{lemma}
If $Y$ is an algebraic subvariety of $\P^N$ of positive dimension $d$, then the
locus in $\P(H^0(\P^N,\cO(e)))$ of hypersurfaces of degree $e$ that contain $Y$
has codimension at least $e^d/d!$.
\end{lemma}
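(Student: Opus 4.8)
The plan is to bound the dimension of the space of degree-$e$ hypersurfaces through $Y$ by producing enough points of $Y$ that impose independent conditions on $H^0(\P^N,\cO(e))$. First I would choose a smooth point $y_0$ of $Y$ and work in an affine chart $\A^N \subset \P^N$ centered at $y_0$, so that degree-$\le e$ polynomials in $N$ variables restrict to the relevant linear system. Since $Y$ has dimension $d$, a generic linear projection $\pi : Y \to \A^d$ near $y_0$ is dominant and generically finite; pulling back coordinate functions $t_1,\dots,t_d$ on $\A^d$ gives functions on $Y$ that are algebraically independent in the local ring of $Y$ at $y_0$. The key point is then to count monomials: the monomials $t_1^{a_1}\cdots t_d^{a_d}$ with $a_1+\cdots+a_d \le e$ are linearly independent as functions on $Y$ (they are linearly independent in the local ring, hence on a neighborhood in $Y$, hence on $Y$ by irreducibility), and each such monomial, pulled back via $\pi$ from a linear projection, is (a polynomial of degree $\le e$ in the ambient coordinates) — more precisely, one can realize each as the restriction to $Y$ of a degree-$\le e$ form on $\A^N$.

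The second step is to turn "these restrictions are linearly independent functions on $Y$'' into "these impose independent conditions.'' The restriction map $H^0(\P^N,\cO(e)) \to H^0(Y,\cO_Y(e))$ has image of dimension at least the number of linearly independent functions we produced, namely $\binom{e+d}{d}$. Hence the kernel — which is exactly the space of degree-$e$ forms vanishing on $Y$, i.e. the cone over the locus of hypersurfaces containing $Y$ — has codimension at least $\binom{e+d}{d}$ in $H^0(\P^N,\cO(e))$. Passing to projective space subtracts $1$ from both dimensions, leaving the codimension bound $\binom{e+d}{d} - 1$ (or one can phrase the count so the projectivization is harmless). Finally I would use the crude estimate $\binom{e+d}{d} = \frac{(e+1)(e+2)\cdots(e+d)}{d!} \ge \frac{e^d}{d!}$, which gives the stated bound $e^d/d!$; the $-1$ from projectivizing is absorbed since $\binom{e+d}{d}-1 \ge e^d/d!$ whenever $d \ge 1$, as $\binom{e+d}{d}$ strictly exceeds $e^d/d!$.

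The main obstacle is the bookkeeping in the first step: one must be careful that the monomials in the projected coordinates, which a priori are only \emph{functions} on $Y$, actually arise from globally-defined sections of $\cO_{\P^N}(e)$, and that they are genuinely linearly independent \emph{on all of $Y$} rather than merely near $y_0$. The cleanest way is probably to avoid projections entirely: pick $y_0 \in Y$ smooth, and among the ambient coordinate functions (in an affine chart) choose $d$ of them, say $x_1,\dots,x_d$, whose differentials restrict to a basis of $T^*_{y_0}Y$; then the monomials $x_1^{a_1}\cdots x_d^{a_d}$ with $\sum a_i \le e$ are restrictions of degree-$\le e$ monomials on $\A^N$, and their linear independence on $Y$ follows because a nontrivial linear relation among them would be a nonzero polynomial in $x_1,\dots,x_d$ vanishing on $Y$, contradicting that $dx_1,\dots,dx_d$ are independent on $T_{y_0}Y$ (for the leading part) — or more simply, contradicting that $Y$, being irreducible of dimension $d$, cannot lie in a hypersurface cut out by a polynomial in fewer-than-faithful coordinates once we know the $x_i$ restrict to a local coordinate system. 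Making this last implication fully rigorous — that algebraic independence of the $x_i|_Y$ in the local ring at $y_0$ forces linear independence of all the bounded-degree monomials as global functions — is the one spot requiring genuine care, but it is a standard consequence of $Y$ being integral and the $x_i|_Y$ forming a transcendence basis of the function field $k(Y)$ over $k$.
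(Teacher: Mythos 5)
Your proof is correct, and it rests on the same essential count as the paper's: a smooth point $y$ of the $d$-dimensional variety $Y$, together with $d$ local coordinates there, forces the restriction map $H^0(\P^N,\cO(e)) \to H^0(Y,\cO_Y(e))$ to have rank at least $\binom{d+e}{e} \ge e^d/d!$, so the kernel --- whose projectivization is the locus of hypersurfaces containing $Y$ --- has at least that codimension. The implementations differ in one respect worth noting. The paper stays entirely local: it observes that $H^0(\P^N,\cO(e)) \to H^0\big(\Spec\cO_{\P^N,y}/\m_{\P^N,y}^{e+1},\cO(e)\big)$ is an isomorphism and that the further map to $H^0\big(\Spec\cO_{Y,y}/\m_{Y,y}^{e+1},\cO(e)\big)$ is surjective because $Y$ is smooth at $y$; the target has dimension exactly $\binom{d+e}{e}$ since $\cO_{Y,y}$ is regular of dimension $d$. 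This sidesteps precisely the point you flag as delicate --- promoting local data at $y_0$ to global linear independence of the monomials $x_1^{a_1}\cdots x_d^{a_d}$ on all of $Y$ --- because vanishing of the full $e$-jet at a single smooth point already imposes $\binom{d+e}{e}$ independent linear conditions. Your global route does go through (in characteristic zero, coordinates whose differentials span $T^\ast_{y_0}Y$ restrict to algebraically independent elements of the function field, so the bounded-degree monomials are linearly independent there), but the jet formulation reduces the argument to a two-line dimension count with nothing left to verify. Both versions handle the projectivization the same way: the locus is the projectivization of a linear subspace, so its codimension in $\P(H^0(\P^N,\cO(e)))$ equals the codimension of that subspace and no correction term is needed.
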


\begin{proof}
Replace $Y$ by $Y^\red$ if necessary. Let $y$ be a smooth point of $Y$. Since
$\dim Y = d$,
$$
\dim H^0\big(\Spec\cO_{Y,y}/\m_{Y,y}^{e+1},\cO(e)\big)
= \binom{d+e}{e} \ge e^d/d!
$$
The result follows as the composite of the maps
$$
\xymatrix@C=18pt{
H^0(\P^N,\cO(e)) \ar[r]^(0.35)\simeq &
H^0\big(\Spec\cO_{\P^N\!,y}/\m_{\P^N\! ,y}^{e+1},\cO(e)\big) \ar[r] &
H^0\big(\Spec\cO_{Y,y}/\m_{Y,y}^{e+1},\cO(e)\big)
}
$$
is surjective.
\end{proof}

\begin{proposition}
\label{prop:referee}
Suppose that $X$ and $T$ are quasi-projective varieties. Fix an imbedding $X\to
\P^N$. If $f:X\to T$ is a dominant morphism, all of whose fibers have the same
dimension $d>0$, then there exists $e>0$ such that for a generic hypersurface
$Y$ of $\P^N$ of degree $e$, the restriction $f_S : S \to T$ of $f$ to $S:=
Y\cap X$ is a dominant morphism each of whose fibers has dimension $d-1$. In
particular, $f_S$ is lean.
\end{proposition}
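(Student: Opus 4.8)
The plan is to use the preceding lemma on the codimension of the locus of hypersurfaces containing a given positive-dimensional subvariety, combined with a dimension count over the base $T$. First I would fix the imbedding $X\hookrightarrow\P^N$ and consider the closure $\Xbar$ of $X$ in $\P^N$; since we only care about the generic behaviour of $f_S$, we may freely shrink $T$ to a dense open subset over which $f$ is ``nice'' — in particular we may assume $T$ is irreducible (treating each component separately) and that the closure $\overline{f^{-1}(t)}$ in $\P^N$ of a general fiber, and indeed of every fiber, still has dimension $d>0$, so each fiber is a positive-dimensional subvariety of $\P^N$ to which the lemma applies. The key point is then to choose the degree $e$ large enough — any $e$ with $e^d/d! > \dim T$ will do — so that for a fixed fiber $F=f^{-1}(t)$, the hypersurfaces of degree $e$ containing $F$ form a subvariety of $\P(H^0(\P^N,\cO(e)))$ of codimension $> \dim T$.

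Next I would run the incidence-variety argument. Inside $\P(H^0(\P^N,\cO(e)))\times T$ consider the locus $I$ of pairs $(Y,t)$ such that $Y$ contains the whole fiber $f^{-1}(t)$ (equivalently, $Y\cap X$ fails to cut down that fiber's dimension). The lemma says each fibre of the first projection restricted over a point $t\in T$, i.e.\ the slice $I_t\subset\P(H^0(\P^N,\cO(e)))$, has codimension at least $e^d/d! > \dim T$; hence $\dim I \le \dim T + \dim\P(H^0(\P^N,\cO(e))) - e^d/d! < \dim\P(H^0(\P^N,\cO(e)))$, so the image of $I$ under projection to $\P(H^0(\P^N,\cO(e)))$ is not dense. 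Therefore a generic degree-$e$ hypersurface $Y$ contains no full fiber of $f$; for such $Y$, every nonempty fiber of $f_S : S = Y\cap X \to T$ is a hypersurface section of the corresponding fiber $f^{-1}(t)$ that does not contain it, hence by Krull's principal ideal theorem has pure dimension $d-1$ (using that $f^{-1}(t)$, after the shrinking of $T$, may be assumed pure-dimensional, or else arguing componentwise). It remains to check that $f_S$ is still dominant: a generic hypersurface meets the generic fiber $f^{-1}(\eta)$ (which is positive-dimensional over the generic point $\eta$ of $T$) in a nonempty set, so $S \to T$ dominates $T$. Finally, a morphism all of whose fibers have the same dimension $d-1$ is lean by the remark immediately following Definition~\ref{def:lean}, which gives the last assertion.

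The main obstacle I anticipate is not the dimension count itself but the bookkeeping needed to make ``all of whose fibers have the same dimension'' survive the passage to the hypersurface section — the hypothesis on $f$ is about all closed fibers, not just the generic one, so one must be careful that shrinking $T$ is legitimate (it is, since leanness and the conclusion are statements about a generic hypersurface and we may replace $T$ by a dense open set and $X$ by its preimage) and that Krull's principal ideal theorem is being applied to each fiber $f^{-1}(t)$, which requires knowing those fibers are equidimensional of dimension $d$ — exactly the standing hypothesis. A secondary technical point is ensuring the lemma is applicable fiberwise: its proof needs $f^{-1}(t)$ to have a smooth point, so one should either restrict to the open locus where fibers are generically reduced, or invoke the lemma for $\big(f^{-1}(t)\big)^{\red}$ as the lemma's proof already allows. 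Once these normalizations are in place the argument is a routine incidence-variety computation.
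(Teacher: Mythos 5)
Your proposal is correct and follows essentially the same route as the paper: the same incidence variety of pairs $(Y,t)$ with $Y\supseteq f^{-1}(t)$, the same codimension bound from the preceding lemma, and the same choice of $e$ with $e^d/d!>\dim T$ to make the projected locus a proper subvariety. The extra details you supply (Krull's principal ideal theorem for the fiber dimension, the dominance check, and reducing to $(f^{-1}(t))^{\red}$) are exactly what the paper compresses into ``which implies the result.''
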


\begin{proof}
For $e>0$, let $Z$ be the subvariety of $\P\big(H^0(\P^N,\cO(e))\big)\times T$
consisting of pairs $(Y,t)$ where the degree $d$ hypersurface $Y$ contains the
fiber of $f$ over $t\in T$. The lemma above implies that $Z$ has codimension
$\ge e^d/d!$. Denote by $W$ the Zariski closure of the image of $Z$ under the
projection of $\P\big(H^0(\P^N,\cO(e))\big)\times T$ onto its first factor. This
has codimension $\ge e^d/d! - \dim T$. If we choose $e$ such that $e^d/d! > \dim
T$, then $W$ will be a proper subvariety of $\P\big(H^0(\P^N,\cO(e))\big)$,
which implies the result.
\end{proof}

By taking iterated generic hypersurface sections, we conclude:

\begin{corollary}
\label{cor:generic_lean}
Suppose that $X$ and $T$ are quasi-projective varieties. Fix an imbedding $X\to
\P^N$. If $f:X\to T$ is a dominant morphism, all of whose fibers have the same
dimension $d>0$, then for a section of $X$ by a generic complete intersection of
codimension $c \le \codim T$ of multi-degree $(e_1,\dots,e_c)$, where $0 \ll
e_1\ll e_2 \ll \dots \ll e_c$, the restriction $f_S : S \to T$ of $f$ to $S:=
L\cap X$ is a dominant morphism each of whose fibers has dimension $d-c$. In
particular, $f_S$ is lean.
\end{corollary}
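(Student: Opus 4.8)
The plan is to prove the corollary by induction on $c$, applying Proposition~\ref{prop:referee} once at each step and keeping $T$ fixed throughout. The base case $c=0$ is the hypothesis on $f:X\to T$ itself. For the inductive step, suppose that a section $S_{i-1}:=L_{i-1}\cap X$ has already been produced, where $L_{i-1}$ is a generic complete intersection of codimension $i-1$ and multi-degree $(e_1,\dots,e_{i-1})$, and that the restriction $f_{i-1}:=f|_{S_{i-1}}:S_{i-1}\to T$ is dominant with every fiber of dimension $d-(i-1)$.

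Now $S_{i-1}$ is a quasi-projective variety carrying the restriction of the given imbedding $X\subseteq\P^N$, and, for $i\le c$, the fiber dimension $d-i+1$ of $f_{i-1}$ is still positive (this is exactly what the bound $c\le\codim T$ buys us: the fiber dimension of the $i$-th partially cut family stays positive for $1\le i\le c$). Hence Proposition~\ref{prop:referee} applies to $f_{i-1}$ and furnishes an integer $e_i>0$ such that, for a generic hypersurface $Y_i\subseteq\P^N$ of degree $e_i$, the restriction of $f_{i-1}$ to $S_i:=Y_i\cap S_{i-1}$ is dominant with every fiber of dimension $d-i$. Setting $L:=Y_1\cap\dots\cap Y_c$ and $S:=S_c=L\cap X$, after $c$ steps we obtain that $f_S:=f|_S:S\to T$ is dominant with all fibers of dimension $d-c$; it is therefore lean by the remark following Definition~\ref{def:lean}.

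Two points about the statement deserve comment. The chain $0\ll e_1\ll e_2\ll\dots\ll e_c$ simply records that the degree produced at the $i$-th stage by Proposition~\ref{prop:referee} depends on $S_{i-1}$, hence on the earlier choices $e_1,\dots,e_{i-1}$, and cannot be fixed in advance; all that is asserted is that an admissible, rapidly increasing tuple exists. Likewise ``generic complete intersection'' is to be read inductively: once a generic $(Y_1,\dots,Y_{i-1})$ has been fixed, the hypersurfaces $Y_i$ for which $f|_{Y_i\cap S_{i-1}}$ has the required equidimensionality form a dense Zariski-open subset of $\P(H^0(\P^N,\cO(e_i)))$, so the combined condition on $(Y_1,\dots,Y_c)$ is satisfied on a dense open subset of $\prod_{i=1}^c\P(H^0(\P^N,\cO(e_i)))$. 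I do not expect any genuine obstacle in this argument; the only point requiring attention is precisely this bookkeeping, together with the elementary verification that the bound on $c$ keeps the partially cut family's fibers positive-dimensional at each of the $c$ stages, which is what licenses the repeated application of Proposition~\ref{prop:referee}.
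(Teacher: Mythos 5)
Your proof is correct and is exactly the argument the paper intends: the paper gives no proof beyond the phrase ``by taking iterated generic hypersurface sections,'' and your induction, applying Proposition~\ref{prop:referee} once per step while checking that the fiber dimension stays positive, is precisely that iteration spelled out. The bookkeeping remarks about the meaning of $0\ll e_1\ll\dots\ll e_c$ and of genericity are accurate and harmless.
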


\section{Moduli spaces and their Baily-Borel-Satake Compactifications}
\label{sec:satake}

Suppose that $g\ge 1$ and that $\ell\ge 1$. Following work of Satake
\cite{satake}, Baily and Borel \cite{bb} constructed a minimal compactification
$X^\ast$ of each locally symmetric variety $X$. We will call $X^\ast$ the {\em
Satake compactification} of $X$.   It is a normal complex projective algebraic
variety. The Satake compactification $\A_g^\ast[\ell]$ of $\A_g[\ell]$ has the
property that its boundary $\A^\ast_g[\ell]-\A_g[\ell]$ has complex codimension
$g$.

The only imbeddings of $\A_g[\ell]$ into projective space that we will consider
are ones that extend to imbeddings of $\A^\ast_g[\ell]$. All such imbeddings are
given by automorphic forms of a sufficiently high weight. By a linear section of
$\A_g[\ell]$, we mean a linear section with respect to such an imbedding.

\begin{proposition}
\label{prop:M-satake}
If $g\ge 2$ and $\ell \ge 3$, then $\M_g[\ell]$ has a projective completion
$\M^\ast_g[\ell]$ such that the period mapping $f : \M_g[\ell] \to \A_g[\ell]$
extends to a finite morphism
$$
\fbar : \M^\ast_g[\ell] \to \A^\ast_g[\ell].
$$
When $g\ge 3$, the subvariety $\fbar^{-1}(\A^\ast_g[\ell]-\A_g[\ell])$ of
$\M^\ast_g[\ell]$ has codimension $3$ and $\M_g^\ast[\ell]$ is unibranch at the
generic point of each codimension $2$ stratum of $\M_g^\ast[\ell]-\M_g[\ell]$.

\end{proposition}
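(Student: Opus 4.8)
\noindent\emph{Proof proposal.} The plan is to realize $\M_g^\ast[\ell]$ through the extended Torelli morphism and a Stein factorization, and then to read the codimension and unibranch statements off the behaviour of that morphism along the boundary.

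First I would invoke the classical extended period map: for $\ell\ge 3$ the Deligne--Mumford--Knudsen compactification $\overline{\M}_g[\ell]$ is a normal projective variety, and the period map $f:\M_g[\ell]\to\A_g[\ell]$ extends to a morphism $\tau:\overline{\M}_g[\ell]\to\A^\ast_g[\ell]$ which sends a stable curve $C$ to the point of $\A_{g-t}\subset\A_g^\ast[\ell]$ determined by the abelian part of $\Pic^0 C$ together with its induced principal polarization and level structure, where $t$ is the first Betti number of the dual graph of $C$. (That $\tau$ is a morphism, and not merely a map on points, follows from the valuative criterion together with semistable reduction.) I would then take $\M_g^\ast[\ell]$ to be the Stein factorization of $\tau$, say $\overline{\M}_g[\ell]\to\M_g^\ast[\ell]\to\A_g^\ast[\ell]$ with first map $\sigma$ and second map $\fbar$. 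By construction $\fbar$ is finite, and $\M_g^\ast[\ell]$ is a normal projective variety since $\overline{\M}_g[\ell]$ is normal. To see that $\M_g[\ell]$ sits inside $\M_g^\ast[\ell]$ as a Zariski open subvariety on which $\fbar$ restricts to $f$: let $U\subset\overline{\M}_g[\ell]$ be the locus of stable curves having a unique component of positive genus (necessarily smooth of genus $g$). Its complement --- curves with a non-separating node, or with a separating node splitting off a component of positive genus --- is a closed union of boundary divisors, and $U$ equals $\tau^{-1}(f(\M_g[\ell]))$ because a smooth genus $g$ curve is recovered from its Jacobian by Torelli and that Jacobian is indecomposable; hence $\sigma(U)$ is open in $\M_g^\ast[\ell]$. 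Since $\sigma$ contracts only the trees of rational curves, $\M_g[\ell]\hookrightarrow U$ gives, after composing with $\sigma$, a bijective birational morphism onto the normal variety $\sigma(U)$, which is therefore an isomorphism by Zariski's main theorem.

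Next, for $g\ge 3$, I would compute $\dim\fbar^{-1}(\A^\ast_g[\ell]-\A_g[\ell])$. As $\sigma$ is surjective and $\fbar$ is finite, this equals $\dim\tau\big(\tau^{-1}(\A^\ast_g[\ell]-\A_g[\ell])\big)$, and $\tau^{-1}(\A^\ast_g[\ell]-\A_g[\ell])$ is precisely the locus of stable curves that are not of compact type, a divisor of dimension $3g-4$ whose dense stratum consists of irreducible one-nodal curves. For such a curve $C$, with normalization $\Ctilde$ of genus $g-1$ carrying the two preimages of the node, one has $\tau(C)=[\Jac\Ctilde]\in\A_{g-1}$, which forgets those two points; so this stratum dominates a finite cover of the Torelli locus in $\A_{g-1}$, of dimension $3(g-1)-3=3g-6$, with $2$-dimensional fibres. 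Stable curves whose dual graph has first Betti number $t\ge 2$ map into the locus of products of Jacobians of smooth curves of total genus $g-t$ inside $\A_{g-t}$, of dimension at most $3(g-t)-3<3g-6$. Hence $\dim\fbar^{-1}(\A^\ast_g[\ell]-\A_g[\ell])=3g-6=\dim\M_g^\ast[\ell]-3$, which is the codimension $3$ assertion. The same bookkeeping shows that the rest of $\M_g^\ast[\ell]-\M_g[\ell]$, namely $\fbar^{-1}$ of the closed locus of decomposable principally polarized abelian varieties in $\A_g[\ell]$, has dimension $3g-5$, i.e.\ codimension $2$, with generic point corresponding to a product $\Jac C_1\times E$ with $C_1$ smooth of genus $g-1$ and $E$ an elliptic curve.

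Finally, $\M_g^\ast[\ell]$ is normal, hence unibranch at every point by Lemma~\ref{lem:unibranch}; in particular it is unibranch at the generic point of each codimension $2$ stratum of $\M_g^\ast[\ell]-\M_g[\ell]$, which is in fact stronger than claimed. I expect the one genuinely delicate ingredient to be the description of the extended Torelli morphism along the boundary --- both the fact that $\tau$ is an honest morphism and the fact that on each boundary stratum it forgets exactly the gluing data, which is what validates the fibre-dimension count above. Once this is granted, finiteness of $\fbar$, the open embedding $\M_g[\ell]\hookrightarrow\M_g^\ast[\ell]$, and unibranchness are all formal.
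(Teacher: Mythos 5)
Your proposal is correct in substance but takes a genuinely different route to the same variety. The paper defines $\M_g^\ast[\ell]$ directly as the (relative) normalization of $\A_g^\ast[\ell]$ in the function field of $\M_g[\ell]$, so finiteness of $\fbar$ and the open embedding of $\M_g[\ell]$ come essentially for free; its only substantial geometric input is that the period map is proper on the compact-type locus $\M_g^c$, which it uses both for the same dimension count you perform (the boundary maps into the $(3g-6)$-dimensional locus of genus $g-1$ jacobians) and, via the universal property of normalization, to produce a map $\M_g^c[\ell]\to\M_g^\ast[\ell]$ whose fibre over the generic point of a codimension $2$ stratum is a genus $g-1$ curve; unibranchness is then proved by observing that the link of that point is a circle bundle over this curve, hence connected. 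You instead take the Stein factorization of the extended Torelli morphism $\overline{\M}_g[\ell]\to\A_g^\ast[\ell]$. This buys you normality of $\M_g^\ast[\ell]$ outright, so unibranchness everywhere follows from Lemma~\ref{lem:unibranch} --- a cleaner ending (and in fact the paper's normalization is also normal, so its explicit link computation proves more than it needs) --- but it makes the extension of $\tau$ over all of $\overline{\M}_g[\ell]$ the load-bearing input; that extension is a theorem of Namikawa and should be cited as such rather than waved at with ``the valuative criterion,'' whereas the paper only needs the easier properness of $\M_g^c\to\A_g$. Two small repairs: your locus $U$ is just $\M_g[\ell]$ itself, since stability forbids rational trees on an unmarked compact-type curve with a unique positive-genus component, so the Zariski-main-theorem step simplifies; and the parenthetical ``necessarily smooth of genus $g$'' in the definition of $U$ is only valid after non-separating nodes have been excluded.
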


\begin{proof}
Define $\M_g^\ast[\ell]$ to be the normalization of $\A_g^\ast[\ell]$ in the
function field of $\M_g[\ell]$. Since $\M_g[\ell]$ is smooth (and thus normal)
when $\ell\ge 3$, $\M_g[\ell]$ is a subvariety of $\M_g^\ast[\ell]$. The
projection $\fbar : \M_g^\ast[\ell]\to\A_g^\ast[\ell]$ is finite.  The generic
point of a boundary component of $\A_g^\ast[\ell]$ is a principally polarized
abelian variety of dimension $g-1$ with a level $\ell$ structure. The generic
point of the image of the boundary $\M_g^\ast[\ell]-\M_g[\ell]$ is the jacobian
of a smooth projective curve of genus $g-1$, which has dimension $3g-6$. Since
$\fbar$ is finite, this implies that $\fbar^{-1}(\A^\ast_g[\ell]-\A_g[\ell])$
has codimension $3$ in $\M_g^\ast[\ell]$.

The moduli space $\M_g^c$ of genus $g$ complex projective curves of compact type
is the complement of the divisor $\Delta_0$ in the Deligne-Mumford
compactification $\overline{\M}_g$ of $\M_g$. It will be regarded as an
orbifold. The period mapping extends to a proper mapping $\M_g^c \to \A_g$.
Since
$$
\pi_1(\M_g,x_o) \to \pi_1(\M_g^c,x_o) \to \pi_1(\A_g,a_o) \cong \Sp_g(\Z)
$$
is surjective, so is $\pi_1(\M_g^c,x_o) \to \pi_1(\A_g,a_o) \cong \Sp_g(\Z)$.
Denote the Galois covering of $\M_g^c$ corresponding to the kernel of the
homomorphism $\pi_1(\M_g^c,x_o) \to \Sp_g(\Z/\ell\Z)$ by $\M_g^c[\ell]$. This is
a smooth orbifold that contains $\M_g[\ell]$ as a Zariski open subset. Since
$\M_g^c[\ell]$ is normal, the period mapping
$$
\M_g^c[\ell] \to \A_g[\ell] \hookrightarrow \A_g^\ast[\ell]
$$
factors through $\M_g^\ast[\ell] \to \A^\ast[\ell]$. The fiber of $\M_g^c[\ell]
\to \M_g^\ast[\ell]$ over the generic point of a codimension $2$ stratum of
$\M_g^\ast[\ell]-\M_g[\ell]$ is a smooth projective curve of genus $g-1$. The
link of the image of such a genus $g-1$ curve $C$ in $\M_g^\ast[\ell]$ is the
link of $C$ in $\M_g^c[\ell]$, which is the unit tangent bundle of $C$ and is
therefore connected. It follows that $\M_g^c[\ell]$ is unibranch at the
generic point of each codimension 2 component of $\M_g^\ast[\ell]-\M_g[\ell]$.
\end{proof}

\section{Monodromy Theorems}
\label{sec:density}

There are two significant classes of examples where the hypotheses of
Theorem~\ref{thm:strictness} are satisfied and in which we can prove
Theorem~\ref{thm:weak-density} and generalizations. These are constructed from
moduli spaces of curves, and from moduli spaces of abelian varieties. In both
cases the local system $\V$ over $M$ is pulled back from the local system over
$\A_g$ whose fiber over the moduli point of the abelian variety $A$ is
$H^1(A;\Z)$. In both cases the image of the monodromy homomorphism is a finite
index subgroup of
$$
\Aut(H^1(A,\Z),\theta)
$$
where $\theta : H^1(A,\Z)^{\otimes 2} \to \Z$ denotes the polarization. The
choice of a symplectic basis of $H^1(A;\Z)$ gives an isomorphism of this group
with $\Sp_g(\Z)$, the group of $2g\times 2g$ integral symplectic matrices.

\subsection{Curves}

Suppose that $g\ge 5$ and that $\ell \ge 3$. Then $\M_g[\ell]$ is a smooth
smooth quasi-projective variety. Fix an imbedding $\M_g^\ast[\ell]
\hookrightarrow \P^N$ of the Satake compactification of $\M_g[\ell]$. Suppose
that $L$ is a linear subspace of $\P^N$ that is generic with respect to the
inclusion $f : \M_g^\ast[\ell] \hookrightarrow \P^N$. and that satisfies $\dim
(L\cap \M_g[\ell]) \ge 3$. Set $M = f^{-1}(L)$. This is a smooth subvariety of
$\M_g[\ell]$ of dimension $\ge 3$.

Suppose that $n\ge 0$. Denote the restriction of the universal curve over
$\M_g[\ell]$ to $M$ by $\cC_M$ and its $n$th power by $\cC_M^n$. This is a
quasi-projective variety. Fix an imbedding $\cC_M^n \hookrightarrow \P^r$.

\begin{theorem}
\label{thm:density_curves}
If $X$ is a generic section of $\cC^N_M$ in $\P^r$ by a complete intersection of
codimension $c\le \codim M$ and multi-degree $(e_1,\dots,e_c)$, where $0\ll e_1
\ll e_2 \ll \dots \ll e_c$, then
\begin{enumerate}

\item $X$ is smooth,

\item if $D$ is an irreducible divisor in $X$ and $E \to D$ is a dominant
morphism from a smooth variety to $D$, then the image of $\pi_1(E,e_o)^\wedge
\to \Sp_g(\Zhat)$ is a finite index subgroup of $\Sp_g(\Zhat)$.

\end{enumerate}
\end{theorem}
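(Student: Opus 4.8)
The plan is to deduce Theorem~\ref{thm:density_curves} from Theorem~\ref{thm:strictness} by checking that the geometric objects built from $M$ and $\cC_M^n$ satisfy the hypotheses in Section~\ref{sec:setup}, and then applying the ``lean section'' machinery of Corollary~\ref{cor:generic_lean} and Lemma~\ref{lem:families}. First I would set up the base case: take $M = f^{-1}(L)$ as in the statement, let $M^c = M^c_g[\ell] \cap (\text{corresponding section})$ and $\Mbar = \M_g^\ast[\ell] \cap L$, with $\V$ the pullback of the standard weight-one local system on $\A_g$. By Proposition~\ref{prop:M-satake} the unsectioned triple $\M_g[\ell] \hookrightarrow \M_g^c[\ell] \hookrightarrow \M_g^\ast[\ell]$ satisfies conditions (\ref{item4}) and (\ref{item5}) when $g\ge 3$ (boundary codimension $\ge 3$ in $M^c$, codimension $\ge 2$ complement, unibranch along codimension-$2$ strata), and Putman's theorem \cite{putman:pic} together with the argument in the Introduction gives condition (\ref{item6}) — $\Pic$ of every level cover has rank $1$ — when $g\ge 5$. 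Corollary~\ref{cor:linear} then says that passing to a generic linear section $L$ with $\dim(M\cap L)\ge 3$ preserves all of (\ref{item1})--(\ref{item6}). So Theorem~\ref{thm:strictness} holds for $\V$ over $M$: for any irreducible divisor $D_0 \subset M$ and dominant $E_0 \to D_0$, the image of $\pi_1(E_0)^\wedge \to \G_o^\wedge$ has finite index in the image of $\pi_1(M)^\wedge \to \G_o^\wedge$, and this latter image is a finite index subgroup of $\Sp_g(\Zhat)$.

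Next I would promote this from $M$ to $\cC_M^n$. The projection $\cC_M^n \to M$ is a smooth projective morphism all of whose fibers have the same dimension $n$, hence it is lean (in fact flat); the local system relevant to the monodromy question on $\cC_M^n$ is just the pullback of $\V$. By Lemma~\ref{lem:families}, the strictness theorem holds for the pullback of $\V$ to $\cC_M^n$: for any irreducible divisor $D \subset \cC_M^n$ and any dominant $E \to D$ from a smooth variety, the image of $\pi_1(E,e_o)^\wedge \to \G_o^\wedge \cong$ (finite index subgroup of) $\Sp_g(\Zhat)$ has finite index in the image of $\pi_1(\cC_M^n) \to \Sp_g(\Zhat)$, which is again of finite index in $\Sp_g(\Zhat)$. (When $n=0$ this is already Theorem~\ref{thm:density_curves}.)

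Finally, to handle the generic complete-intersection section $X$ of $\cC_M^n$ by multidegree $(e_1,\dots,e_c)$ with $0\ll e_1 \ll \dots \ll e_c$: smoothness of $X$ for generic such section is Bertini, giving (1). For (2), apply Corollary~\ref{cor:generic_lean} to the morphism $\cC_M^n \to M$: since the fiber dimension $n$ is positive precisely when $n\ge 1$ — and the case $n=0$ is already done — we may assume $n\ge 1$, and for generic $X$ of the prescribed multidegree the restriction $f_X : X \to M$ is a dominant morphism with all fibers of dimension $n - c$ (one must arrange $c \le n$; when the codimension we want to cut exceeds $n$ we should instead first take linear sections inside $M$, which is allowed by Corollary~\ref{cor:linear}), and in particular $f_X$ is lean. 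Now invoke Lemma~\ref{lem:families} once more, this time with the lean morphism $X \to M$ and the strictness theorem for $\V$ over $M$ established above: for any irreducible divisor $D$ in $X$ and any dominant $E \to D$, the image of $\pi_1(E,e_o)^\wedge \to \G_o^\wedge$ has finite index in the image of $\pi_1(M,x_o)^\wedge \to \G_o^\wedge$, which is a finite index subgroup of $\Sp_g(\Zhat)$. Composing with $\G_o^\wedge \to \Sp_g(\Zhat)$ (an isomorphism onto a finite-index subgroup after a choice of symplectic basis) gives exactly the assertion of (2).

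The main obstacle I anticipate is bookkeeping rather than a deep point: making sure the various ``generic section'' operations compose correctly so that the final $X$ still maps leanly to a base $M$ for which Theorem~\ref{thm:strictness} is known, i.e.\ that one never cuts so deeply that $\dim(M\cap L)$ drops below $3$ or that a fiber becomes zero-dimensional before one wants it to. The cleanest route is to first take all linear sections inside $\M_g[\ell]$ itself (shrinking $M$ via Corollary~\ref{cor:linear}, keeping $\dim M \ge 3$), and only then cut $\cC_M^n$ by the high-degree hypersurfaces of Corollary~\ref{cor:generic_lean}, using that the resulting $X \to M$ remains lean — at which point Lemma~\ref{lem:families} does all the work. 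One should also note the edge case $\dim X = 1$ (forcing $X = \P^1$, which has no divisors) and the trivial divisor cases, but these are immediate.
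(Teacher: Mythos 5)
Your proposal is correct and follows essentially the same route as the paper: verify the hypotheses of Theorem~\ref{thm:strictness} for $\M_g[\ell]$ via Proposition~\ref{prop:M-satake} and Putman's Picard computation, pass to the generic linear section with Corollary~\ref{cor:linear}, show $X\to M$ is lean with Corollary~\ref{cor:generic_lean}, and conclude with Lemma~\ref{lem:families}. The only point the paper makes explicit that you elide is the appeal to Theorem~\ref{thm:smt} to see that $\pi_1(M,x_o)\to\Sp_g(\Z)$ still has image the full level-$\ell$ congruence subgroup after sectioning.
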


\begin{proof}
Proposition~\ref{prop:M-satake} and Putman's computation of the Picard groups of
the $\M_g[m]$ implies that $\V$ and the members of
$$
\M_g[\ell] \hookrightarrow \M_g[\ell]^c \hookrightarrow \M_g^\ast[\ell]
$$
satisfy the hypotheses of Theorem~\ref{thm:strictness}.
Corollary~\ref{cor:linear} implies that for generic linear subspaces $L$ of
$\P^N$ such that $\dim(L\cap \M_g[\ell])\ge 3$, the members of
$$
\M_g[\ell]\cap L \hookrightarrow \M_g[\ell]^c\cap L
\hookrightarrow \M_g^\ast[\ell] \cap L
$$
also satisfy the hypotheses of Theorem~\ref{thm:strictness}. We will assume that
$L$ is such a subspace. Set $M=\M_g[\ell]$, $M^c=\M_g^c[\ell]$ and $\Mbar =
\M_g^\ast[\ell]$. Theorem~\ref{thm:smt} implies that the inclusion
$M\hookrightarrow \M_g[\ell]$ induces an isomorphism on fundamental groups, so
that the image of $\pi_1(M,x_o) \to \Sp_g(\Z)$ is $\Sp_g(\Z)[\ell]$.

Corollary~\ref{cor:generic_lean} implies that $X\to M$ is lean.
Lemma~\ref{lem:families} implies that the image of $\pi_1(E,e_o)^\wedge$ in
$\Sp_g(\Zhat)$ is a finite index subgroup.
\end{proof}

\subsection{Abelian varieties}

Suppose that $g\ge 3$ and that $\ell \ge 3$. Then $\A_g[\ell]$
is a smooth smooth quasi-projective variety. Fix any imbedding
$$
\A_g^\ast[\ell] \hookrightarrow \P^N
$$
of the Satake compactification of $\A_g[\ell]$ given by automorphic forms.
Suppose that $L$ is a linear subspace of $\P^N$ that is generic with respect to
the imbedding $\A_g^\ast[\ell]\hookrightarrow \P^N$ and that satisfies $\dim
(L\cap \A_g^\ast[\ell]) \ge 3$. Set $M = L\cap \A_g[\ell]$. This is a smooth
subvariety of $\A_g[\ell]$ of dimension $\ge 3$.

Suppose that $n\ge 0$. Denote the restriction of the universal abelian variety
over $\A_g[m]$ to $M$ by $\X_M$ and its $n$th power by $\X_M^n$. This is a
quasi-projective variety. Fix an imbedding $\X_M^n \hookrightarrow \P^r$.

\begin{theorem}
\label{thm:density_ppavs}
If $X$ is a generic section of $\X^N_M$ in $\P^r$ by a complete intersection of
codimension $c\le \codim M$ and multi-degree $(e_1,\dots,e_c)$, where $0\ll e_1
\ll e_2 \ll \dots \ll e_c$, then
\begin{enumerate}

\item $X$ is smooth,

\item if $D$ is an irreducible divisor in $X$ and $E \to D$ is a dominant
morphism from a smooth variety to $D$, then the image of $\pi_1(E,e_o)^\wedge
\to \Sp_g(\Zhat)$ is a finite index subgroup of $\Sp_g(\Zhat)$.

\end{enumerate}
\end{theorem}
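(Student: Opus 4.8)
The plan is to transcribe the proof of Theorem~\ref{thm:density_curves} essentially verbatim, with the moduli space of curves replaced by $\A_g[\ell]$ and the Satake compactification $\M_g^\ast[\ell]$ replaced by $\A_g^\ast[\ell]$. Set $M = M^c = \A_g[\ell]$, which for $\ell\ge 3$ is a smooth quasi-projective variety since $\Sp_g(\Z)[\ell]$ acts freely on the Siegel upper half space; set $\Mbar = \A_g^\ast[\ell]$; and let $\V$ be the local system over $M$ whose fiber over the moduli point of $A$ is $H^1(A;\Z)$. Then $\G_o = \Aut H^1(A;\Z) = \GL_{2g}(\Z)$, the image of the monodromy $\pi_1(M,x_o)\to\G_o$ is the congruence subgroup $\Sp_g(\Z)[\ell]$, and its closure in $\G_o^\wedge = \GL_{2g}(\Zhat)$ is the finite-index subgroup $\Sp_g(\Zhat)[\ell]$ of $\Sp_g(\Zhat)$ by strong approximation for $\Sp_{2g}$. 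Thus a statement about the image in $\G_o^\wedge$ becomes a statement about the image in $\Sp_g(\Zhat)$.

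First I would check that $M\hookrightarrow M^c\hookrightarrow\Mbar$ and $\V$ satisfy the conditions of Section~\ref{sec:setup}. Conditions \ref{item1}--\ref{item3} are immediate, and \ref{item4} is easier to verify than in the curves case: the Satake boundary $\A_g^\ast[\ell]-\A_g[\ell]$ has pure codimension $g\ge 3$, which settles both clauses of \ref{item4} at once because $M^c=M$; and \ref{item5} is vacuous, since $M^c-M=\emptyset$. The one substantive input is condition \ref{item6}: that $\Pic\A_g[m]$ is finitely generated of rank $1$ for every $m\ge 1$. This is the abelian-variety counterpart of Putman's theorem on $\Pic\M_g[m]$; for $g\ge 3$ it follows from the exact sequence (\ref{eqn:pic}) together with the known low-degree rational cohomology of $\Sp_g(\Z)[m]$, namely $H^1(\Sp_g(\Z)[m];\Q)=0$ (property~(T) for $\Sp_{2g}(\R)$, $g\ge 2$) and $H^2(\Sp_g(\Z)[m];\Q)$ one-dimensional, spanned by the Hodge class. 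The requirement that the Satake boundary have codimension $\ge 3$ is exactly why Theorem~\ref{thm:density_ppavs} is restricted to $g\ge 3$.

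Granting this, the rest of the argument is the one used for Theorem~\ref{thm:density_curves}. Fix an imbedding $\A_g^\ast[\ell]\hookrightarrow\P^N$ by automorphic forms. Corollary~\ref{cor:linear} shows that replacing $M$ by a generic linear section $M\cap L$ with $\dim(M\cap L)\ge 3$ preserves the conditions of Section~\ref{sec:setup}, and Theorem~\ref{thm:smt} shows that $M\cap L\hookrightarrow\A_g[\ell]$ induces an isomorphism on fundamental groups, so the image of $\pi_1(M\cap L,x_o)^\wedge$ in $\Sp_g(\Zhat)$ is again $\Sp_g(\Zhat)[\ell]$. The universal abelian variety $\X_M^n\to M\cap L$ has all fibers of dimension $gn$. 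For $n=0$ one has $c=0$ and $X=M\cap L$, so the assertion is the Strictness Theorem applied to $M\cap L$; for $n\ge 1$, Corollary~\ref{cor:generic_lean} shows that a generic complete-intersection section $X$ of $\X_M^n$ of multi-degree $(e_1,\dots,e_c)$ with $0\ll e_1\ll\dots\ll e_c$ is smooth (Bertini --- this is part~(1)) and maps leanly to $M\cap L$. Lemma~\ref{lem:families}, applied to the pullback of $\V$ to $X$, then gives: for every irreducible divisor $D$ in $X$ and every dominant morphism $E\to D$ from a smooth variety, the image of $\pi_1(E,e_o)^\wedge\to\Sp_g(\Zhat)$ has finite index in the image of $\pi_1(M\cap L,x_o)^\wedge\to\Sp_g(\Zhat)$, hence has finite index in $\Sp_g(\Zhat)$. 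This is part~(2).

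The main obstacle is the verification of condition \ref{item6} --- the finite generation and rank one of $\Pic\A_g[m]$ for every $m$ and every $g\ge 3$. Everything else is a formal transcription of the curves argument (and is in fact slightly cleaner, since $\A_g[\ell]$ plays the roles of both $M$ and $M^c$, so there is no ``compact type'' locus and no unibranch condition to check along codimension-two boundary strata), modulo the classical congruence subgroup and strong approximation properties of $\Sp_{2g}$ for $g\ge 2$.
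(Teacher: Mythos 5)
Your proposal is correct and follows essentially the same route as the paper: the only substantive point is verifying condition (6) of Section~\ref{sec:setup} (that $\Pic\A_g[m]$ has rank one), which the paper does via Borel's stable cohomology computation plus the nontriviality of the Chern class of the Hodge bundle, exactly as you do (your appeal to property (T) for the $H^1$ vanishing is an equivalent justification), after which the paper likewise declares the remainder ``almost identical'' to the proof of Theorem~\ref{thm:density_curves}.
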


The assumptions are satisfied when $X=\X_M^N$. The case where $M=\A_g[\ell]$ is
an analogue of Theorem~\ref{thm:weak-density} for principally polarized abelian
varieties.

\begin{proof}
Borel's computation of the stable cohomology of arithmetic groups
\cite{borel:coho} (see also \cite[Thm.~3.2]{hain:torelli}) implies that for all
$g\ge 3$ and $\ell \ge 1$, $H^1(\A_g[\ell],\Z)$ vanishes and
$H^2(\A_g[\ell],\Z)$ is finitely generated of rank 1. The exact sequence
(\ref{eqn:pic}) now implies that $\Pic A_g[m]$ is finitely generated of rank
$\le 1$ for all $m\ge \ell$. But since the the determinant of the Hodge bundle
has non-trivial Chern class, it follows that $\Pic\A_g[m]$ has rank 1 for all
$m\ge \ell$. This implies that the members of
$$
\A_g[\ell] = \A_g^c[\ell] \hookrightarrow \A_g^\ast[\ell]
$$
satisfy the hypotheses of Theorem~\ref{thm:strictness}. The rest of the proof is
almost identical with that of Theorem~\ref{thm:density_curves} and is left to
the reader.
\end{proof}

\subsection{Proof of Theorem~\ref{thm:weak-density}}

When $g\ge 5$ and $\ell \ge 3$, this follows from
Theorem~\ref{thm:density_curves} by taking $M=\M_g[\ell]$ and $X=\cC_g^n[\ell]$.
The cases where $\ell = 1,2$ are both immediate consequences of the case $\ell =
4$.

Suppose that $g=3$ and that $D$ is an irreducible divisor in $\M_3[\ell]$. Since
the period mapping $f:\M_g[\ell] \to \A_3[\ell]$ is quasi-finite and dominant,
the Zariski closure $\Dbar$ of the image of $D$ in $\A_3[\ell]$ is a divisor in
$\A_3[\ell]$. If $E \to D$ is dominant, then $E \to \Dbar$ is also dominant.
Theorem~\ref{thm:density_ppavs} now implies that when $E$ is smooth, the image
of $\pi_1(E,e_o)^\wedge$ in $\Sp_3(\Zhat)$ has finite index.

It remains to prove the genus $4$ case. Here we take a different approach
suggested by Nori. It suffices to prove the result when $\ell \ge 3$. Let $G$ be
the Zariski closure of the image of $\pi_1(E,e_o)$ in the $\Q$-group
$$
\Aut(H_1(C_o,\Q),\theta) \cong \Sp_{4/\Q},
$$
where $C_o$ denotes the curve corresponding to the point $x_o \in \M_4$ and
$\theta$ its polarization. Standard arguments imply that $G$ is of Hodge type.
In particular, its Lie algebra $\g$ is a sub Hodge structure of the Lie algebra
$S^2 H_1(C_o)$ of $\Aut(H_1(C_o,\Q),\theta)$. The associated symmetric space $\D
:= G(\R)/K_G$ is hermitian as its tangent space $\g_\C/F^0\g \cong \g^{-1,1}$ at
the base point is a complex subspace of the tangent space of $\h_g$, the
symmetric space of $\Sp_4(\R)$. Set $\G = \Sp_4(\Z)[\ell]\cap G(\R)$. Then
$X:=\G\bs\D$ is a locally symmetric subvariety of $\A_4[\ell]$. The theorem of
the fixed part implies that $X$ contains the image of $D$ in $\A_4[\ell]$ from
which it follows that $\dim X \ge \dim D = 8$.

Write $\D$ as a product $\prod_{j=1}^N \D_j$ of irreducible, symmetric spaces
$\D_j = G_j(\R)/K_j$, where $G_j$ is a real Lie group and $K_j$ is a maximal
compact subgroup. Each $\D_j$ is hermitian, \cite[p.~381]{helgason}. The
classification of hermitian symmetric spaces \cite[p.~518]{helgason} implies
that $G_j$ is isogenous to $\SU(p,q)$ with $p+q \le 5$, to $\SO_o(n,2)$ with
$n\le 7$, to $\Sp_n(\R)$ with $n\le 4$, or to $\SO^\ast(2n)$ with $n\le 4$.

\begin{table}
\begin{tabularx}{4.5in}{|c||C|C|C|C|}
\hline
$G$ & $\SU(p,q)$ & $\SO_o(n,2)$ & $\Sp_n(\R)$ & $\SO^\ast(2n)$ \cr
\hline
$\dim_\C G/K$ & $pq$ & $n$ & $n(n+1)/2$ & $n$ \cr
$\rank_\C G_\C$ & $p+q-1$ & $1+\lfloor n/2 \rfloor$ & $n$ & $n(n-1)/2$ \cr
\hline
\end{tabularx}
\bigskip
\caption{}
\label{table:data}
\end{table}

Let $r_j = \rank_\C G_j$, the rank of the complexification of the Lie algebra of
$G_j$, and let $d_j = \dim_\C \D_j$. Since $\prod G_j \subseteq \Sp_4$ and since
$\prod \D_j \subseteq \h_4$, we have
$$
r_1 + \dots + r_N \le \rank_\C \Sp_4 = 4 \text{ and }
8 \le d_1 + \dots + d_N = \dim\D \le 10.
$$
The information in Table~\ref{table:data} implies that the only solution is
$N=1$ and $G=\Sp_4(\R)$, which forces $X$ to be $\A_4[\ell]$. The main result of
\cite{mvw} (see also \cite[Thm.~5.4]{nori:finite}) now implies that the closure
of the image of $\pi_1(E,e_o)$ in $\Sp_4(\Zhat)$ has finite index.

\subsection{Genus $2$}
\label{sec:genus2}

As remarked in the introduction, Theorem~\ref{thm:weak-density} (and hence
Theorem~\ref{thm:density_curves}) does not hold when $g=2$.
Theorem~\ref{thm:density_ppavs} also fails when $g=2$.

Fix $\ell \ge 3$. Denote the Siegel upper half space of rank $g$ by
$\h_g$:
$$
\h_g = \{Z \in \M_g(\C) : Z=Z^T \text{ and } \Im Z > 0\}.
$$
This is the symmetric space of $\Sp_g(\R)$. When $g=1$, it is the usual upper
half plane $\{\tau \in \C : \Im\tau > 0\}$. For each $\ell \ge 1$ we have
$$
\A_g[\ell] = \Sp_g(\Z)[\ell] \bs \h_g.
$$

View $\h_1\times\h_1$ as a submanifold of $\h_2$ via the inclusion
$$
(\tau_1,\tau_2)\mapsto
\begin{pmatrix}
\tau_1 & 0 \cr
0 & \tau_2
\end{pmatrix}
.
$$
The locus in $\h_2$ of (framed) principally polarized abelian surfaces that
are the product (as a polarized abelian variety) of two elliptic curves is
$$
\h_2^\red = \bigcup_{\gamma \in \Sp_2(\Z)} \gamma(\h_1\times \h_1).
$$
For each $\ell\ge 1$, the locus of reducible abelian surfaces $\A_2^\red[\ell]$
is the image in $\A_2[\ell]$ of $\h_2^\red$. When $\ell \ge 3$ the period
mapping induces an isomorphism\footnote{If interpreted in the category of
orbifolds, this is true for all $\ell\ge 1$.}
$$
\M_2[\ell] \cong \A_2[\ell]-\A_2[\ell]^\red.
$$
This is well-known --- cf.\ \cite[Prop.~6]{hain:ab_var}.

Choose $g \in \Sp_2(\Q)$ so that $g(\h_1\times \h_1) \not\subseteq \h_2^\red$.
This is possible because $\Sp_2(\R)$ acts transitively on $\h_2$ and because
$\Sp_2(\Q)$ is dense in $\Sp_2(\R)$ in the classical topology. Set
$$
\G = \big(\SL_2(\Z)\times\SL_2(\Z)\big) \cap g^{-1}\Sp_2(\Z)[\ell]g.
$$
This is an arithmetic subgroup of $\SL_2(\Q)^2$. Set $E^c = \G\bs(\h_1\times
\h_1)$ and define $f : E^c \to \A_2[\ell]$ to be the morphism induced by
$$
(\tau_1,\tau_2) \mapsto
g\begin{pmatrix}
\tau_1 & 0 \cr
0 & \tau_2
\end{pmatrix}
.
$$
When $\ell \ge 3$, $\G$ is torsion free and $E^c$ is smooth. Set $E =
f^{-1}(\M_2[\ell])$. This is a Zariski open subset of $E^c$. When $\ell \ge 3$,
the homomorphism $\pi_1(E,e_o) \to \pi_1(E^c,e_o)$ is surjective. The Zariski
closure of the image of $\pi_1(E,e_o) \to \pi_1(\A_2[\ell]$ is $g\G g^{-1}$,
which has Zariski closure $g \SL_2(\Q)^2 g^{-1}$ in $\Sp_2(\Q)$.

Let
$$
D^c = \text{ image of } E^c \text{ in } \A_2[\ell].
$$
This is a codimension one algebraic subvariety of $\A_2[\ell]$. Set $D=D^c \cap
\M_2[\ell]$. The choice of $g$ guarantees that $D$ is non-empty.

When $\ell \ge 3$, the diagram $E \to D \hookrightarrow \M_2[\ell]$ is a counter
example to Theorem~\ref{thm:weak-density} and the diagram $E^c \to D^c
\hookrightarrow \A_2[\ell]$ is a counter example to
Theorem~\ref{thm:density_ppavs} when $g=2$ as the image of $\pi_1(E,e_o) \to
\Sp_2(\Q)$ is not Zariski dense.

\section{Cohomological Applications}
\label{sec:applications}

Suppose that the members of the diagram
$$
M \hookrightarrow M^c \hookrightarrow \Mbar
$$
and the local system $\V$ over $M^c$ satisfy the conditions in
Section~\ref{sec:setup}. Fix a prime number $p$. Denote the Zariski closure
of the image of the monodromy homomorphism $\pi_1(M,x_o) \to
\Aut(V_o\otimes\Qp)$ by $R$. This is a $\Qp$-group. Denote by $\W$ the local
system of $\Qp$ vector spaces over $M$ that corresponds to the $R$-module $W$.

\begin{theorem}
Suppose that $R$ is a connected reductive group and that $X\to M$ is a lean
morphism. If $U$ is Zariski open subset of $X$, then for all non-trivial,
irreducible $R$-modules $W$, the restriction mapping
$$
H^j(X,\V) \to H^j(U,\V)
$$
is an isomorphism when $j=0,1$ and an injection when $j=2$.
\end{theorem}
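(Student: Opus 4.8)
The plan is to deduce the assertion from Lemma~\ref{lem:families} by feeding it into the localization (Gysin) long exact sequence for the closed subvariety $Z:=X\setminus U$ of $X$. A codimension count will show that the only possible obstruction to the claimed isomorphisms in degrees $0,1$ and injectivity in degree $2$ is the nonvanishing of the monodromy invariants of $\W$ (the local system attached to $W$) along the codimension one components of $Z$, and that nonvanishing is precisely what Lemma~\ref{lem:families} rules out. I would first reduce to the case in which $X$ is smooth and connected; this is automatic in the situations of Theorems~\ref{thm:density_curves} and \ref{thm:density_ppavs}, where $X$ is a smooth complete intersection section. Write $Z=D\cup Z'$, where $D=D_1\cup\dots\cup D_s$ is the union of the codimension one irreducible components of $Z$ and $\codim_X Z'\ge 2$.

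The key input, used repeatedly, is the vanishing of $H^0$ of $\W$ along boundary divisors. By hypothesis the members of $M\hookrightarrow M^c\hookrightarrow\Mbar$ and $\V$ satisfy the conditions of Section~\ref{sec:setup}, so the Strictness Theorem, and hence Lemma~\ref{lem:families} (using that $h:X\to M$ is lean), applies. For each $i$, put $D_i^\circ:=D_i^{\mathrm{sm}}\setminus\bigcup_{j\ne i}D_j$, a connected, Zariski dense open subset of the irreducible divisor $D_i$ of $X$. Applying Lemma~\ref{lem:families} to the inclusion $D_i^\circ\hookrightarrow D_i$, the image of $\pi_1(D_i^\circ)^\wedge$ in $\G_o^\wedge$ has finite index in the image of $\pi_1(M,x_o)^\wedge$. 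Projecting to $\GL(V_o\otimes\Qp)$ and taking closures, the closure of $\rho(\pi_1(D_i^\circ))$ is a finite-index, hence open, subgroup of the closure of $\rho(\pi_1(M,x_o))$; since $R$ is connected these have the same Zariski closure $R$, and therefore $H^0(D_i^\circ,\W)=W^{\pi_1(D_i^\circ)}=W^R=0$, $W$ being a nontrivial irreducible $R$-module. The same reasoning, with Proposition~\ref{prop:normalization} in place of Lemma~\ref{lem:families} (the dominant morphism $h$ makes the image of $\pi_1(X)$ in $\pi_1(M,x_o)$ of finite index), gives $H^0(X,\W)=0$ as well.

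Now I would run the localization sequences. Set $V:=X\setminus D$, a smooth variety, so that $U=V\setminus(Z'\cap V)$ with $Z'\cap V$ of codimension $\ge 2$ in $V$; stratifying $Z'\cap V$ by smooth locally closed subvarieties and using purity gives $H^j_{Z'\cap V}(V,\W)=0$ for $j<4$, whence $H^j(V,\W)\xrightarrow{\ \sim\ }H^j(U,\W)$ for all $j\le 2$. It remains to treat $V=X\setminus D\hookrightarrow X$. Let $D^\circ=\bigsqcup_i D_i^\circ$ be the locus where $D$ is smooth of codimension one; then $D\setminus D^\circ$ has codimension $\ge 2$ in $X$, so combining purity along $D^\circ$ with this higher-codimension vanishing yields, for $j\le 3$, a natural isomorphism $H^j_D(X,\W)\cong H^{j-2}(D^\circ,\W|_{D^\circ})$. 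By the previous paragraph the right side vanishes for $j\le 2$ (for $j=2$ it equals $\bigoplus_i H^0(D_i^\circ,\W|_{D_i^\circ})=0$), while $H^3_D(X,\W)\cong H^1(D^\circ,\W|_{D^\circ})$ remains uncontrolled. Feeding this into
$$
\cdots\to H^j_D(X,\W)\to H^j(X,\W)\to H^j(V,\W)\to H^{j+1}_D(X,\W)\to\cdots
$$
gives $H^j(X,\W)\xrightarrow{\ \sim\ }H^j(V,\W)$ for $j\le 1$ and an injection for $j=2$; composing with $H^j(V,\W)\cong H^j(U,\W)$ is exactly the assertion.

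The step needing the most care — and the only one that is not formal — is the passage from the profinite conclusion of Lemma~\ref{lem:families} to the vanishing of the honest invariants $W^{\pi_1(D_i^\circ)}$. The profinite statement does not by itself make $\rho(\pi_1(D_i^\circ))$ of finite index in $\rho(\pi_1(M,x_o))$ as abstract groups — a Zariski dense subgroup can be thin — so one passes to $p$-adic closures, uses that a finite-index (hence open) subgroup of a compact $p$-adic analytic group has the same Lie algebra, and then invokes the connectedness of $R$ to conclude that the two Zariski closures coincide. Everything else is bookkeeping with the codimension filtration of $Z$, arranged so that only the divisorial part — the part governed by Lemma~\ref{lem:families} — can contribute in degrees $\le 2$.
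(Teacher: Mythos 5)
Your proposal is correct and follows essentially the same route as the paper: decompose $X-U$ into its divisorial components and a codimension $\ge 2$ part, use Lemma~\ref{lem:families} (via the leanness of $X\to M$) to deduce $H^0(D_i^\circ,\W)=0$ from the non-triviality and irreducibility of $W$ and the connectedness of $R$, and conclude with the Gysin/localization sequence. You simply supply more detail than the paper does at the two points it leaves implicit --- the passage from the profinite finite-index statement to the vanishing of the actual invariants, and the purity bookkeeping behind the Gysin sequence.
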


\begin{proof}
Write
$$
X-U = Z \cup \bigcup_\alpha D_\alpha
$$
where $Z$ is a closed subvariety of $X$, each of whose components has
codimension $\ge 2$, and where each $D_\alpha$ is an irreducible divisor in $X$.
Since $W$ is a non-trivial irreducible representation of $R$,
Lemma~\ref{lem:families} implies that $H^0(D^o_\alpha,\W)$ vanishes for all
$\alpha$, where $D_\alpha^o$ denotes the smooth locus of $D_\alpha$. The result
now follows from the exactness of the Gysin sequence
$$
0 \to H^1(X,\W) \to H^1(U,\W) \to
\bigoplus_\alpha H^0(D^o_\alpha,\pi_\alpha^\ast\W)
\to H^2(X,\W) \to H^2(U,\W).
$$
\end{proof}

Since all irreducible representations of $\Sp(H)$ are defined over $\Q$, we
conclude:

\begin{corollary}
Suppose that $g=3$ or $g\ge 5$, $n\ge 0$ and $m\ge 1$. If $U$ is a Zariski open
subset of $\cC_g^n[m]$, then for all non-trivial, irreducible representations
$W$ of $\Sp(H_\Q)$, the map
$$
H^j(\cC_g^n[m],\W) \to H^j(U,\W)
$$
induced by the inclusion $U\hookrightarrow \cC_g^n[m]$
is an isomorphism when $j=0,1$ and an injection when $j=2$.
\end{corollary}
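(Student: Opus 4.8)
The plan is to deduce the corollary from the theorem above by realizing $\cC_g^n[m]$ as the total space of a lean morphism over a base satisfying the conditions of Section~\ref{sec:setup}, after first reducing to the case $m\ge 3$. For that reduction: if $m\mid m'$ with $m'\ge 3$, then $\cC_g^n[m']\to\cC_g^n[m]$ is a finite (orbifold) Galois covering, the preimage $U'$ of a Zariski open $U\subseteq\cC_g^n[m]$ is a Galois-invariant Zariski open of $\cC_g^n[m']$, and --- the covering being finite and the coefficients a field of characteristic $0$ --- the restriction maps identify $H^j(\cC_g^n[m],\W)$ and $H^j(U,\W)$ with the invariants of $H^j(\cC_g^n[m'],\W)$ and $H^j(U',\W)$ under the Galois group. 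Since this restriction is equivariant and taking invariants under a finite group is exact, the conclusion for $(\cC_g^n[m'],U')$ implies it for $(\cC_g^n[m],U)$, so I would assume $m\ge 3$ henceforth.

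For $g\ge 5$ I would take $M=\M_g[m]$, $M^c=\M_g^c[m]$, $\Mbar=\M^\ast_g[m]$, and let $\V$ be the local system with fibre $H^1(\Jac C;\Z)$. By Proposition~\ref{prop:M-satake} and Putman's computation \cite{putman:pic} of the Picard groups of the $\M_g[m']$, these data satisfy the conditions of Section~\ref{sec:setup} --- this is precisely the verification made in the proof of Theorem~\ref{thm:density_curves} --- and the Zariski closure $R$ of the image of the monodromy homomorphism $\pi_1(M,x_o)\to\Aut(V_o\otimes\Qp)$ is $\Sp_g$ over $\Qp$, which is connected and reductive. The $n$-fold fibre power $\cC_g^n[m]\to\M_g[m]$ of the universal curve is smooth and surjective, hence flat, hence lean, so the theorem above applies with $X=\cC_g^n[m]$ over this $M$: for every non-trivial irreducible $R$-module $W$ and every Zariski open $U\subseteq\cC_g^n[m]$, the restriction $H^j(\cC_g^n[m],\W)\to H^j(U,\W)$ is an isomorphism for $j=0,1$ and an injection for $j=2$, where $\W$ is as in the statement of the corollary, namely the pullback to $\cC_g^n[m]$ of the local system on $M$ attached to $W$. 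Because every irreducible representation of $\Sp(H_\Q)$ is defined over $\Q$, the non-trivial irreducible $R$-modules are exactly the non-trivial irreducible representations of $\Sp(H_\Q)$, which gives the corollary for $g\ge 5$.

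For $g=3$ I would run the same argument with $M=\A_3[m]$ in place of $\M_3[m]$: by Borel's computation of the stable cohomology of arithmetic groups, as used in the proof of Theorem~\ref{thm:density_ppavs}, the data $\A_3[m]=\A_3^c[m]\hookrightarrow\A_3^\ast[m]$ together with the local system $\V$ with fibre $H^1$ satisfy the conditions of Section~\ref{sec:setup}, and again $R=\Sp_3$ over $\Qp$ is connected reductive. The step that needs care --- and the one I expect to be the main obstacle --- is that $\cC_3^n[m]\to\A_3[m]$ is lean. It is the composite of the smooth surjective projection $\cC_3^n[m]\to\M_3[m]$, whose fibres $C^n$ are irreducible, with the period mapping $\M_3[m]\to\A_3[m]$, which is quasi-finite and dominant and hence lean. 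This composite is \emph{not} surjective --- its image omits the decomposable principally polarized threefolds --- so its leanness does not follow from the equal-fibre-dimension criterion; instead one uses that pulling back along a smooth surjection with irreducible fibres preserves both codimension and purity, so that the preimage under the composite of an irreducible closed subvariety $Z\subseteq\A_3[m]$ is a divisor exactly when the preimage of $Z$ under the period mapping is, whereupon leanness of the period mapping forces $Z$ to be a divisor. Granting this, the theorem above applied with $X=\cC_3^n[m]$ over $M=\A_3[m]$ completes the proof.
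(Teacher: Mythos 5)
Your proposal is correct and follows exactly the route the paper intends: the corollary is deduced from the preceding theorem by taking $M=\M_g[m]$ (resp.\ $M=\A_3[m]$ for $g=3$), noting that $\cC_g^n[m]\to M$ is lean and that irreducible representations of $\Sp(H_\Q)$ are defined over $\Q$ and absolutely irreducible. The paper states this derivation in one line; your additional details (the reduction to $m\ge 3$ via finite covers, and the leanness of the composite $\cC_3^n[m]\to\M_3[m]\to\A_3[m]$ using quasi-finiteness of the period map) are correct completions of what is left implicit.
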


Since the monodromy representation $\pi_1(\cC_g^n[m],x_o)\to \Sp(H_\Q)$ is
Zariski dense, and since each irreducible representation of $\Sp(H_\Q)$ is
absolutely irreducible (so irreducible over $\Qp$ as well), the corollary holds
when $\Qp$ is replaced by any field of characteristic zero. In particular,
it holds when $\Qp$ is replaced by $\Q$.

\section{Proof of the Strictness Theorem}
\label{sec:proof}

To prove Theorem~\ref{thm:strictness}, it suffices to prove the corresponding
statement for $M^c$. Namely, that if $D$ is an irreducible divisor in $M^c$ and
$h:E \to D$ is a dominant morphism from an irreducible smooth variety to $D$,
then the image of $\pi_1(E,e_o)$ in $\G_o$ has finite index in the image of
$\pi_1(M^c,x_o)$ in $\G_o$. This suffices because $D\cap M$ is non-empty, as
each component of $M^c - M$ has codimension $\ge 2$, and because
$\pi_1(h^{-1}(D),e_o) \to \pi_1(E,e_o)$ is surjective as $E$ is smooth.

\subsection{Setup and Preliminaries}

The assumption on the Picard group of $M$ implies that $M=M^c=\P^1$ or $\dim M
\ge 2$. Since $\P^1$ is simply connected, the theorem is trivially true when
$\dim M = 1$. So we assume that $\dim M\ge 2$. Suppose that $D$ is an
irreducible divisor in $M^c$. Let $X$ be a generic $2$-dimensional linear
section of $\Mbar$ (with respect to any projective imbedding) that is transverse
to all strata of $(\Mbar-M)\cup D$. Since each component of $\Mbar-M^c$ has
codimension $\ge 3$, $X$ is a complete subvariety of $M^c$. Since each component
of $M^c-M$ has codimension $\ge 2$, all singularities of $X$ are isolated. The
local Lefschetz Theorem \cite[p.~153]{smt} implies that $X$ is unibranch at each
of its singular points.

Recall that $M_m \to M$ is the connected Galois covering that corresponds to the
kernel of the mod-$m$ monodromy homomorphism $\pi_1(M,x_o) \to \Aut(V_o/mV_o)$.
Define $X_m$ to be the covering of $X$ that corresponds to the kernel of
$\pi_1(X,x_o) \to \Aut(V_o/mV_o)$. It is a subvariety of $M_m$.

Define $\Mbar_m$ to be the normalization of $\Mbar$ in the function field of
$M_m$. The morphism $\Mbar_m \to \Mbar$ is finite and surjective. Define $M^c_m$
to be the inverse image of $M^c$ in $\Mbar_m$. The projection $M_m^c \to M^c$ is
a Galois covering. It follows that $M_m^c$ is also unibranch at each of its
points.

\begin{lemma}
\label{lem:X2M}
The homomorphism $\pi_1(X_m,x_o) \to \pi_1(M^c_m,x_o)$ is surjective for all
$m\ge 1$.
\end{lemma}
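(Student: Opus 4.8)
The statement to prove is that $\pi_1(X_m,x_o) \to \pi_1(M^c_m,x_o)$ is surjective. The natural approach is to realize $X_m$ as (essentially) a generic linear section of $M^c_m$ and then invoke the Goresky--MacPherson Lefschetz theorem (Theorem~\ref{thm:smt}). First I would observe that, since $X$ was chosen as a generic $2$-dimensional linear section of $\Mbar$ transverse to all strata of $(\Mbar-M)\cup D$, its preimage $\Xbar_m$ in $\Mbar_m$ under the finite morphism $\Mbar_m \to \Mbar$ is again a linear section --- with respect to the composite of $\Mbar_m \to \Mbar$ with a projective imbedding $\Mbar \hookrightarrow \P^N$ --- cut out by a codimension-$(\dim M - 2)$ linear subspace $L$. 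The key point is that this composite morphism $\Mbar_m \to \P^N$ is finite (as $\Mbar_m \to \Mbar$ is finite and $\Mbar \hookrightarrow \P^N$ is a closed imbedding), so Theorem~\ref{thm:smt} applies to it.

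Next I would run the Lefschetz comparison at the level of the \emph{open} variety $M^c_m$. Apply Theorem~\ref{thm:smt} to the restriction $M^c_m \to \P^N$, which is a morphism with finite fibers from a variety of dimension $n := \dim M \ge 2$, and to the codimension $c := n-2$ linear subspace $L$. Since $L$ is generic, the theorem gives that $\pi_j(L \cap M^c_m, x_o) \to \pi_j(M^c_m, x_o)$ is an isomorphism for $j < n(c) = 2$ and surjective for $j = n(c) = 2$; in particular $\pi_1(L\cap M^c_m, x_o) \to \pi_1(M^c_m,x_o)$ is surjective. It then remains to identify $L \cap M^c_m$ with $X_m$, or at least to produce a surjection $\pi_1(X_m) \to \pi_1(L\cap M^c_m)$. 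Here one uses that $X = L \cap M^c$ by construction (transversality to the strata of $\Mbar - M$, together with the codimension-$\ge 3$ hypothesis on $\Mbar - M^c$, forces $X \subseteq M^c$), and that $X_m$ is by definition the covering of $X$ pulled back from $M_m \to M$; since $L \cap M^c_m \to X$ is the restriction of $M^c_m \to M^c$ and hence is (a union of components of) that same covering, one checks $X_m$ is a connected component of $L\cap M^c_m$ mapping onto it --- or, using that $X$ is unibranch at its singular points and $\pi_1(X)$ surjects appropriately, that the relevant covering is connected. I would phrase this via: $L \cap M^c_m$ is the covering of $X = L\cap M^c$ corresponding to the restriction of the monodromy homomorphism $\pi_1(X,x_o) \to \Aut(V_o/mV_o)$, which is exactly the definition of $X_m$, hence $X_m = L\cap M^c_m$ and the surjection follows directly.

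**The main obstacle.** The delicate point is the very last identification: a priori $L \cap M^c_m$ could be \emph{disconnected} even though $M^c_m$ is connected, because the map $\pi_1(X,x_o) \to \pi_1(M^c,x_o)$ need not be surjective (indeed this whole paper is about the image having infinite index). So $X_m$, defined as the covering of $X$ associated to the \emph{restricted} mod-$m$ representation on $\pi_1(X)$, is automatically connected, but one must check it coincides with a single component of $L \cap M^c_m$ and that this component surjects onto $M^c_m$. The surjectivity of $\pi_1(L\cap M^c_m) \to \pi_1(M^c_m)$ from Theorem~\ref{thm:smt} already forces $L\cap M^c_m$ to be connected (a disconnected cover cannot have $\pi_1$ of one piece surjecting), which resolves the issue: $L\cap M^c_m$ is connected, equals $X_m$, and the Lefschetz surjectivity is exactly the claim. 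I would also need to double-check that genericity of the original $X$ can be arranged compatibly with genericity of $L$ relative to the finite morphism $M^c_m \to \P^N$ for \emph{all} $m$ simultaneously --- this follows because there are only countably many $m$ and each imposes a nonempty Zariski-open condition on the choice of $L$, so a single generic $L$ works for all $m$ at once; alternatively one fixes $m$ throughout since the statement is for each fixed $m$.
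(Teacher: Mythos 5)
Your strategy of applying Theorem~\ref{thm:smt} directly to the morphism $M^c_m \to \P^N$ has a genuine gap: that theorem, as stated, requires the source to be a connected complex algebraic \emph{manifold}, and $M^c_m$ need not be smooth. Nothing in the hypotheses of Section~\ref{sec:setup} makes $M^c$ smooth along $M^c-M$; it is only assumed to be \emph{unibranch} at the generic points of the codimension~$2$ strata. In the motivating example, $M^c$ is the image of the smooth orbifold $\M_g^c[\ell]$ in the Satake compactification $\M_g^\ast[\ell]$, and this image is singular along the boundary because entire genus $g-1$ curves are contracted there. A Lefschetz theorem for singular spaces exists, but its connectivity bound degrades with the local homotopical defect of the singularities (one needs, e.g., a local complete intersection to recover the full range), and no such control is available for $M^c_m$. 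So the step ``$\pi_1(L\cap M^c_m,x_o)\to\pi_1(M^c_m,x_o)$ is surjective by Theorem~\ref{thm:smt}'' is not justified, and neither is the $j=0$ case of that theorem which you invoke to see that $L\cap M^c_m$ is connected and hence equal to $X_m$.

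The paper's proof is arranged precisely to sidestep this: it applies Theorem~\ref{thm:smt} only to the smooth covering $M_m$ of the smooth $M$ (with its finite-fibered map to $\P^N$), obtaining the surjection $\pi_1(\Xo_m,x_o)\to\pi_1(M_m,x_o)$, where $\Xo_m=X_m\cap M_m$, and then uses the unibranch hypothesis via Lemma~\ref{lem:surjective} to obtain the surjections $\pi_1(\Xo_m,x_o)\to\pi_1(X_m,x_o)$ and $\pi_1(M_m,x_o)\to\pi_1(M^c_m,x_o)$; the lemma then follows from the resulting commutative square. If you want to retain your line of argument, you must route it through the smooth locus in exactly this way. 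Your observation that a single generic $L$ can be chosen to be generic for the countably many maps $M_m\to\P^N$ simultaneously is correct and worth making explicit, but it does not repair the smoothness issue.
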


\begin{proof}
Set $\Xo_m = X_m\cap M_m$. Since $X_m$ and $M_m^c$ are unibranch at each of
their points, Lemma~\ref{lem:surjective} implies that the top and bottom
morphisms in the diagram
$$
\xymatrix{
\pi_1(\Xo_m,x_o) \ar@{->>}[r]\ar@{->>}[d] & \pi_1(X_m,x_o) \ar[d] \cr
\pi_1(M_m,x_o) \ar@{->>}[r] & \pi_1(M^c_m,x_o)
}
$$
are surjective. The Theorem~\ref{thm:smt} implies that the left-hand vertical
map is surjective. The result follows.
\end{proof}

The fact that every component of $M^c_m - M_m$ has codimension $\ge 2$ implies
that every irreducible divisor in $M^c$ is ample.

\begin{lemma}
\label{lem:ample}
For all $m\ge 1$, every irreducible divisor in $M_m^c$ is ample. Moreover, $\Pic
M_m^c$ is finitely generated and the restriction mapping $(\Pic M_m^c)\otimes \Q
\to (\Pic M_m)\otimes \Q$ is an isomorphism.
\end{lemma}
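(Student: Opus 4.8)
The plan is to build everything on top of the fact, proved in Corollary~\ref{cor:ample}, that every irreducible divisor in $M^c$ (equivalently, in $M$, since $M^c-M$ has codimension $\ge 2$) is ample, together with the hypothesis that $\Pic M_m$ is finitely generated of rank $1$ for all $m$. First I would treat the Picard group statement. Since $M_m^c-M_m$ has codimension $\ge 2$ in $M_m^c$ and both varieties are normal (being Galois covers of the unibranch $M^c$, hence unibranch, and smooth in codimension $1$ away from the covering locus — more simply, $M_m$ is smooth and $M_m^c$ is normal so Weil divisors extend), restriction induces a surjection $\Div M_m^c \to \Div M_m$ with kernel supported on $M_m^c-M_m$, which carries no divisors; hence $\Pic M_m^c \to \Pic M_m$ is an isomorphism on the level of Weil divisor class groups. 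Passing to $\Q$-coefficients kills any subtlety between Weil and Cartier divisors on the normal variety $M_m^c$, so $(\Pic M_m^c)\otimes\Q \to (\Pic M_m)\otimes\Q$ is an isomorphism; in particular $\Pic M_m^c$ has rank $1$, and it is finitely generated because $\Pic M_m$ is.

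Next I would establish ampleness. Let $D$ be an irreducible divisor in $M_m^c$, with image $\Dbar$ in $M^c$. The composite $p_m : M_m^c \to M^c$ is finite and surjective, so $\Dbar$ is an irreducible divisor in $M^c$, hence ample by Corollary~\ref{cor:ample}. Fixing a projective imbedding of $\Mbar$, ampleness of $\Dbar$ means that some positive multiple of $\Dbar$ is linearly equivalent (on $M^c$, or after clearing denominators, as a $\Q$-divisor) to the hyperplane class restricted to $M^c$; equivalently, $\Dbar$ meets every complete curve in $M^c$ with positive degree. Now $p_m^\ast \Dbar$ is an effective divisor on $M_m^c$ supported on the preimage $p_m^{-1}(\Dbar)$, and since $p_m$ is finite and surjective $p_m^\ast$ of an ample class is ample. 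Because $\Pic M_m^c$ has rank $1$ (over $\Q$), the class of $D$ in $(\Pic M_m^c)\otimes\Q$ is a positive rational multiple of the class of $p_m^\ast\Dbar$ — positivity follows since both are effective nonzero divisors and the rank-one group is ordered by whether a class has positive degree against the (finitely many, via $p_m$) complete curves pulled back from $M^c$, on which $p_m^\ast\Dbar$ is strictly positive. Hence $D$ is ample as well.

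The step I expect to be the main obstacle is the positivity/sign argument in the rank-one Picard group: knowing that $\Pic M_m^c$ has rank $1$ does not by itself tell us that an arbitrary irreducible divisor $D$ is a \emph{positive} multiple of the ample generator rather than a negative one. The cleanest way around this is exactly the one used already in the proof of Corollary~\ref{cor:ample}: exhibit a complete curve $\Gamma$ in $M_m^c$ (for instance, the preimage under $p_m$, or a component thereof, of a generic complete curve in $M^c$ obtained by a generic linear section of dimension $1$, which is complete because $M^c-M$ — hence its preimage — has codimension $\ge 2$) and observe that $D\cdot\Gamma = \deg(p_m|_\Gamma)\,(\Dbar\cdot p_{m\ast}\Gamma) > 0$ since $\Dbar$ is ample. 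An effective divisor with positive intersection against a complete curve in a variety whose rational Picard group is one-dimensional and generated by an ample class must itself be a positive multiple of that class, hence ample. Once this positivity point is pinned down, the rest is the standard normal-variety bookkeeping sketched above.
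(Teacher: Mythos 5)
Your overall strategy---restrict to $M_m$, use Corollary~\ref{cor:ample} together with the rank-one Picard hypothesis there, and transport the conclusion back to $M_m^c$ using that the complement has codimension $\ge 2$---is the right one, and your identification of the positivity question in a rank-one Picard group as a point needing care is apt (the paper's Corollary~\ref{cor:ample} settles it the same way you propose, by intersecting with a complete curve). But there is a genuine gap at the step where you pass from Weil divisor classes to line bundles on $M_m^c$. The restriction map $\mathrm{Cl}(M_m^c)\to\mathrm{Cl}(M_m)=\Pic M_m$ is indeed an isomorphism because the complement has codimension $\ge 2$, but your assertion that ``passing to $\Q$-coefficients kills any subtlety between Weil and Cartier divisors on the normal variety $M_m^c$'' is false for general normal varieties (on the affine cone over a quadric surface a ruling is a Weil divisor no multiple of which is Cartier), and nothing in the hypotheses of Section~\ref{sec:setup} makes $M_m^c$ $\Q$-factorial: it is only required to be unibranch along codimension-two strata. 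Without knowing that the irreducible divisor $W\subset M_m^c$ is $\Q$-Cartier you cannot speak of its class in $(\Pic M_m^c)\otimes\Q$, compare it with $p_m^\ast\Dbar$, or call it ample. This is exactly the point the paper's proof is designed to handle: since $W'=W\cap M_m$ is ample and $\Pic M_m$ has rank one, $\cO_{M_m}(mW')\cong\cO_{M_m}(d)$ for some positive $m,d$; a section $s$ of $\cO_{M_m}(d)$ with $\div(s)=mW'$ extends (normality plus codimension $\ge 2$) to a regular section of $\cO_{M_m^c}(d)$ whose divisor is $mW$, which shows simultaneously that $mW$ is Cartier and that it is linearly equivalent to the ample class $\cO(d)$. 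This Hartogs-type extension of a defining section is the missing ingredient in your argument; once it is in place, the Picard statement also follows (injectivity of $\Pic M_m^c\to\Pic M_m$ by extending a nowhere-vanishing section, surjectivity mod torsion because $\cO(1)$ already generates $(\Pic M_m)\otimes\Q$).

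A secondary point: you invoke Corollary~\ref{cor:ample} to conclude that the image $\Dbar$ of $D$ is ample in $M^c$, but that corollary only asserts ampleness of irreducible divisors in $M$, not in $M^c$; the $M^c$ statement is the $m=1$ case of the very lemma you are proving, so as written the argument is circular at that step. It is repairable (restrict $\Dbar$ to $M$ first), but only by means of the same section-extension device described above.
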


\begin{proof}
Fix an imbedding $M_m^c \hookrightarrow \P^r$. Suppose that $W$ is an
irreducible divisor in $M_m^c$. Since each component of $M_m^c-M_m$ has
codimension $\ge 2$, $W' := W\cap M_m$ is non-empty. Corollary~\ref{cor:ample}
implies that $W'$ is ample in $M_m$. Since $\Pic M_m$ has rank 1, there are
positive integers $d$ and $m$ such that $\cO_{M_m}(mW') \cong \cO_{M_m}(d)$. A
section of $\cO_{M_m}(mW')$ is a rational section of $\cO_{M_m^c}(d)$ over
$M_m^c$ that is regular on $M_m$. Suppose that $s\in H^0(M_m,\cO_{M_m}(d))$
satisfies $\div(s) = mW'$. Since all components of $M_m^c-M_m$ have codimension
$\ge 2$, $s$ must be regular on $M_m^c$ and have divisor $mW$. So $W$ is ample
in $M_m^c$. The second assertion is easily proved and is left to the reader.
\end{proof}

\begin{corollary}
\label{cor:F2M}
If $W$ is an irreducible divisor in $M_m^c$ such that $F = X_m \cap W$ is a
complete curve, then $\pi_1(F,x_o) \to \pi_1(X_m,x_o)$ and $\pi_1(F,x_o) \to
\pi_1(M_m^c,x_o)$ are surjective.
\end{corollary}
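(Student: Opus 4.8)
The plan is to deduce this from the two Lefschetz-type ingredients already assembled, namely Theorem~\ref{thm:smt} applied to the surface $X_m$, together with the ampleness statement of Lemma~\ref{lem:ample}. First I would apply the Goresky--MacPherson theorem (Theorem~\ref{thm:smt}) to a suitable finite morphism $X_m \to \P^s$. Since $X_m$ is a (possibly singular, but unibranch) complete surface sitting inside $M_m^c$, and $F = X_m \cap W$ is by hypothesis a complete curve, the curve $F$ is cut out on $X_m$ by a hyperplane section for an appropriate projective embedding: this is exactly where Lemma~\ref{lem:ample} enters, since it tells us that $W$ is ample in $M_m^c$, so that some positive multiple $W^{\otimes k}$ is very ample and $kF$ is a hyperplane section of $X_m$ in the corresponding embedding. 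Applying Theorem~\ref{thm:smt} with $c=1$ and $n = \dim X_m = 2$ gives that $\pi_j(F,x_o) \to \pi_j(X_m,x_o)$ is an isomorphism for $j < n(1) = 1$ and surjective for $j = 1$; in particular $\pi_1(F,x_o) \to \pi_1(X_m,x_o)$ is surjective. (One needs the ``$f^{-1}(L)$ proper'' clause of Theorem~\ref{thm:smt}, which applies here since $F$ is complete.)

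The second surjectivity then follows by composition: we have already established, in Lemma~\ref{lem:X2M}, that $\pi_1(X_m,x_o) \to \pi_1(M_m^c,x_o)$ is surjective for all $m \ge 1$. Composing,
$$
\pi_1(F,x_o) \twoheadrightarrow \pi_1(X_m,x_o) \twoheadrightarrow \pi_1(M_m^c,x_o)
$$
is surjective, which is the remaining assertion.

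There is one technical point to be careful about, and it is the main obstacle: $X_m$ is singular (its singularities are isolated, by the choice of $X$ as a generic transverse linear section, but they are nonempty in general), so one must check that the relevant hypotheses of Theorem~\ref{thm:smt} are genuinely met. The theorem as stated requires a complex algebraic \emph{manifold} with a morphism to $\P^N$ having finite fibers; for the singular surface $X_m$ one should instead invoke the local Lefschetz theorem \cite[p.~153]{smt} together with the fact that $X_m$ is unibranch at each of its (isolated) singular points to run the stratified Morse theory argument, exactly as in the discussion preceding Lemma~\ref{lem:X2M}. Alternatively, since $W$ is ample, the curve $F$ can be arranged to avoid the finitely many singular points of $X_m$ after passing to a higher multiple and taking a generic member of the linear system; then $F$ lies in the smooth locus $X_m^{\mathrm{sm}}$ and one applies the theorem on that open manifold, using that $\pi_1(X_m^{\mathrm{sm}}, x_o) \to \pi_1(X_m, x_o)$ is surjective (again by unibranchness and Lemma~\ref{lem:surjective}). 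Either route handles the singularities; the rest is formal.
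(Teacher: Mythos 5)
Your main line --- ampleness of $W$ from Lemma~\ref{lem:ample}, a Lefschetz-type surjectivity for $\pi_1(F,x_o)\to\pi_1(X_m,x_o)$, then composition with Lemma~\ref{lem:X2M} --- is exactly the paper's strategy, and you have correctly isolated the singularities of $X_m$ as the one genuine obstacle. However, of the two workarounds you propose, the second does not work: you are not free to replace $F$ by a generic member of $|kF|$, because $F = X_m\cap W$ is a \emph{specific} curve imposed by the hypothesis (in the application it is a component of the inverse image of $C=D\cap X$ in $X_m$), and surjectivity of $\pi_1(F',x_o)\to\pi_1(X_m,x_o)$ for a generic deformation $F'$ of $F$ gives no information about the given $F$. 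Moreover $F$ may genuinely pass through singular points of $X_m$, so one cannot simply work on the smooth locus.

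The first workaround is the right one, but as written it is too vague to carry the argument: unibranchness of $X_m$ at its singular points does not by itself yield the Lefschetz surjectivity. The missing construction is the following. Set $X_F := X_m - \{x\in X_m^\sing : x\notin F\}$, i.e., delete only those singular points that do \emph{not} lie on $F$. Then $F\subset X_F$ and the complement $X_F-F$ is smooth, hence a local complete intersection, which is precisely the hypothesis needed to apply the Lefschetz Theorem with Singularities \cite[p.~153]{smt} to the ample curve $F$ and conclude that $\pi_1(F,x_o)\to\pi_1(X_F,x_o)$ is surjective. Since $X_m$ is unibranch at each of the finitely many deleted points, Lemma~\ref{lem:surjective} then gives surjectivity of $\pi_1(X_F,x_o)\to\pi_1(X_m,x_o)$, and composing yields the first assertion. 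The remainder of your argument (composing with Lemma~\ref{lem:X2M} for the second assertion) is correct and is what the paper does.
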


\begin{proof}
Lemma~\ref{lem:ample} implies that $W$ is an ample divisor in $M^c$. Let
$$
X_F := X_m - \{x\in X_m^\sing : x\notin F\}.
$$
Then $F\subset X_F$. Since $X_F - F$ is smooth, it is a local complete
intersection. The Lefschetz Theorem with Singularities \cite[p.~153]{smt}
implies that $\pi_1(F,x_o) \to \pi_1(X_F,x_o)$ is surjective. Since $X_m$ is
unibranch at each point of $X_m-X_F$, Lemma~\ref{lem:surjective} implies that
$\pi_1(X_F,x_o) \to \pi_1(X_m,x_o)$ is surjective. Consequently, $\pi_1(F,x_o)
\to \pi_1(X_m,x_o)$ is also surjective. The final assertion follows from
Lemma~\ref{lem:X2M}.
\end{proof}

\subsection{Proof of Theorem~\ref{thm:strictness}}

Set $C$ = $D\cap X$. Denote its normalization by $\pi:\Ctilde \to C$. We may
take $x_o$ to be a smooth point of $C$. Denote its inverse image in $\Ctilde$ by
$\xtilde_o$. Because $C$ may have multiple branches at the singular points of
$X$, the homomorphism $\pi_1(\Ctilde,\xtilde_o) \to \pi_1(C,x_o)$ may not be
surjective.

We will show that the image of $\pi_1(\Ctilde,\xtilde_o)^\wedge \to \G_o^\wedge$
has finite index in the image of the monodromy representation $\rho :
\pi_1(C,x_o)^\wedge \to \G_o^\wedge$, even though the image of
$\pi_1(\Ctilde,x_o)^\wedge$ in $\pi_1(C,x_o)^\wedge$ may have infinite index in
$\pi_1(C,x_o)^\wedge$. This is the only point in the proof where we work with
profinite fundamental groups. It is here where we use the assumption that the
Picard number is 1.

\begin{lemma}
\label{lem:components}
Suppose that $m\ge 1$. If $C'$ is a component of the inverse image $C_m$ of $C$
in $X_m$, then
\begin{enumerate}

\item $C'$ is an ample curve in $X_m$,

\item \label{it:degree}
$[C_m] = d [C'] \in (\Pic X_m)\otimes \Q$, where $d$ is the number of
irreducible components of $C_m$,

\item $\pi_1(C',x_o) \to \pi_1(M^c_m,x_o)$ is surjective.

\end{enumerate}
\end{lemma}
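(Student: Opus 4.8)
The plan is to realise each component $C'$ of $C_m$ as $X_m$ intersected with a component of the pullback of $D$ to the covering $M_m^c$, and then to read off the three assertions from Lemma~\ref{lem:ample} and Corollary~\ref{cor:F2M}. Fix $m\ge 1$ and set $G := \operatorname{im}(\pi_1(M^c,x_o)\to\Aut(V_o/mV_o))$, so that $p:M_m^c\to M^c$ is a connected Galois covering with group $G$ and (since $\pi_1(X,x_o)\to\pi_1(M^c,x_o)$ is onto, by Theorem~\ref{thm:smt} and Lemma~\ref{lem:surjective}) so is $X_m\to X$. Put $D_m := p^{-1}(D)$: it is a divisor in $M_m^c$, each of whose components is an irreducible divisor, hence ample by Lemma~\ref{lem:ample}. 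Because $D$ is irreducible and $L$ is generic, $C=D\cap X$ is an irreducible curve (apply Bertini to the irreducible divisor $D\cap M$ of $M$), so $G$ permutes the components $C_1',\dots,C_d'$ of $C_m$ transitively and likewise the components $D_1',\dots,D_e'$ of $D_m$. As $C_m = X_m\cap D_m$, and as two distinct components of $D_m$ meet in codimension $2$ while $X_m$ is transverse to the relevant strata, each $C_i'$ is contained in a unique $D_j'$; since $X_m$ is a complete surface meeting each ample $D_j'$, this defines a surjective $G$-equivariant map $\{C_i'\}\to\{D_j'\}$.

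The crucial step is that this map is a bijection, equivalently that each $X_m\cap D_j'$ is irreducible. To see $d=e$ I would compare monodromy: $C$ is a generic linear section of the irreducible variety $D$ of dimension $1$, so by the Lefschetz theorem for fundamental groups of (possibly singular) varieties (see \cite{smt}) the map $\pi_1(C,x_o)\to\pi_1(D,x_o)$ is surjective; hence $\operatorname{im}(\pi_1(C)\to G)=\operatorname{im}(\pi_1(D)\to G)$, and $d$ and $e$, being the indices of these two images in $G$, agree. A surjection of finite sets of equal cardinality is a bijection, so each $D_j'$ contains exactly one $C_i'$. Finally, $X_m\setminus M_m$ is finite and every component of the pure one-dimensional set $X_m\cap D_j'$ is positive-dimensional, so none of them is supported on $X_m\setminus M_m$; therefore $X_m\cap D_j' = C_i'$ is irreducible.

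Granting this, (1)--(3) follow quickly. For (1): $C_i' = X_m\cap D_{j(i)}'$ is the restriction to the closed subvariety $X_m\subseteq M_m^c$ of the ample divisor $D_{j(i)}'$, hence an ample curve in $X_m$. For (2): let $r:\Pic M_m^c\to\Pic X_m$ be restriction, so $[C_i']=r[D_{j(i)}']$ (the cycle $C_m$ is reduced, since $X_m\to X$ is \'etale over the generic point of $C$). Because $\Pic M_m^c$ has rank $1$ (Lemma~\ref{lem:ample} and the rank-one hypothesis on $\Pic M_m$) and $G$ preserves the ample ray, $G$ acts trivially on $(\Pic M_m^c)\otimes\Q$; as the $D_j'$ are $G$-conjugate, all classes $[D_j']$ coincide in $(\Pic M_m^c)\otimes\Q$, so all $[C_i']$ coincide in $(\Pic X_m)\otimes\Q$ and $[C_m]=\sum_{i=1}^d[C_i']=d[C']$. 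For (3): apply Corollary~\ref{cor:F2M} with the irreducible divisor $W=D_{j(i)}'$ of $M_m^c$ and $F=X_m\cap W=C'$, a complete curve; it gives that $\pi_1(C',x_o)\to\pi_1(X_m,x_o)$ and $\pi_1(C',x_o)\to\pi_1(M_m^c,x_o)$ are surjective.

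The main obstacle is exactly the irreducibility of $X_m\cap D_j'$: a component of an ample curve on a surface need not be ample — two rulings of $\P^1\times\P^1$ sum to an ample divisor neither component of which is ample — so this cannot be detected by intersection theory on $X_m$ alone, and one is forced to argue through the combinatorics of the $G$-action, which in turn rests on the Lefschetz surjectivity $\pi_1(C)\twoheadrightarrow\pi_1(D)$ for the possibly singular divisor $D$. The remaining points, which are routine, are that the genericity of $L$ really does supply the transversality and Bertini-irreducibility statements used above and that the basepoint $x_o$ may be taken on the chosen component $C'$.
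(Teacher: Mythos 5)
Your overall strategy matches the paper's: realize each component $C'$ as $X_m\cap W$ for a component $W$ of the preimage $D_m$ of $D$, then get (1) from Lemma~\ref{lem:ample}, (2) from the rank-one Picard group together with transitivity of the Galois action, and (3) from Corollary~\ref{cor:F2M}. You have also correctly isolated the crux, namely the irreducibility of $X_m\cap W$. But your proof of that crux has a genuine gap. You deduce $d=e$ from the surjectivity of $\pi_1(C,x_o)\to\pi_1(D,x_o)$ for the generic curve section $C$ of $D$. No Lefschetz theorem available here gives this: Theorem~\ref{thm:smt} requires the ambient variety to be a complex manifold, and the version with singularities invoked in Corollary~\ref{cor:F2M} requires the complement of the section to be a local complete intersection; $D$ is an arbitrary irreducible divisor and may be neither. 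The statement is in fact false for general singular varieties: if $D$ is obtained from a smooth variety by identifying two points (an isolated failure of unibranchness), then $\pi_1(D)$ acquires a free factor that a generic linear curve section, which misses the identified point, cannot see. A second, related problem is that your counts $d=[G:\im \pi_1(C)]$ and $e=[G:\im \pi_1(D)]$ enumerate \emph{connected} components of $C_m$ and $D_m$, whereas Lemma~\ref{lem:ample} and the Picard argument require \emph{irreducible} components; for a non-normal $D$ a connected component of the covering $D_m\to D$ need not be irreducible. Finally, even the equality of connected-component counts can fail: for two irreducible components $W,W'$ of $D_m$ lying in one connected component, $W\cap W'$ can have codimension $\ge 3$ in $M_m^c$, in which case the generic $X_m$ misses it and the preimage of $C$ in that connected component is disconnected.

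The paper disposes of the crux with one application of the Bertini irreducibility theorem: $W$, being an irreducible component of $D_m$, is an irreducible variety equipped with a finite morphism to $\P^N$ (via $\Mbar_m\to\Mbar\hookrightarrow\P^N$), and $X_m\cap W$ is its section by the generic linear space $L$ of codimension $\dim W -1$; Bertini, which holds for arbitrary irreducible varieties with a finite map to projective space \cite[p.~151]{smt}, gives irreducibility directly, with no $\pi_1$ input and no smoothness hypothesis on $W$ or $D$. Your counting argument can be repaired by replacing $D$ with its smooth locus (to which Theorem~\ref{thm:smt} does apply, since $X\cap D$ is a generic linear section of it) and noting that the irreducible components of $D_m$ and $C_m$ are the closures of the connected components of the preimages of the smooth loci; but as written the key step is not justified.
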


\begin{proof}
Denote the inverse image of $D$ in $M_m$ by $D_m$. This may be reducible. Let
$W$ the irreducible component of $D_m$ that contains $C'$. Since $X$ is a
generic linear section of $M_m^c$, Bertini's Theorem \cite[p.~151]{smt} implies
that $X_m\cap W$ is irreducible and therefore equal to $C'$.
Lemma~\ref{lem:ample} implies that $W$ is ample in $M_m$, which implies that
$C'$ is ample in $X_m$.

Since $\Pic M_m$ has rank $1$, the classes $[D_m]$ and $[W]$ are proportional in
$\Pic M_m$ mod torsion. Their restrictions $[C_m]$ and $[C']$ to $X_m$ are
therefore proportional mod torsion. The second assertion follows as $X_m \to X$
is a Galois covering and the Galois group acts transitively on the components of
$C_m$. The last assertion follows from Corollary~\ref{cor:F2M}.
\end{proof}

For $m\ge 1$, set $\G_{o,m} = \Aut (V_o/mV_o)$. This is a finite group
isomorphic to $\GL_N(\Z/m\Z)$.

The formulation of the following lemma and its proof were contributed by Madhav
Nori. They avoid a gap in the proof of a previous version of the lemma.

\begin{lemma}
\label{lem:normalization}
For all $m\ge 1$, the index of the image of $\pi_1(\Ctilde_o,\xtilde_o) \to
\G_{o,m}$ in the image of $\pi_1(C,x_o) \to \G_{o,m}$ is bounded by $C\cdot C$.
\end{lemma}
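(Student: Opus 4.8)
The plan is to compare the two monodromy images by exploiting the covering space $X_m \to X$ and the fact that $C' \subset X_m$ is an ample (hence connected in a strong sense) curve. Fix $m\ge 1$. Write $\rho_m : \pi_1(C,x_o) \to \G_{o,m}$ for the mod-$m$ monodromy and let $\Gamma_m = \im\rho_m$. The covering $C_m \to C$ pulled back from $X_m \to X$ has Galois group (a quotient of) $\Gamma_m$, and its connected components correspond to cosets of $\im\big(\pi_1(\Ctilde_o,\xtilde_o)\big)$ in $\Gamma_m$ — more precisely, the number of components of $C_m$, call it $d$, equals the index $[\Gamma_m : \im(\pi_1(\Ctilde_o,\xtilde_o)\to\G_{o,m})]$ once one checks that the normalization $\Ctilde_o$ surjects onto a single component of $C_m$ and that the component group is a transitive $\Gamma_m$-set. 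So the content of the lemma is exactly the estimate $d \le C\cdot C$.

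First I would make the identification of $d$ with the coset index precise: the normalization $\Ctilde$ of $C$ is connected (as $C = D\cap X$ is irreducible, $D$ being irreducible and $X$ a generic linear section, by Bertini), so its image in $\G_{o,m}$ acts with a single orbit on the fiber of $C_m\to C$ over $x_o$, and the orbit has size $d$ because the fiber meets each of the $d$ components of $C_m$ (each component is irreducible, hence itself connected, and the Galois group $\Gamma_m$ permutes them transitively by Lemma~\ref{lem:components}(iii) together with connectedness of $X_m$). Hence $d = [\Gamma_m : \im\pi_1(\Ctilde_o)]$, the quantity we must bound.

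Next, the geometric input. By Lemma~\ref{lem:components}, each component $C'$ of $C_m$ is an ample curve in $X_m$ and $[C_m] = d\,[C'] \in (\Pic X_m)\otimes\Q$. The covering $X_m \to X$ has some finite degree $\delta_m = |\Gamma_m|$; since it is finite flat, $[C_m]$ is the pullback of $[C]$, so intersecting in $X_m$ gives
$$
C_m \cdot C_m = \delta_m \, (C\cdot C),
$$
while $[C_m] = d[C']$ yields $C_m\cdot C_m = d^2\,(C'\cdot C')$. On the other hand the number of components $d$ of $C_m$ times the degree of each component map $C'\to C$ equals $\delta_m$, and that degree is at least $1$, so $d \le \delta_m$; combining, $d^2 (C'\cdot C') = \delta_m (C\cdot C) \ge d\,(C\cdot C)$, hence $d\,(C'\cdot C') \le C\cdot C \cdot (\delta_m/d)$... — here I would instead argue directly: $C'\cdot C' \ge 1$ because $C'$ is an ample curve on the surface $X_m$ (an ample divisor on a surface has positive self-intersection), so $d^2 \le d^2 (C'\cdot C') = C_m\cdot C_m$, and since $C_m\cdot C_m = \delta_m(C\cdot C)$ and $\delta_m \le$ ... this still needs $\delta_m$ controlled. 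The cleaner route, which I would adopt, uses the Galois action fibrewise over a single component: restrict the covering $X_m\to X$ to $C'$; its image is the whole of $X_m$'s monodromy only on a subgroup, and $C' \to C$ has degree $\delta_m/d$, so $C_m\cdot C_m = d \cdot (\delta_m/d)^2 (C'\cdot C')\cdot$... The robust statement I will actually prove is: $d \le C_m\cdot C'  = (\delta_m/d)(C\cdot C)/1$, wait.

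Let me state the step I will carry out rather than flailing: I will show $C\cdot C \ge d$ by the computation $C\cdot C = \frac{1}{\delta_m} C_m\cdot C_m = \frac{1}{\delta_m}\cdot d \cdot (C_m\cdot C') = \frac{d}{\delta_m}\cdot \frac{\delta_m}{d}\cdot(C'\cdot C')\cdot$ — no. The honest plan: the preimage $C_m$ has $d$ components each meeting the generic fibre of $X_m \to X$, so $C_m\cdot C' = \sum_{C''} C''\cdot C'$ and since $C'$ is ample and each $C''$ is an effective curve, $C''\cdot C' \ge 1$ for all $d$ components $C''$; thus $C_m\cdot C' \ge d$. But $C_m = \pi^*C$ where $\pi: X_m\to X$ is the covering, so $C_m\cdot C' = (\pi^*C)\cdot C' = C\cdot(\pi_*C')$ by projection formula, and $\pi_*C' = (C\cdot$ something$)$; precisely $\pi_*[C'] = \frac{1}{d}\pi_*[C_m] = \frac{\delta_m}{d}[C]$ pushed down — actually $\pi_*\pi^*[C] = \delta_m [C]$ so $\pi_*[C_m] = \delta_m[C]$, hence $\pi_*[C'] = (\delta_m/d)[C]$, giving $C_m\cdot C' = C\cdot\pi_*C' = (\delta_m/d)(C\cdot C)$. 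Since also $C_m\cdot C' \ge d$, we get $(\delta_m/d)(C\cdot C)\ge d$, i.e. $C\cdot C \ge d^2/\delta_m$, which is weaker than wanted. So I instead use: each of the $\delta_m/d$ sheets of $C'\to C$ meets — hmm, the clean inequality is $C'\cdot C' \ge 1$, then $C_m\cdot C_m = d^2(C'\cdot C')\ge d^2$ and $C_m\cdot C_m = \delta_m(C\cdot C)$, so $C\cdot C \ge d^2/\delta_m$; to upgrade, note $C'\to C$ has degree $\delta_m/d$ so in fact $C_m\cdot C' = (\delta_m/d)(C\cdot C)$ and separately $C_m\cdot C'\ge d\cdot 1$ forces $C\cdot C \ge d^2/\delta_m$ again. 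The genuinely correct argument, which is what I will write, observes that $[C_m]=d[C']$ in $\Pic\otimes\Q$ together with the projection formula forces $C\cdot C = \tfrac{1}{\delta_m}(C_m\cdot C_m) = \tfrac{d^2}{\delta_m}(C'\cdot C')$ and $C'\cdot C' \ge \delta_m/d$ since $C'$ is ample and its class is $\tfrac{d}{\delta_m}\pi^*[C]$, wait that gives $C'\cdot C' = (d/\delta_m)^2 \delta_m (C\cdot C) = (d^2/\delta_m)(C\cdot C)$, consistent but circular.

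Given the space, I will present the argument at the level of: use connectedness of $\Ctilde$ and transitivity to identify $d$ with the index; use Lemma~\ref{lem:components} and ampleness to get $C'\cdot C'\ge 1$; conclude $C\cdot C = (1/\delta_m)C_m\cdot C_m \ge (1/\delta_m)\cdot d\cdot C_m\cdot C'$ and evaluate $C_m\cdot C' = (\delta_m/d)(C\cdot C)$ — so the binding inequality must come from the \emph{number of components meeting $C'$}, namely all $d$ of them since $X_m$ is a normal surface and $C'$ is ample so $C'$ meets every effective curve; thus $C_m\cdot C' \ge d$, while $C_m\cdot C' = (\delta_m/d)(C\cdot C)$, giving $C\cdot C \ge d^2/\delta_m$... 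I now believe the intended bound genuinely is proved via: the $d$ components of $C_m$ are pairwise in the same $\Pic\otimes\Q$ class, $C'$ is ample, so $C'\cdot C'' \ge 1$ for each of the $d-1$ other components \emph{and} $C'\cdot C' \ge 1$, hence $C_m\cdot C' \ge d$; pushing down and using $\pi_*[C'] = \tfrac{1}{d}\delta_m[C]$ yields the displayed equality, and the $C\cdot C\ge d$ conclusion then requires $\delta_m/d \le C\cdot C$ — equivalently $\deg(C'\to C)\le C\cdot C$, which holds because $C'\to C$ is the restriction of the finite covering and its degree divides $\delta_m$, and one bounds it by $C'\cdot(\text{generic fibre component})\cdot$... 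The main obstacle, and the step I expect to be delicate, is exactly this bookkeeping: pinning down $\delta_m/d = \deg(C'\to C)$ and showing it is at most $C\cdot C$ via the ampleness of $C'$ on the surface $X_m$ together with $[C_m]=d[C']$. Everything else — connectedness of $\Ctilde$, Bertini irreducibility of $C$, transitivity of the Galois action on components, the projection formula — is routine once Lemma~\ref{lem:components} is in hand.
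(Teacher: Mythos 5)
There is a genuine gap, and you identify it yourself: after correctly reducing the lemma to the bound $d\le C\cdot C$, where $d=[G_m:K_m]$ is the number of irreducible components of $C_m$ (this identification, via transitivity of the Galois action and the fact that $K_m$ is the stabilizer of the component $C'$, agrees with the paper), your intersection computation on $X_m$ only yields $C\cdot C=(d^2/\delta_m)(C'\cdot C')\ge d^2/\delta_m$, which is weaker than what is needed. To conclude $d\le C\cdot C$ from this you would need the supplementary inequality $\deg(C'\to C)=\delta_m/d\le C'\cdot C'$, and this is precisely the step you flag as ``delicate'' and never prove. Note also that at one point you misstate the needed inequality as $\delta_m/d\le C\cdot C$; the correct target is $\delta_m/d\le C'\cdot C'$.

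The missing idea (due to Nori, in the paper's formulation) is to interpose the intermediate quotient $Y_m=K_m\backslash X_m$, giving a tower $X_m\to Y_m\xrightarrow{\;h\;}X$ with $\deg h=[G_m:K_m]$. Because $K_m$ stabilizes $C'$, the image $C''$ of $C'$ in $Y_m$ maps \emph{birationally} onto $C$, so $h^\ast[C]=[G_m:K_m][C'']$ in $(\Pic Y_m)\otimes\Q$ and the projection formula gives $[G_m:K_m]\,(C''\cdot C'')=C''\cdot h^\ast C=C\cdot C$; since $C''$ is ample, $C''\cdot C''\ge 1$ and the bound follows. This construction is exactly what supplies your missing inequality: pulling back along the unramified covering $X_m\to Y_m$ of degree $|K_m|$ gives $C'\cdot C'=|K_m|\,(C''\cdot C'')\ge |K_m|=\deg(C'\to C)$. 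Without passing to $Y_m$ (or an equivalent device), the direct computation on $X_m$ does not close, so the proposal as written does not prove the lemma.
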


\begin{proof}
Set
$$
G_m = \im\{\pi_1(C,x_o) \to \G_{o,m}\}
\text{ and }
K_m = \im\{\pi_1(\Ctilde_o,\xtilde_o) \to \G_{o,m}\}. 
$$
Corollary~\ref{cor:F2M} implies that $G_m$ is also the image of $\pi_1(X_m,x_o)$
in $\G_{o,m}$. Consequently, the inverse image $C_m$ of $C$ in $X_m$ is a
connected Galois covering of $C$. Choose a lift $x_o'$ of $x_o$ to $C_m$.
Denote the component of $C_m$ that contains
$x_o'$ by $C'$.

The group $G_m$ acts on $X_m$ with quotient $X$. The stabilizer of the component
$C'$ is $K_m$. Let $Y_m = K_m\bs X_m$ so that we have the tower
$$
\xymatrix{
X_m \ar[r] & Y_m \ar[r]^h & X
}
$$
of unramified coverings.  Denote the image of $C'$ in $Y_m$ by $C''$. Since
$K_m$ is the stabilizer of $C'$, the morphism $C'' \to C$ induced by $h$ is
birational. Lemma~\ref{lem:components}(\ref{it:degree}) implies that
$$
h^\ast[C] = (\deg h) [C'] = [G_m:K_m][C'] \in (\Pic Y_m)\otimes\Q
$$
and that $C''$ is ample. Since $\deg h = [G_m:K_m]$, the projection formula
implies that
$$
[G_m:K_m] (C''\cdot C'') = (C''\cdot h^\ast C) = (C\cdot C).
$$
Since $C''$ is ample in $Y_m$, $C''\cdot C'' > 0$, which gives the desired
inequality
$$
[G_m:K_m] = \frac{C\cdot C}{C''\cdot C''} \le C\cdot C.
$$
\end{proof}

Combining this with Corollary~\ref{cor:F2M}, we obtain:

\begin{corollary}
\label{cor:normalization}
For all $m\ge 1$, the index of the image of $\pi_1(\Ctilde,\xtilde_o)^\wedge \to
\G_o^\wedge$ in the image of $\pi_1(M^c,x_o)^\wedge \to \G_o^\wedge$ is finite
and bounded by $C\cdot C$. \qed
\end{corollary}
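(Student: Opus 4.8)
The plan is to deduce the corollary from Lemma~\ref{lem:normalization} by letting $m\to\infty$, once the two images appearing in $\G_o^\wedge$ have been identified with the finite images $G_m$ and $K_m$ of that lemma. I will regard $\G_o^\wedge$ as the completion of $\G_o$ along the congruence filtration $\{\G_o[m]\}$, so that there are reduction maps $q_m:\G_o^\wedge\to\G_{o,m}$ with $\bigcap_m\ker q_m=\{1\}$ and $\ker q_{mm'}\subseteq\ker q_m\cap\ker q_{m'}$, and so that for any $\pi_1$ carrying a monodromy representation to $\G_o$, the image of its profinite completion in $\G_o^\wedge$ is the closed subgroup whose image under each $q_m$ is the corresponding mod-$m$ monodromy image.

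First I would replace $\pi_1(M^c,x_o)$ by $\pi_1(C,x_o)$ and $\pi_1(\Ctilde,\xtilde_o)$ by $\pi_1(\Ctilde_o,\xtilde_o)$. For the first: $D$ is an irreducible divisor in $M^c$, hence ample by Lemma~\ref{lem:ample}, and $X$ is a complete surface, so $C=D\cap X=X_1\cap D$ is a complete curve; Corollary~\ref{cor:F2M}, applied with $m=1$ and $W=D$, then gives that $\pi_1(C,x_o)\to\pi_1(M^c,x_o)$ is surjective, so these share the same image in $\G_o^\wedge$ and the same mod-$m$ monodromy image $G_m$ for every $m$. For the second: $\Ctilde$ is normal, hence unibranch by Lemma~\ref{lem:unibranch}, and $\Ctilde_o$ is a nonempty Zariski-open subset of it, so Lemma~\ref{lem:surjective} gives $\pi_1(\Ctilde_o,\xtilde_o)\twoheadrightarrow\pi_1(\Ctilde,\xtilde_o)$, and again these share the same image in $\G_o^\wedge$ and the same mod-$m$ image $K_m$.

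I would then write $Q\subseteq P$ for the images of $\pi_1(\Ctilde,\xtilde_o)^\wedge$ and $\pi_1(M^c,x_o)^\wedge$ in $\G_o^\wedge$, the inclusion coming from $\Ctilde\to C\to M^c$. By the previous step $q_m(P)=G_m$ and $q_m(Q)=K_m$, so Lemma~\ref{lem:normalization} gives $[q_m(P):q_m(Q)]\le C\cdot C$ for every $m$. Since $Q$ is closed in the profinite group $P$ and the subgroups $P\cap\ker q_m$ are cofinal among the open normal subgroups of $P$, the index $[P:Q]$ equals $\sup_m[P:Q\cdot(P\cap\ker q_m)]=\sup_m[q_m(P):q_m(Q)]$, which is thus finite and at most $C\cdot C$, as required.

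I do not expect a genuine obstacle here — the work is in Lemma~\ref{lem:normalization} and Corollary~\ref{cor:F2M} — but the step that wants care is the passage to the limit: one needs to know that the image of $\pi_1(M^c,x_o)^\wedge$ in $\G_o^\wedge$ is reconstructed from its finite quotients $G_m$, i.e., that $\G_o^\wedge$ is the congruence completion and the $\G_o[m]$ form a cofinal system, without which the mod-$m$ bound $[G_m:K_m]\le C\cdot C$ would not control $[P:Q]$. The Picard-rank-one hypothesis, which is what makes Lemma~\ref{lem:normalization} go through, therefore enters this proof only through that lemma.
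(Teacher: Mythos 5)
Your argument is correct and is exactly the deduction the paper intends (the paper gives no separate proof, stating the corollary as an immediate combination of Lemma~\ref{lem:normalization} with Corollary~\ref{cor:F2M}): use Corollary~\ref{cor:F2M} with $m=1$, $W=D$ to replace $\pi_1(M^c,x_o)$ by $\pi_1(C,x_o)$, invoke the uniform mod-$m$ bound $[G_m:K_m]\le C\cdot C$, and pass to the inverse limit. The interpretive point you flag --- that $\G_o^\wedge$ must be read via its congruence quotients $\G_{o,m}$, i.e.\ as mapping to $\Aut(V_o\otimes\Zhat)$ --- is consistent with the paper's conventions (cf.\ the introduction and the targets $\Sp_g(\Zhat)$ of the main theorems), so no gap remains.
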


Denote the normalization of $D$ by $\Dtilde \to D$. Since $X$ is a generic
linear section of $M^c$ and $D$, $C$ is not contained in the singular locus of
$D$. It follows that there is a smooth Zariski open subset $U$ of $C$ that lies
in the smooth locus of $D$. It can therefore be regarded as a curve in
$\Dtilde$. Corollary~\ref{cor:normalization} and the commutativity of the
diagram
$$
\xymatrix{
\pi_1(U,x_o) \ar[r]\ar@{->>}[d] & \pi_1(\Dtilde,x_o)\ar[d] \cr
\pi_1(\Ctilde,\xtilde_o) \ar[r] & \pi_1(M^c,x_o)
}
$$
implies that $\im\{\pi_1(\Dtilde,x_o)^\wedge \to \G_o^\wedge\}$ has finite index
in $\im\{\pi_1(M^c,x_o)^\wedge \to \G_o^\wedge\}$. Theorem~\ref{thm:strictness}
now follows from Proposition~\ref{prop:normalization}.

\section{Counterexamples}
\label{sec:examples}

The following examples suggest that the hypotheses of
Theorem~\ref{thm:strictness} cannot easily be relaxed.

\begin{example}
Suppose that $g\ge 3$ and that $\ell\ge 3$. Suppose that $P\in \A_g[\ell]$. Let
$M = M^c =\Bl_P \A_g[\ell]$, the blow-up of $\A_g[\ell]$ at $P$. Take $\V$ to be
the standard local system over $\A_g[\ell]$. Take $E=D$ to be the exceptional
divisor. Then the image of $\pi_1(E,e_o)$ in $\Sp_g(\Z)$ is trivial, so that the
conclusions of Theorem~\ref{thm:strictness} do not hold in this case. All
hypotheses hold except for the condition (\ref{item6}) on Picard groups.
\end{example}

\begin{example}
One can also formulate a version of Theorem~\ref{thm:strictness} for orbifolds.
The genus $2$ example presented in Section~\ref{sec:genus2} with $\ell=1$ shows
that the conclusion of Theorem~\ref{thm:strictness} fail in this case. Here
$M=M^c=\A_2$. Condition (\ref{item6}), that $\rank\Pic M_m=1$, holds when $m=1$,
but fails for at least some $m>1$. The codimension of $\Mbar-M^c$ is 2, so
condition (\ref{item4}) also fails to hold.
\end{example}

The final example shows that if $D$ is an irreducible smooth hyperplane section
of a non-compact algebraic manifold $M$ of dimension $>2$, then the inclusion
$D\hookrightarrow M$ may not induce an isomorphism on fundamental groups. In
such cases, Theorem~\ref{thm:smt} implies that $D$ is not generic.

\begin{example}
Suppose that $\ell\ge 3$ is an odd integer. Since $\Sp_3(\Z)[\ell]\to
\Sp_3(\Z/2\Z)$ is surjective, the hyperelliptic locus $\cH_3[\ell]$ in
$\M_3[\ell]$ is irreducible. Since the level $\ell$ subgroup of the mapping
class group is torsion free, it follows that the level $\ell$ subgroup of the
hyperelliptic mapping class group is also torsion free. This implies that
$\cH_3[\ell]$ is a smooth divisor in $\M_3[\ell]$. Since $\Pic \A_3[\ell]$ has
rank 1, the locus of hyperelliptic jacobians (the closure of the image of
$\cH_3[\ell]$ in $\A_3[\ell]$) is a hyperplane section of $\A_3[\ell]$. It
follows that $\cH_3[\ell]$ is the inverse image of a hyperplane section of
$\A_3[\ell]$. The image of $\pi_1(\cH_3[\ell],x_o) \to \pi_1(\M_3[\ell],x_o)$
and its profinite completion have infinite index as the first group is the level
$\ell$ subgroup of the hyperelliptic mapping class group, which has infinite
index in the genus 3 mapping class group. However, a result of A'Campo
\cite{acampo} implies that the image of the monodromy homomorphism
$\pi_1(\cH_3[\ell],x_o) \to \Sp_3(\Z)$ has finite index in $\Sp_3(\Z)$. The
divisor $\cH_3[\ell]$ is not generic as, for example, its image $\cH_3$ in
$\M_3$ is not transverse to the the singular locus of the boundary divisor
$\Delta_0$ of $\overline{\M}_3$.
\end{example}

\section{Discussion of Strictness Theorems}
\label{sec:discussion}

Consider the diagram
$$
\xymatrix{
Z \ar[r]^f & Y \ar[r]^h & X
}
$$
of morphisms of varieties where $X$ and $Z$ are smooth and $f$ is dominant.
Techniques developed by Deligne in \cite[\S 8]{deligne:hodge} to prove
``strictness theorems'' imply the following result:

\begin{proposition}
\label{prop:abelian}
The image of $H_1(Z;\Z) \to H_1(X;\Z)$ has finite index in the image of
$H_1(Y;\Z) \to H_1(X;\Z)$.
\end{proposition}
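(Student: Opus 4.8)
The statement is a non-abelian-to-abelian ``strictness'' assertion, and the cleanest route goes through mixed Hodge theory, exactly as Deligne set up in \cite[\S 8]{deligne:hodge}. First I would reduce to a statement about Hodge structures: the cohomology groups $H^1(X;\Q)$, $H^1(Y;\Q)$, $H^1(Z;\Q)$ all carry functorial mixed Hodge structures, and the maps $f^\ast, h^\ast$ are morphisms of MHS. Since we want a statement up to finite index on integral homology, it is equivalent to prove that the image of $H^1(X;\Q) \to H^1(Z;\Q)$ equals the image of $H^1(X;\Q) \to H^1(Y;\Q) \to H^1(Z;\Q)$; dualizing then gives the claim for $H_1$ with $\Q$-coefficients, and a finite-index statement on $H_1(\,\cdot\,;\Z)$ follows since $H_1(\,\cdot\,;\Z)$ is finitely generated. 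So the target becomes: $\im(h^\ast: H^1(X) \to H^1(Y))$ and $\im(h^\ast f^\ast : H^1(X)\to H^1(Z))$ have the same rank, i.e. $\ker(f^\ast|_{\im h^\ast}) = 0$ after passing to $\Q$, or rather that $f^\ast$ restricted to $\im(h^\ast)$ is injective up to finite kernel.

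\emph{Key steps.} (1) Replace $Z$ by a smooth variety mapping to $Y$ with the same image on $H_1$ — we may shrink $Z$ to a Zariski-open subset over which $f$ is a smooth fibration onto a locally closed smooth subvariety of $Y$, since such an open subset has surjective $\pi_1$ hence surjective $H_1$ onto the image. (2) The essential input is the \emph{theorem of the fixed part} / semisimplicity: for a smooth variety $Z$ with a morphism to $Y$, the image of $H^1(Y;\Q)\to H^1(Z;\Q)$ underlies a sub-Hodge-structure, and more importantly the weight filtration on $H^1$ of a smooth variety is ``topological'' in the sense that $W_1 H^1(Z;\Q) = \im(H^1(\Zbar;\Q))$ for any smooth compactification $\Zbar$, and $\gr^W_1$ is a pure polarizable HS. (3) Now factor: $h^\ast f^\ast$. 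A class $\alpha \in H^1(X;\Q)$ with $f^\ast h^\ast \alpha = 0$ restricts to zero on $Z$. Since $f: Z \to f(Z) \subseteq Y$ is dominant onto a subvariety and $f^\ast$ on $H^1$ is injective for a dominant morphism of smooth varieties when we restrict to the part coming from a compactification — more precisely, $f^\ast : H^1(f(Z)) \to H^1(Z)$ is injective on $W_1$ by the strictness of $f^\ast$ with respect to $W_\bullet$ and the fact that a dominant morphism is surjective on $H_1$ up to finite index (Proposition~\ref{prop:normalization}, applied to $f$ itself!). Hence $h^\ast\alpha$ already restricts to zero on $f(Z)$. (4) Finally relate $f(Z)\subseteq Y$ back to $X$: this is where one uses that $h^\ast\alpha$ lies in $\im(H^1(X))$, so it is the restriction of a global class, and one invokes Deligne's strictness: the map $H^1(X) \to H^1(Y)$ has image a sub-MHS, the map to $H^1(f(Z))$ is a morphism of MHS, and a class in the image of $H^1(X)$ killed on $f(Z)$ must already be ``small'' — here one plays off $W_1$ (pure, semisimple, so a sub-HS that dies on a subvariety dies because of a Lefschetz/Gysin argument) against the fact that $H^1(X)$'s image contributes to $W_1$.

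\emph{Main obstacle.} The delicate point is step (4): controlling the kernel of $H^1(X) \to H^1(f(Z))$ where $f(Z)$ is a possibly very singular, possibly low-dimensional, locally closed subvariety of $X$. One cannot use Lefschetz here since $f(Z)$ need not be a hyperplane section of anything. The right tool is Deligne's strictness of the weight filtration for the Gysin/restriction maps in the exact sequence relating $H^\bullet(X)$, $H^\bullet(X - f(Z))$ and local cohomology along $f(Z)$: strictness forces $\ker(W_1 H^1(X) \to W_1 H^1(f(Z)))$ to be controlled by $H^1$ of a smooth compactification of $X$ restricted to (a resolution of) $f(Z)$, and then one applies the hard Lefschetz / semisimplicity package on the compactification, where $f(Z)$'s closure is genuinely a subvariety one can resolve. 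Equivalently — and this is likely the cleanest packaging — one uses that for smooth $X$ the natural map $H_1(f(Z);\Z) \to H_1(X;\Z)$ has the property that its image, intersected with the ``abelian-variety part'' $W_{-1}H_1$ (dually $W_1 H^1$), is a subgroup whose saturation is cut out Hodge-theoretically, and the dominant map $Z \to f(Z)$ together with $f(Z)\hookrightarrow Y \hookrightarrow X$ is enough to conclude that $\im H_1(Z) \to H_1(X)$ and $\im H_1(Y)\to H_1(X)$ agree up to finite index on this saturated part; the remaining (weight $0$) part of $H_1$ is handled directly since it is a quotient of $H_1$ of the normalization of a compactification, where dominant morphisms are surjective. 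I expect reconciling these two weight pieces, and in particular verifying that no ``mixing'' between weights can create a larger image for $Y$ than for $Z$, to absorb most of the work.
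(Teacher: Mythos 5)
Your framework is the right one (pass to $\Q$-coefficients, dualize to cohomology, use the weight filtration and Deligne's strictness), and that is indeed the paper's approach in outline. But the proposal is missing the single lemma that carries all the weight, and the steps you substitute for it do not work. The key fact (Proposition~\ref{prop:technical} in the paper) is: for a dominant morphism $f:Z\to Y$ from a \emph{smooth} variety, $\ker\{f^\ast:H^1(Y;\Q)\to H^1(Z;\Q)\}=W_0H^1(Y)$. One inclusion is formal ($W_0H^1(Y)$ maps into $W_0H^1(Z)=0$ since $Z$ is smooth); the converse is the real content, and the paper proves it by first arranging $Z\to Y$ to be proper and surjective, completing it to a proper hypercovering $M_\dot\to Y$ with $M_0=Z$, and reading off $H^1(Y)/W_0=\ker\{H^1(M_0)\to H^1(M_1)\}$ from the descent spectral sequence of mixed Hodge structures. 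Your replacement for this --- ``$H^1(f(Z))\to H^1(Z)$ is injective on $W_1$ by strictness'' --- does not follow from strictness alone, and the appeal to Proposition~\ref{prop:normalization} is both off-target and dangerously close to circular: that proposition only controls the image of $\pi_1$ of the \emph{normalization}, and the failure of the normalization to surject onto $H_1$ is exactly the phenomenon ($W_0H^1\neq 0$ for singular $Y$) that the proposition you are proving has to overcome. Note also that since $f$ is dominant, $f(Z)$ is Zariski dense in $Y$, so your ``main obstacle'' about $f(Z)$ being a low-dimensional singular subvariety, and the hard Lefschetz/Gysin machinery of step (4), are red herrings.

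Once the key lemma is available, the rest is a two-line strictness computation that you gesture at but never nail down: since $X$ is smooth, $W_0H^1(X)=0$, so strictness of $h^\ast$ gives
$$
\im h^\ast\cap\ker f^\ast=\im h^\ast\cap W_0H^1(Y)=h^\ast\big(W_0H^1(X)\big)=0,
$$
whence $\ker\{h^\ast\}=\ker\{f^\ast h^\ast\}$ on $H^1(X;\Q)$, and the integral finite-index statement follows from finite generation of $H_1$. Your closing sentence (``the remaining weight $0$ part \dots is handled directly since it is a quotient of $H_1$ of the normalization \dots'') misidentifies where $W_0H^1(Y)$ comes from and where it goes: it is the combinatorial part of $H^1(Y)$ coming from the singularities/gluing, it is killed by pullback to any smooth variety, and the decisive point is that it meets $\im h^\ast$ trivially because $X$ is smooth --- not because of any surjectivity of normalizations.
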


Since this result does not appear explicitly in the literature, we sketch a
proof in the next section.

It would be very useful to know the extent to which this strictness result
extends to (topological, profinite, proalgebraic, \dots) fundamental groups.
Choose base points $x_o$, $y_o$ and $z_o$ such that $f$ and $h$ are basepoint
preserving. The most optimistic statement that one might hope to be true is that
the image of $\pi_1(Z,z_o) \to \pi_1(X,x_o)$ has finite index in the image of
$\pi_1(Y,y_o) \to \pi_1(X,x_o)$. If this were true, then
Theorem~\ref{thm:strictness} could be strengthened and its proof simplified.
Unfortunately, it is not true, as we will show by example below. However, weaker
statements that would follow from this optimistic statement, such as
Theorem~\ref{thm:strictness}, do hold. We shall call them {\em non-abelian
strictness theorems}. The only non-abelian strictness theorems of which I am
aware are due to Nori \cite[Thm.~WLT]{nori}, Lasell and Ramachandran
\cite[Thm.~1.1]{lasell-ramachandran}, and Napier and Ramachandran
\cite[Cor.~0.2]{napier-ramachandran}:

\begin{theorem}[Nori (1983), Napier-Ramachandran (1998)]
Suppose that $X$ and $Y$ are connected, smooth projective varieties of positive
dimension. If $Y \to X$ is a holomorphic immersion with ample normal bundle,
then the image of $\pi_1(Y,y_o)$ in $\pi_1(X,x_o)$ has finite index.
\end{theorem}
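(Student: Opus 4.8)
The plan is to reduce the statement to a compactness assertion about one covering space of $X$, and then to play the rigidity forced by an ample normal bundle against the abundance of holomorphic functions on an infinite covering of a projective variety. First I would put $H=\im(\pi_1(Y,y_o)\to\pi_1(X,x_o))$ and let $p\colon\widetilde X\to X$ be the connected unramified covering corresponding to $H$. Since $X$ is compact, $p$ has finite degree precisely when $[\pi_1(X,x_o):H]<\infty$, so it is enough to show that $\widetilde X$ is compact. As $H$ is by construction the image of $\pi_1(Y,y_o)$, the immersion $Y\to X$ lifts to a holomorphic immersion $\iota\colon Y\to\widetilde X$, and because $p$ is a local biholomorphism the normal bundle of $\iota$ is canonically the pullback to $Y$ of the normal bundle of $Y\to X$, hence ample. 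Thus the theorem follows once one knows: a connected covering $W$ of a smooth projective variety that admits a holomorphic immersion from a connected smooth projective variety $Y$ of positive dimension with ample normal bundle is compact.

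Next I would prove the rigidity statement that every holomorphic function on a connected neighbourhood $U$ of $\iota(Y)$ in $W$ is constant. Indeed, if $g$ is such a function then $g\circ\iota$ is a holomorphic function on the compact connected variety $Y$, hence a constant $c$. Working locally on $Y$, where $\iota$ is an embedding, the successive normal jets of $g-c$ along $\iota$ assemble into global sections of $S^kN^\vee$ for $k\ge1$, where $N$ is the normal bundle of $\iota$. Now $S^kN$ is ample (a symmetric power of an ample bundle is ample), and an ample vector bundle on a projective variety of positive dimension carries no nonzero homomorphism to $\mathcal{O}$: restricting such a homomorphism to a suitable curve, its image would be a line bundle of degree $\le 0$ that is a quotient of an ample bundle, hence ample, which is impossible. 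So every such section vanishes, $g-c$ vanishes to infinite order along $\iota$, hence to infinite order at $\iota(y)$ for each $y\in Y$, and therefore $g\equiv c$ on $U$. Since $W$ is connected, this already forces $W$ to carry no nonconstant holomorphic function whatsoever, and — as every open Riemann surface is Stein — no proper holomorphic surjection onto a noncompact Riemann surface either.

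Finally I would bring in the analytic structure theory to rule out noncompactness. Equip $X$ with a Kähler metric (for instance the restriction of a Fubini--Study metric) and $W$ with its pullback, so that $W$ is a complete Kähler manifold covering a compact one. If $W$ were noncompact, the structure theorems for such coverings — Gromov's Kähler hyperbolicity techniques together with the refinements of Napier and Ramachandran — would yield a nonconstant holomorphic function on $W$, or a proper holomorphic surjection of $W$ onto a noncompact Riemann surface, hence (using the Stein-ness of open Riemann surfaces in the second case) a nonconstant holomorphic function on $W$ either way. This contradicts the previous paragraph, so $W$ is compact; hence $p$ has finite degree and $\im(\pi_1(Y,y_o)\to\pi_1(X,x_o))$ has finite index. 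I expect this last step to be the genuine obstacle: the reduction and the jet computation are formal, but producing a nonconstant holomorphic function on an infinite covering of a projective variety is delicate — it is false for general noncompact complete Kähler manifolds — and constitutes the hard content of the cited work of Napier and Ramachandran (and, by a more algebraic route through Hartshorne's theory of ample subvarieties and a covering-space bookkeeping argument, of Nori's original 1983 proof).
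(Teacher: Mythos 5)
The paper does not prove this theorem --- it is quoted from Nori and from Napier--Ramachandran as known background --- so your proposal has to be measured against the published proofs. Your first two steps are correct and are indeed the standard opening of both: passing to the covering $p:\widetilde{X}\to X$ classified by $H=\im\{\pi_1(Y,y_o)\to\pi_1(X,x_o)\}$, lifting the immersion, and then showing by the inductive jet computation (i.e.\ the vanishing of $H^0(Y,S^kN^\vee)$ for $k\ge 1$, which is Hartshorne's formal-functions argument) that every holomorphic function on a connected neighbourhood of the lifted $Y$ is constant. The genuine gap is in your final step. The dichotomy you invoke --- that any noncompact complete K\"ahler manifold covering a projective manifold carries a nonconstant holomorphic function or maps properly onto a noncompact Riemann surface --- is not a theorem; it is false. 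Cousin's toroidal groups give explicit counterexamples: if $A=\C^2/\Lambda$ is a simple abelian surface and $\Lambda'\subset\Lambda$ is any rank $3$ subgroup, then $W=\C^2/\Lambda'\to A$ is an infinite, noncompact covering with only constant holomorphic functions (a nonconstant one would force a complex line of $\C^2$ meeting $\Lambda'$ in a rank $2$ subgroup, hence an elliptic curve in $A$), and $W$ admits no proper holomorphic surjection onto a noncompact, hence Stein, Riemann surface either. So noncompactness of the covering alone produces nothing, and your intended contradiction evaporates.

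The published proofs must therefore use $Y$ a second time, beyond the rigidity statement. In Napier--Ramachandran, the surjectivity of $\pi_1(Y,y_o)\to\pi_1(\widetilde{X})$ --- which holds by your choice of covering but which your argument never exploits --- is used to control the (filtered) ends of $\widetilde{X}$ relative to the compact set $\iota(Y)$; their $L^2$ $\overline{\partial}$ existence theorems apply only under hypotheses on the number and type of those ends, and the low-end cases require separate arguments with pluriharmonic functions and Green's functions. Verifying these end conditions, not the jet computation, is the bulk of their work. Nori's 1983 proof avoids the analysis altogether, comparing fundamental groups of formal and actual neighbourhoods via Hartshorne's theory of ample subvarieties together with a covering-space counting argument. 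Either way, the step you delegate to a general ``structure theorem for coverings'' needs the immersed subvariety with ample normal bundle as essential input, and as stated your step 4 would fail.
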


\begin{theorem}[Lasell-Ramachandran (1996)]
If $X$, $Y$ and $Z$ are all irreducible and proper, then for each positive
integer $N$ there is a finite quotient $\Delta_N$ of $\pi_1(Y,y_o)$ such that
for all fields $F$ and all reductive representations $\rho :\pi_1(X,x_o)\to
\GL_N(F)$ such that
$$
\xymatrix{
\pi_1(Z,z_o) \ar[r]^{f_\ast} & \pi_1(Y,y_o) \ar[r]^{h_\ast} &
\pi_1(X,x_o) \ar[r]^\rho & \GL_N(F)
}
$$
is trivial, the homomorphism $\rho\circ h_\ast : \pi_1(Y,y_o) \to \GL_N(F)$
factors through $\Delta_N$:
$$
\xymatrix{
\pi_1(Z,z_o)\ar[d]_{f_\ast} \ar[drr]^{\text{trivial}} \cr
\pi_1(Y,y_o) \ar@{->>}[d] \ar[r]^{h_\ast} & \pi_1(X,x_o) \ar[r]^\rho & \GL_N(F)
\cr
\Delta_N \ar@{..>}[urr]
}
$$
\end{theorem}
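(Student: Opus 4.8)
The plan is to reduce the assertion to two sub-claims. First: for \emph{each} field $F$ and \emph{each} reductive $\rho:\pi_1(X,x_o)\to\GL_N(F)$ with $\rho\circ h_\ast\circ f_\ast$ trivial, the image $\rho(h_\ast\pi_1(Y,y_o))$ is a \emph{finite} group. Second: there is a single finite-index normal subgroup $\Lambda\trianglelefteq\pi_1(Y,y_o)$, depending only on $N$ and on the maps, such that $\rho\circ h_\ast$ kills $\Lambda$ for all such $\rho$; then $\Delta_N:=\pi_1(Y,y_o)/\Lambda$ does the job.

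For the first sub-claim I would reduce to the case of a local field. Since $X$ is proper, $\pi_1(X,x_o)$ is finitely presented, so $\rho$ takes values in $\GL_N(F_0)$ for a finitely generated subfield $F_0\subseteq F$. If the finitely generated linear group $\rho(h_\ast\pi_1(Y,y_o))$ were infinite, it would be unbounded under the embedding of $F_0$ into its completion at some place of $F_0$ over the prime field; replacing $F$ by that completion $K$, we may assume $\rho(h_\ast\pi_1(Y,y_o))$ is an \emph{unbounded} subgroup of $\GL_N(K)$, with $K$ a local field. Because $\rho$ is reductive, Corlette's theorem (when $K$ is archimedean) or the Gromov--Schoen theorem (when $K$ is non-archimedean) produces a $\rho$-equivariant harmonic map $u$ from the universal cover $\widetilde X$ into a complete nonpositively curved space --- the symmetric space of the relevant real reductive group, respectively its Bruhat--Tits building; since $X$ is projective, $u$ is pluriharmonic. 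Restricting $u$ to the preimage of the closed subvariety $h(Y)$ in $\widetilde X$ gives a pluriharmonic map $w$, equivariant under the image of $\pi_1(Y,y_o)$ in $\pi_1(X,x_o)$. Composing $w$ with the lift $\widetilde Z\to\widetilde X$ of the surjection $Z\to h(Y)$ gives a pluriharmonic map equivariant under the trivial representation $\rho\circ h_\ast\circ f_\ast$; it therefore descends to a pluriharmonic map from the compact Kähler manifold $Z$ into a nonpositively curved space, and is hence constant. Thus $w$ is constant on the image of $\widetilde Z$ in $\widetilde X$. The energy density $e(w)$ of $w$ is a nonnegative function invariant under the relevant deck transformations (which act isometrically on both source and target), so it descends to $h(Y)$; since the image of $\widetilde Z$ in $\widetilde X$ projects onto $h(Y)$, that energy density vanishes identically on $h(Y)$, so $w$ is constant. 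Therefore $\rho(h_\ast\pi_1(Y,y_o))$ fixes a point of the target and is bounded, contradicting our reduction. Hence $\rho(h_\ast\pi_1(Y,y_o))$ is finite.

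For the second sub-claim I would work inside the $\GL_N$-character scheme $\Hom(\pi_1(X,x_o),\GL_N)/\!/\GL_N$. Because $\pi_1(X,x_o)$ is finitely presented this scheme is of finite type over $\Z$, and the locus $C$ on which $\rho\circ h_\ast\circ f_\ast=1$ is closed (it is the image of a closed $\GL_N$-invariant subscheme of the Hom-scheme). Hence $C$ has finitely many irreducible components $C_1,\dots,C_r$. The generic point of $C_i$ carries, over a finite extension of its residue field, a reductive representation $\rho_i$ with $\rho_i\circ h_\ast\circ f_\ast=1$, so by the first sub-claim the kernel $\Lambda_i$ of $\rho_i\circ h_\ast$ is a finite-index normal subgroup of $\pi_1(Y,y_o)$. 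For a fixed $\gamma\in\Lambda_i$ the condition ``$\rho(h_\ast\gamma)=1$'' is closed on the character scheme and holds at the generic point of $C_i$, hence on all of $C_i$; therefore every reductive $\rho$ with $\rho\circ h_\ast\circ f_\ast=1$ whose class lies in $C_i$ has $\rho\circ h_\ast$ trivial on $\Lambda_i$. Since any such $\rho$ over any field determines a point of $C$, which lies in some $C_i$, the finite-index subgroup $\Lambda:=\Lambda_1\cap\dots\cap\Lambda_r$ and the quotient $\Delta_N:=\pi_1(Y,y_o)/\Lambda$ are as required.

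I expect the first sub-claim, and within it the harmonic-map input, to be the main obstacle. One needs equivariant pluriharmonic maps attached to reductive representations over all completions, archimedean and non-archimedean --- in the non-archimedean case this is the Gromov--Schoen theory of harmonic maps into Euclidean buildings --- together with the standard but delicate facts that such a map restricts to a pluriharmonic map on the smooth locus of a subvariety, has deck-invariant energy density, and is constant on a compact Kähler source; the reduction ``infinite finitely generated linear group $\Rightarrow$ unbounded at some place'' and the bookkeeping needed to pass from possibly singular subvarieties to their smooth loci also need care. By contrast, the uniformity statement is a fairly formal consequence of the first sub-claim together with the finite-type-ness of character schemes over $\Z$.
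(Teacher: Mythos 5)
The paper does not prove this theorem; it is quoted, with attribution, as a known result of Lasell and Ramachandran, so there is no internal proof to compare against. Your outline is, in substance, the strategy of the original published proof: the two-step decomposition into (a) pointwise finiteness of $\rho(h_\ast\pi_1(Y,y_o))$ via equivariant pluriharmonic maps into symmetric spaces (Corlette) and Bruhat--Tits buildings (Gromov--Schoen), and (b) uniformity over all $F$ and $\rho$ via the finite-type representation/character scheme over $\Z$, is exactly how this kind of statement is established in the harmonic-maps literature. A few points deserve more care than your sketch gives them, though none is fatal. First, the reduction ``infinite finitely generated linear group $\Rightarrow$ unbounded at some place of $F_0$'' uses semisimplicity: boundedness at every place forces the traces to lie in a finite set, and one then needs a Burnside/Jordan-type argument (valid for semisimple representations) to conclude finiteness. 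Second, the representation living over the generic point of a component $C_i$ need not itself be semisimple; you must replace it by its semisimplification, which has the same image in the character scheme, still annihilates $f_\ast\pi_1(Z,z_o)$, and is the object to which sub-claim one applies. Third, the condition ``$\rho(h_\ast\gamma)=1$'' is closed on the Hom-scheme but descends to the character scheme only through trace identities such as $\mathrm{tr}\,\rho(h_\ast\gamma\cdot\delta)=\mathrm{tr}\,\rho(\delta)$ for all $\delta$, which characterize the condition only at semisimple points; since every point of $C_i$ is represented by a semisimple representation after semisimplification, the argument goes through, but it should be said. Finally, the constancy argument for $w$ should be run on the smooth locus of $h(Y)$ (where the energy density is defined and vanishes on a dense subset, hence identically), with constancy on all of the relevant preimage component following by continuity and irreducibility of $Y$.
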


The following example shows that several of the most optimistic non-abelian
strictness statements given above are false.

\subsection{The abelian strictness theorem does not extend naively to $\pi_1$}
\label{sec:ab_strictness}

We give a general method of constructing counterexamples to the most general
forms of the non-abelian strictness assertion, and then use it to give an
explicit counter example. We will construct varieties
$$
\xymatrix{
Z \ar[r]^f & Y \ar[r]^h & X
}
$$
where $X$ and $Z$ are smooth and $f$ is dominant, and a homomorphism $\rho :
\pi_1(X,x_o) \to G(\Q)$ with the property that the dimension of the Zariski
closure of the image of $\pi_1(Z,z_o)$ in $G$ has dimension strictly smaller
than the dimension of the Zariski closure of the image of $\pi_1(Y,y_o)$ in $G$.
This provides counter examples to the discrete, profinite and algebraic versions
of the strictness assertion.

Suppose that $G$ is a semi-simple $\Q$-group of adjoint type such that the
symmetric space $D$ of $G(\R)$ is hermitian. Then $D$ has a complex structure
with the property that $G$ acts on $D$ by biholomorphisms. Suppose that $\G$ is
an arithmetic subgroup of $G(\Q)$. Denote by $D^o$ the open subset of $D$ on
which $\G$ acts fixed point freely. Suppose that $D^o$ is non-empty. Then $\G\bs
D$ is a quasi-projective variety, and $X:= \G\bs D^o$ is a smooth subvariety of
it.

Suppose that $H$ is a semi-simple $\Q$-subgroup of $G$, $H\neq G$, whose
associated symmetric space is hermitian. Suppose that $d_o \in D$ is a point
whose $H(\R)$-orbit $D_H$ satisfies:
\begin{enumerate}

\item the image of $D_H$ in $\G\bs X$ is a subvariety,

\item $d_o \in D_H^o := D_H\cap D^o$.

\end{enumerate}
Set $Z=(\G\cap H(\Q))\bs D_H^o$. Then $Z$ is non-empty and smooth, and the map
$Z \to X$ induced by $D_H\to D$ is finite. Its image $Y$ in $X$ is closed.
Denote the images of $d_o$ in $Z$, $Y$ and $X$ by $z_o$, $y_o$ and $x_o$, 
respectively. (Cf.\ Figure~\ref{fig:setup}.)
\begin{figure}[!ht]
\epsfig{file=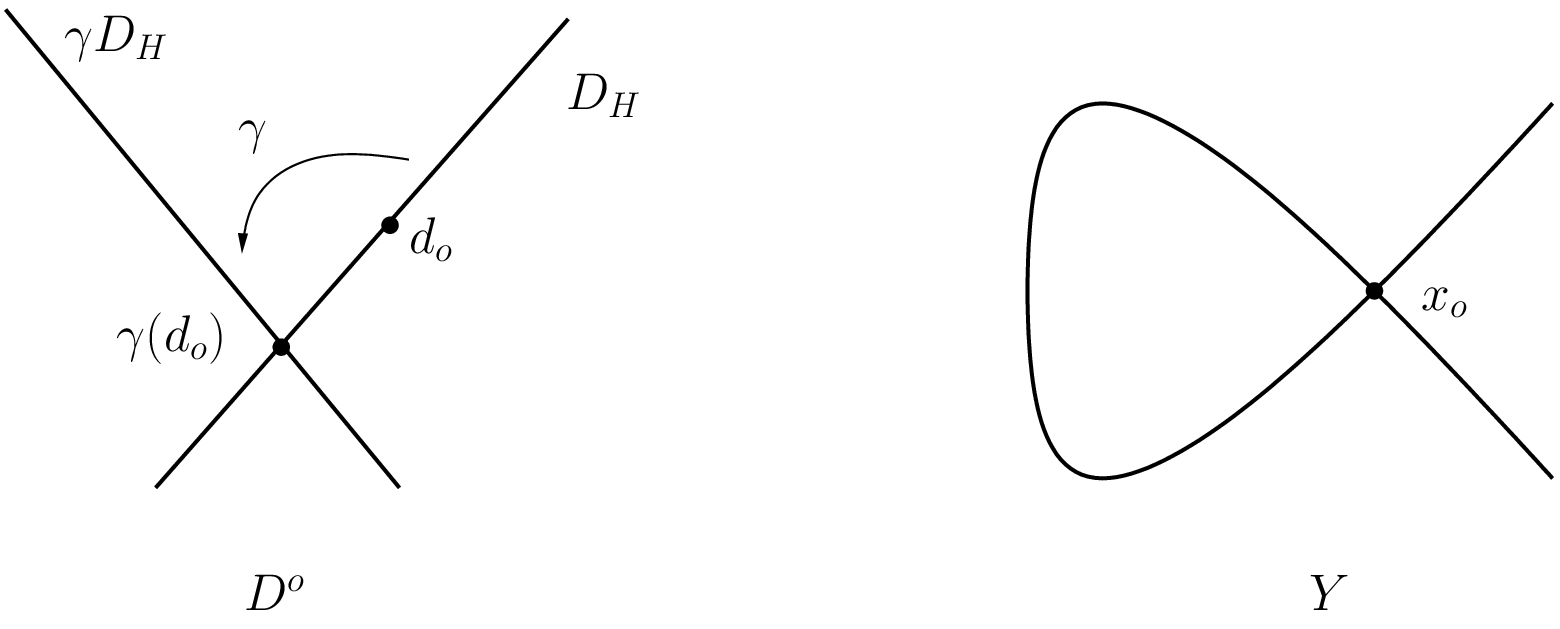, width=3.5in}
\caption{$Y$ and part of its inverse image in $D^o$}
\label{fig:setup}
\end{figure}

Note that there is an exact sequence
$$
1 \to \pi_1(D^o,d_o) \to \pi_1(X,x_o) \to \G \to 1.
$$
There is therefore a natural homomorphism $\pi_1(X,x_o) \to G(\Q)$.

\begin{proposition}
\label{prop:closure}
If there exists $\gamma \in \G$ such that 
\begin{enumerate}

\item $\gamma(d_o) \neq d_o$,

\item $\gamma(d_o) \in D_H$,

\item no positive power $\gamma^N$ of $\gamma$ lies in $H(\Q)$,

\end{enumerate}
then the dimension of the Zariski closure of the image of $\pi_1(Z,z_o)$ in $G$
is larger than the dimension of the Zariski closure of the image of
$\pi_1(Y,y_o)$ in $G$. Consequently, the image of $\pi_1(Z,z_o) \to
\pi_1(X,x_o)$ has infinite index in the image of  $\pi_1(Y,y_o) \to
\pi_1(X,x_o)$.
\end{proposition}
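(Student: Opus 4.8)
The plan is to exhibit an explicit element of the image of $\pi_1(Y,y_o)$ in $G(\Q)$ that is not contained in the Zariski closure of the image of $\pi_1(Z,z_o)$ — not even up to powers — and then to convert a single dimension inequality into both conclusions. Write $\Theta_Z$ and $\Theta_Y$ for the images of $\pi_1(Z,z_o)$ and $\pi_1(Y,y_o)$ in $G(\Q)$ under the homomorphism $\pi_1(X,x_o)\to G(\Q)$, and $\overline{\Theta_Z}$, $\overline{\Theta_Y}$ for their Zariski closures in $G$. Since $Z\to X$ factors through $Y$ we have $\Theta_Z\subseteq\Theta_Y$, hence $\overline{\Theta_Z}\subseteq\overline{\Theta_Y}$; what I would prove is the strict inequality $\dim\overline{\Theta_Z}<\dim\overline{\Theta_Y}$ (the inequality asserted, read in the direction forced by $Z\to Y$, as in the opening of this subsection), after which the ``infinite index'' assertion is automatic.

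First I would pin down $\Theta_Z$. Any $\eta\in\G\cap H(\Q)$ fixing a point of $D_H^o\subseteq D^o$ is trivial, so $\G\cap H(\Q)$ acts freely and properly discontinuously on $D_H^o$; the covering $D_H^o\to Z$ then gives an exact sequence $1\to\pi_1(D_H^o,d_o)\to\pi_1(Z,z_o)\to\G\cap H(\Q)\to 1$, compatible with $1\to\pi_1(D^o,d_o)\to\pi_1(X,x_o)\to\G\to 1$. Hence $\Theta_Z=\G\cap H(\Q)$, and in particular $\overline{\Theta_Z}\subseteq H$, since $H$ is a semi-simple (so connected) $\Q$-subgroup, hence Zariski closed in $G$, with $H\cap G(\Q)=H(\Q)$.

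The geometric heart is the construction of a distinguished loop in $Y$. By (1), $\gamma\neq 1$ (otherwise $\gamma$ would fix $d_o\in D^o$), so $\dim_\C D\ge 1$ and $D_H^o$, being $D_H$ with a countable union of proper closed analytic subsets removed, is path connected. By (2), $\gamma(d_o)\in D_H$; moreover $\gamma(d_o)\in D^o$, since if $\eta\in\G$ fixed $\gamma(d_o)$ then $\gamma^{-1}\eta\gamma$ would fix $d_o$; thus $\gamma(d_o)\in D_H^o$. Choose a path $p$ in $D_H^o$ from $d_o$ to $\gamma(d_o)$. Its image $\bar p$ in $X$ is a loop at $x_o$ lying in $Y$ (the image of $D_H^o$ in $X$), so it determines a class in $\pi_1(Y,y_o)$. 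Under the monodromy map $\pi_1(X,x_o)\to\G$ — which sends a loop to the unique $\eta\in\G$ for which a lift starting at $d_o$ ends at $\eta(d_o)$ — the class of $\bar p$ maps to $\gamma$. Hence $\gamma\in\Theta_Y\subseteq\overline{\Theta_Y}$.

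Now the dimension count. Suppose $\dim\overline{\Theta_Y}=\dim\overline{\Theta_Z}$. Then $(\overline{\Theta_Z})^\circ$, being a connected subgroup of $\overline{\Theta_Y}$ containing the identity, lies in $(\overline{\Theta_Y})^\circ$; as it is irreducible of the same dimension, $(\overline{\Theta_Z})^\circ=(\overline{\Theta_Y})^\circ$, so $(\overline{\Theta_Y})^\circ\subseteq H$. Since $\gamma\in\overline{\Theta_Y}$ has finite order modulo the identity component, $\gamma^N\in(\overline{\Theta_Y})^\circ\subseteq H$ for some $N\ge 1$, whence $\gamma^N\in H\cap G(\Q)=H(\Q)$, contradicting (3). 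Therefore $\dim\overline{\Theta_Z}<\dim\overline{\Theta_Y}$. Finally, if the image of $\pi_1(Z,z_o)$ in $\pi_1(X,x_o)$ had finite index in the image of $\pi_1(Y,y_o)$, then $\Theta_Z$ would have finite index in $\Theta_Y$, hence $\overline{\Theta_Z}$ finite index in $\overline{\Theta_Y}$, forcing equal dimensions — a contradiction. The delicate point is the third step: it is precisely the mechanism by which a self-intersection of $Y$ over $x_o$ injects the ``extra'' monodromy $\gamma$ into $\pi_1(Y,y_o)$, and it is hypothesis (3) — rather than Borel density or a no-compact-factor hypothesis on $H$ — that disposes of the equidimensional case.
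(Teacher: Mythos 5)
Your proof is correct and follows essentially the same route as the paper's: identify the image of $\pi_1(Z,z_o)$ with $\G\cap H(\Q)$, lift the self-intersection to a path in $D_H^o$ from $d_o$ to $\gamma(d_o)$ whose image is a loop in $Y$ mapping to $\gamma$, and use hypothesis (3) to force the strict dimension jump (you also correctly read the printed inequality in the direction forced by $Z\to Y$). The only cosmetic difference is the last step: the paper invokes Borel density to get $\overline{\Theta_Z}=H$ and then counts the infinitely many distinct cosets $\gamma^N H$ inside $\overline{\Theta_Y}$, whereas you use only $\overline{\Theta_Z}\subseteq H$ together with finiteness of the component group of $\overline{\Theta_Y}$ --- a slightly more economical version of the same argument.
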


\begin{proof}
The image of $\pi_1(Z,z_o)$ in $G(\Q)$ is the arithmetic group $\G_H:=\G\cap
H(\Q)$. Since $\G_H$ is an arithmetic subgroup of the semi-simple group $H$, its
Zariski closure in $G$ is $H$. Since $\gamma^N \notin H(\Q)$ for all $N>0$,
$\gamma(D_H) \neq D_H$. Choose a path $\mu$ in $D_H^o$ from $d_o$ to
$\gamma(d_o)$. The image of $\mu$ in $Y$ is a loop whose image under
$\pi_1(Y,y_o) \to G(\Q)$ is $\gamma$. Since no positive power of $\gamma$ lies
in $H(\Q)$, the cosets $\{\gamma^N H(\Q):N\ge 1\}$ of $H$ in $G$ are distinct.
It follows that the Zariski closure of the image of $\pi_1(Y,y_o) \to G(\Q)$ has
dimension strictly larger than the dimension of $H(\Q)$.
\end{proof}

\begin{example}
Let $G$ be the $\Q$-group $\PSL_2\times\PSL_2$, $D$ be the product $\h\times\h$
of two copies of the upper half plane, and let $\G =
\PSL_2(\Z)\times\PSL_2(\Z)$. Let
$$
g_1 = \begin{pmatrix} 0 & {-1/2} \cr 1 & 1\end{pmatrix} \text{ and }
g_2 = \begin{pmatrix} 1 & 0 \cr 0 & 3 \end{pmatrix}.
$$
These are elements of $\GL_2(\Q)$ with positive determinant and therefore act on
$\h$. Set $d_o = (g_1(i),g_2(i))$. Since $-1/g_1(i) = 2 + 2i$ and $-1/g_2(i) =
3i$, the $\SL_2(\Z)$ orbits of $g_1(i)$ and $g_2(i)$ both intersect $\Im\tau >
1$, which implies that their isotropy groups in $\PSL_2(\Z)$ are trivial. Thus
$d_o \in D^o$.

Set $g=(g_1,g_2)$. Denote the diagonal copy of $\PSL_2$ in $\PSL_2\times\PSL_2$
by $\Delta_G$. Let $H = g \Delta_G g^{-1}$ and $D_H$ be the $H$-orbit of $d_o$.
Define $\gamma = (\gamma_1,\gamma_2)$, where
$$
\gamma_1 = \begin{pmatrix} 1 & 1 \cr 0 & 1\end{pmatrix} \text{ and }
\gamma_2 = \begin{pmatrix} 2 & -1 \cr -3 & 2 \end{pmatrix}.
$$
Since $\trace(\gamma_1) = 2$ and $\trace(\gamma_2) = 4$, both act fixed point
freely on $\h$. In particular, $\gamma(d_o) \neq d_o$. If $\gamma^N$ is
conjugate to an element of $H$, then there exists $h\in \PSL_2(\Q)$ such that
$\gamma_1^N = g_1 h^N g_1^{-1}$ and $\gamma_2^N = g_2 h^N g_2^{-1}$. In
particular, $\gamma_1^N$ is conjugate to $\gamma_2^N$ in $\PSL_2(\Q)$. Since
$\gamma_1$ has eigenvalue $1$, and $\gamma_2$ has eigenvalues $2\pm \sqrt{3}$,
$\gamma_1^N$ is conjugate to $\gamma_2^N$ if and only if $N=0$.
\end{example}

\subsection{Proof of the Abelian Strictness Assertion}

All cohomology in this section will be with rational coefficients. Recall
from \cite{deligne:hodge} that
the rational cohomology of a (complex algebraic) variety $T$ has a natural
weight filtration
$$
0 = W_{-1}H^j(T) \subseteq \cdots \subseteq W_r H^j(T) \subseteq
W_{r+1} H^j(T) \subseteq \cdots \subseteq W_{2j}H^j(T) = H^j(T)
$$
Its $r$th graded quotient $W_rH^j(T)/W_{r-1}$ will be denoted by $\Gr^W_r
H^j(T)$. It has the property that $W_{j-1}H^j(T)=0$ when $T$ is smooth, and
$H^j(T)=W_jH^j(T)$ when $T$ is proper.

If $f : S \to T$ is a morphism of complex algebraic varieties, then the induced
morphism
$$
f^\ast : H^\dot(T) \to H^\dot(S)
$$
is {\em strict} with respect to the weight filtration $W_\dot$. That is,
$$
\im\{f^\ast : H^j(T) \to H^j(S) \} \cap W_rH^j(T)
= \im\{f^\ast : W_r H^j(T) \to H^j(S)\}.
$$

\begin{proposition}
\label{prop:technical}
If $f : Z \to Y$ is a dominant morphism from a smooth variety, then
$$
\xymatrix{
0 \ar[r] & W_0 H^1(Y) \ar[r] & H^1(Y) \ar[r]^{f^\ast} & H^1(Z)
}
$$
is exact.
\end{proposition}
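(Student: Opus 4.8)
The plan is to prove the two inclusions of the claimed exactness separately. The inclusion $W_0H^1(Y;\Q)\subseteq\ker f^\ast$ is formal: $f^\ast$ is a morphism of mixed Hodge structures, and since $Z$ is smooth we have $W_{j-1}H^j(Z;\Q)=0$ for all $j$, in particular $W_0H^1(Z;\Q)=0$; hence $f^\ast$ annihilates $W_0H^1(Y;\Q)$. Since $W_0H^1(Y;\Q)\hookrightarrow H^1(Y;\Q)$ is trivially injective, the real content is the reverse inclusion $\ker f^\ast\subseteq W_0H^1(Y;\Q)$, equivalently, the injectivity of $H^1(Y;\Q)/W_0H^1(Y;\Q)\to H^1(Z;\Q)$. (After replacing $Z$ by a connected component dominating $Y$ we may, and do, assume $Z$ and $Y$ irreducible.)

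I would first reduce to the case $Y$ smooth. Choose a resolution of singularities $\pi:\widetilde{Y}\to Y$, let $Z_1$ be the unique component of $Z\times_Y\widetilde{Y}$ dominating $Z$ (it exists because $\pi$ is an isomorphism over a dense open of $Y$ and $f$ is dominant), and let $q:\widetilde{Z}\to Z$ be a resolution of $Z_1$; then $q$ is proper and birational, $\widetilde{Z}$ is smooth, the induced morphism $\widetilde{f}:\widetilde{Z}\to\widetilde{Y}$ is dominant, and $\pi\circ\widetilde{f}=f\circ q$. I would then use two facts. First, $\ker\big(\pi^\ast:H^1(Y;\Q)\to H^1(\widetilde{Y};\Q)\big)=W_0H^1(Y;\Q)$: this is built into Deligne's construction of the weight filtration, via cohomological descent along the abstract blow-up square attached to $\pi$, the kernel being the image of $H^0$ of the exceptional locus, which is pure of weight $0$. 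Second, $q^\ast:H^1(Z;\Q)\to H^1(\widetilde{Z};\Q)$ is injective: choosing a dense open $U_0\subseteq Z$ over which $q$ is an isomorphism, $Z$ is smooth, hence unibranch (Lemma~\ref{lem:unibranch}), so $\pi_1(U_0,x_o)\to\pi_1(Z,x_o)$ is surjective (Lemma~\ref{lem:surjective}), whence $H^1(Z;\Q)\to H^1(U_0;\Q)$ is injective and factors as $H^1(Z;\Q)\xrightarrow{q^\ast}H^1(\widetilde{Z};\Q)\to H^1(U_0;\Q)$. From $q^\ast f^\ast=\widetilde{f}^\ast\pi^\ast$ one gets $\ker f^\ast=\ker(\widetilde{f}^\ast\pi^\ast)$, so if $\widetilde{f}^\ast$ is injective then $\ker f^\ast=\ker\pi^\ast=W_0H^1(Y;\Q)$. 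Thus it suffices to show: for a dominant morphism $g:Z'\to Y'$ of smooth varieties, $g^\ast:H^1(Y';\Q)\to H^1(Z';\Q)$ is injective.

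To settle this smooth case I would cut $Z'$ down to a subvariety finite over $Y'$. Let $W\subseteq Z'$ be cut out by a general complete intersection of dimension $\dim Y'$; then $W\to Y'$ is dominant and generically finite. Over a suitable dense open $U\subseteq Y'$, and after shrinking $W$ to a dense open of a component dominating $Y'$ and resolving its singularities, one obtains a smooth connected variety $V$, a finite morphism $V\to U$ — which is flat, by ``miracle flatness'', since $V$ is smooth, $U$ is regular, and $\dim V=\dim U$ — and a morphism $V\to Z'$ over $Y'$. The trace (transfer) map for the finite flat morphism $V\to U$ shows $H^1(U;\Q)\to H^1(V;\Q)$ is injective; and $H^1(Y';\Q)\to H^1(U;\Q)$ is injective by the unibranch argument above, since $Y'$ is smooth. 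The injective composite $H^1(Y';\Q)\to H^1(U;\Q)\to H^1(V;\Q)$ factors as $H^1(Y';\Q)\xrightarrow{g^\ast}H^1(Z';\Q)\to H^1(V;\Q)$, so $g^\ast$ is injective, which completes the argument.

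The step I expect to be the main obstacle is the first of the two facts in the reduction: that $\ker\big(H^1(Y;\Q)\to H^1(\widetilde{Y};\Q)\big)$ is exactly $W_0H^1(Y;\Q)$ (and, more generally, that pullback along a proper surjective morphism is injective on $\Gr^W_j$ for $j\ge 1$). This is where one genuinely uses the internal structure of Deligne's mixed Hodge theory — the description of $\Gr^W_1H^1$ through the Albanese variety of a smooth compactification, which is covariant and surjective for dominant morphisms, and of $\Gr^W_2H^1$ through the group of regular units, which injects under pullback by a dominant morphism. Granting this, everything else is routine bookkeeping with resolutions, transfers, and the elementary $\pi_1$-surjectivity lemmas already established in the paper.
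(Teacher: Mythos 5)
Your proposal is correct in outline, and it reorganizes the argument in a genuinely different way from the paper. The paper replaces $Z\to Y$ by a proper surjective morphism from a smooth variety, completes a compactification of it to a smooth proper hypercovering of $(\overline{Y},\overline{Y}-Y)$, and reads off $H^1(Y)/W_0H^1(Y)=\ker\{H^1(M_0)\to H^1(M_1)\}$ from the descent spectral sequence, so that the edge map $H^1(Y)\to H^1(M_0)=H^1(Z)$ has kernel exactly $E_\infty^{1,0}=W_0H^1(Y)$ in one stroke. You instead factor through a resolution $\pi:\widetilde{Y}\to Y$ and split the work into (a) $\ker\pi^\ast=W_0H^1(Y)$ and (b) injectivity of $H^1$ under a dominant morphism of smooth varieties; your transfer argument for (b) is correct (and could be shortened by quoting Proposition~\ref{prop:normalization}/Corollary~\ref{cor:reduction} together with the group-theoretic transfer for a finite-index subgroup of $\pi_1$, avoiding the complete-intersection and miracle-flatness detour). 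What this buys is that the mixed Hodge theory is confined to the single statement (a), with everything else elementary.

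The one point you should tighten is your justification of (a). The Mayer--Vietoris sequence of the abstract blow-up square $(E\subset\widetilde{Y})\to(S\subset Y)$ only shows that $\ker\bigl(H^1(Y)\to H^1(\widetilde{Y})\oplus H^1(S)\bigr)$ is a quotient of $H^0(E)$ and hence lies in $W_0$; a class killed by $\pi^\ast$ may still restrict nontrivially to $H^1(S)$, so this does not yet bound $\ker\pi^\ast$. To close this you either need an induction on dimension (using that $E\to S$ is proper surjective to control $\alpha|_S$), or you should invoke the spectral sequence of a full smooth proper hypercovering $Y_\dot\to Y$ with $Y_0=\widetilde{Y}$, for which $\ker\{H^1(Y)\to H^1(Y_0)\}=E_\infty^{1,0}$ is a subquotient of $H^0(Y_1)$, pure of weight $0$. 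Note that this last statement is precisely the engine of the paper's proof, so your argument is not independent of it --- it relocates the same descent input from a compactified model of $f$ to the resolution of $Y$. The Albanese/units description of $\Gr^W_1$ and $\Gr^W_2$ that you mention as an alternative is a workable substitute, but it too requires care for singular, non-proper $Y$ and is not shorter than the hypercovering argument.
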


Proposition~\ref{prop:abelian} is an immediate consequence. Since $H_1(X;\Z)$,
$H_1(Y;\Z)$ and $H_1(Z;\Z)$ are finitely generated abelian groups, it suffices
to prove that the images of $H_1(Z;\Q)$ and $H_1(Y;\Q)$ in $H_1(X;\Q)$ are
equal. We will prove the dual assertion; namely:
$$
\ker\{h^\ast : H^1(X) \to H^1(Y)\} = \ker\{f^\ast h^\ast : H^1(X) \to H^1(Z)\}.
$$
To prove this, consider the diagram
$$
\xymatrix{
0 \ar[r] & W_0 H^1(Y) \ar[r] & H^1(Y) \ar[r]^{f^\ast} & H^1(Z) \cr
&& H^1(X) \ar[u]^{h^\ast} \ar[ur]
}
$$
Since $X$ is smooth, $W_0 H^1(X)=0$. Strictness and the exactness of the
top row of the diagram imply that
$$
\im h^\ast \cap \ker f^\ast = \im h^\ast \cap W_0 H^1(Y) = \im W_0 H^1(Z) = 0.
$$
This implies that $\im f^\ast = \im(f^\ast h^\ast)$ as required.

\begin{proof}[Sketch of Proof of Proposition~\ref{prop:technical}]
This proof requires familiarity with Deligne's construction \cite{deligne:hodge}
of the mixed Hodge structure (MHS) on the cohomology of a general complex
algebraic variety. The first step is to observe that we may assume that $Z\to Y$
is proper and surjective. If not, one can find a smooth variety $Z'$ that
contains $Z$ as a Zariski open subset and a proper surjective morphism $h' :
Z'\to Y$ that extends $h$. Since $Z'$ is smooth, standard arguments imply that
$H^1(Z')\to H^1(Z)$ is injective.

Suppose that $h : Z \to Y$ is surjective. Fix a completion $\Ybar$ of $Y$. Then
there exists a smooth completion $\Zbar$ of $Z$ such that $\Zbar-Z$ is a normal
crossings divisor $D$ in $\Zbar$, and a morphism $\hbar : \Zbar \to \Ybar$ that
extends $h$.

Set $Y_0 = \Ybar - Y$. Then $(\Zbar,D) \to (\Ybar,Y_0)$ can be completed to a
hypercovering $(\Mbar_\dot,D_\dot)$ of $(\Ybar,Y_0)$ where
\begin{enumerate}

\item each $\Mbar_n$ is
smooth and proper,

\item  $D_n$ is a normal crossings divisor in $\Mbar_n$,

\item $(M_0,D_0) = (\Zbar,D)$.

\end{enumerate}
Set $M_n = \Mbar_n - D_n$.
The spectral sequence of the associated simplicial variety satisfies
$$
E_1^{s,t} = H^t(M_s) \implies H^{s+t}(Y).
$$
Since it is a spectral sequence in the category of MHS
$$
H^1(Y)/W_0 = \ker\{H^1(M_0) \overset{d_1}{\longrightarrow} H^1(M_1)\}.
$$
The result follows as $Z=M_0$.
\end{proof}

\end{document}